\newcommand{\COLORON}{0}
\newcommand{\NOTESON}{0}
\newcommand{\Debug}{0}
\newcommand{\comment}[1]{}
\newcommand{\COMMENT}[1]{}
\definecolor{darkgray}{rgb}{0.3,0.3,0.3}
\newcommand{\defi}[1]{{\color{darkgray}\emph{#1}}}
\newcommand{\acknowledgement}{\section*{Acknowledgement}}
\newtheorem{proposition}{Proposition}[section]
\newtheorem{definition}[proposition]{Definition}
\newtheorem{theorem}[proposition]{Theorem}
\newtheorem{corollary}[proposition]{Corollary}
\newtheorem{lemma}[proposition]{Lemma}
\newtheorem{observation}[proposition]{Observation}
\newtheorem{conjecture}{{Conjecture}}[section]
\newtheorem{problem}[conjecture]{{Problem}}
\newtheorem{examp}[proposition]{Example}
\newcommand{\FIG}{0}
\newcommand{\note}[1]{ 

\hspace*{-30pt}
	{\color{blue}  NOTE: \color{Turquoise}{\small  \tt \begin{minipage}[c]{1.1\textwidth}  #1 \end{minipage} \ignorespacesafterend }} 
	
	}
\else \newcommand{\note}[1]{} \fi
\newcommand{\afsubm}[1]{ \ifnum \Debug = 1 {\mymargin{#1}}
\fi} 
\newcommand{\sss}{\ensuremath{\color{red} \bowtie \bowtie \bowtie\ }}
\else \newcommand{\sss}{} \fi
\newcommand{\fig}[1]{Figure ``{#1}''}
\else \newcommand{\fig}[1]{Figure~\ref{#1}} \fi
\renewcommand{\color}[1]{}
\newcommand{\showFig}[2]{
   \begin{figure}[htbp]
   \centering
   \noindent
   \epsfbox{#1.eps}
   \caption{\small #2}
   \label{#1}
   \end{figure}
}
\newcommand{\N}{\ensuremath{\mathbb N}}
\newcommand{\R}{\ensuremath{\mathbb R}}
\newcommand{\Z}{\ensuremath{\mathbb Z}}
\newcommand{\ca}{\ensuremath{\mathcal A}}
\newcommand{\cc}{\ensuremath{\mathcal C}}
\newcommand{\ci}{\ensuremath{\mathcal I}}
\newcommand{\cn}{\ensuremath{\mathcal N}}
\newcommand{\co}{\ensuremath{\mathcal O}}
\newcommand{\cp}{\ensuremath{\mathcal P}}
\newcommand{\cs}{\ensuremath{\mathcal S}}
\newcommand{\cx}{\ensuremath{\mathcal X}}
\newcommand{\cw}{\ensuremath{\mathcal W}}
\newcommand{\gam}{\ensuremath{\gamma}}
\newcommand{\Gam}{\ensuremath{\Gamma}}
\newcommand{\del}{\ensuremath{\delta}}
\newcommand{\eps}{\ensuremath{\epsilon}}
\newcommand{\lam}{\ensuremath{\lambda}}
\newcommand{\sig}{\ensuremath{\sigma}}
\newcommand{\ccc}{\ensuremath{\mathcal C}}
\newcommand{\sm}{\backslash}
\newcommand{\restr}{\upharpoonright}
\newcommand{\sydi}{\triangle}
\newcommand{\cls}[1]{\overline{#1}}
\newcommand{\iin}{\ensuremath{{i\in\N}}}
\newcommand{\unin}{\ensuremath{[0,1]}}
\newcommand{\sgl}[1]{\ensuremath{\{#1\}}}
\newcommand{\pth}[2]{\ensuremath{#1}\text{--}\ensuremath{#2}~path}
\newcommand{\pths}[2]{\ensuremath{#1}\text{--}\ensuremath{#2}~paths}
\newcommand{\seq}[1]{\ensuremath{(#1_n)_{n\in\N}}} 
\newcommand{\g}{\ensuremath{G\ }}
\newcommand{\G}{\ensuremath{G}}
\newcommand{\knl}{Kirchhoff's node law}
\newcommand{\kcl}{Kirchhoff's cycle law}
\newcommand{\are}{\vec{e}}
\newcommand{\arE}{\vec{E}}
\newcommand{\Lr}[1]{Lemma~\ref{#1}}
\newcommand{\Lrs}[1]{Lemmas~\ref{#1}}
\newcommand{\Tr}[1]{Theorem~\ref{#1}}
\newcommand{\Sr}[1]{Section~\ref{#1}}
\newcommand{\Srs}[1]{Sections~\ref{#1}}
\newcommand{\Cr}[1]{Corollary~\ref{#1}}
\newcommand{\Cnr}[1]{Con\-jecture~\ref{#1}}
\newcommand{\Or}[1]{Observation~\ref{#1}}
\newcommand{\Dr}[1]{De\-fi\-nition~\ref{#1}}
\newcommand{\lfg}{locally finite graph}
\newcommand{\Cg}{Cayley graph}
\renewcommand{\iff}{if and only if}
\newcommand{\fe}{for every}
\newcommand{\Fe}{For every}
\newcommand{\st}{such that}
\newcommand{\ti}{there is}
\newcommand{\ta}{there are}
\newcommand{\obda}{without loss of generality}
\newcommand{\wrt}{with respect to}
\newcommand{\leth}{large enough that}
\newcommand{\rw}{random walk}
\newcommand{\labequ}[2]{ \begin{equation} \label{#1} #2 \end{equation} } 
\newcommand{\labtequ}[2]{
 \begin{equation} \label{#1} 	\begin{minipage}[c]{0.9\textwidth}  #2 \end{minipage} \ignorespacesafterend \end{equation} } 
\newcommand{\labtequc}[2]{ \begin{equation} \label{#1} 	\text{   #2 } \end{equation} } 
\newcommand{\labtequstar}[1]{ \begin{equation*}  	\begin{minipage}[c]{0.9\textwidth}  #1 \end{minipage} \ignorespacesafterend \end{equation*} }
\newcommand{\mymargin}[1]{
  \marginpar{%
    \begin{minipage}{\marginparwidth}\small%
      \begin{flushleft}%
        {\color{blue}#1}%
      \end{flushleft}%
   \end{minipage}%
  }%
}%
\newcommand{\mySection}[2]{}
\newcommand{\stoco}{stochastically convergent}
\newcommand{\prj}[1]{\ensuremath{\overline{#1}}}
\newcommand{\squt}{square tiling}
\newcommand{\ded}[1]{\ensuremath{\overrightarrow{#1}}}
\newcommand{\PBv}{Poisson boundary}
\newcommand{\Lzol}{Levy's zero-one law}
\newcommand{\BCl}{Borel-Cantelli lemma}
\newcommand{\shf}{sharp harmonic function}
\newcommand{\ExI}[1]{\ensuremath{\mathbb E\left[#1\right]}}
\newcommand{\PrI}[1]{\ensuremath{\mathbb{P}\left[#1\right]}}
\newcommand{\PrII}[2]{\ensuremath{\mathbb P_{#1}\left[#2\right]}}
\newcommand{\Puz}{\ensuremath{P^c_\sswarrow}}
\newcommand{\hybg}{\ensuremath{\partial_h(G)}}
\newcommand{\cbg}{\ensuremath{\partial_\sim (G)}}
\newcommand{\cgbg}{\ensuremath{\partial_{\cong} (G)}}
\newcommand{\vapf}{accumulation-free}
\newcommand{\clos}[1]{\overline{#1}}
\newcommand{\as}{almost surely}
\title{The Boundary of a Square Tiling of a Graph coincides with the Poisson Boundary}
\author{Agelos Georgakopoulos\thanks{Partly supported by FWF Grant P-24028-N18.}
\medskip 
\\
  {Mathematics Institute}\\
 {University of Warwick}\\
  {CV4 7AL, UK}\\}
\begin{document}
\maketitle

\begin{abstract}
Answering a question of Benjamini \& Schramm \cite{BeSchrST}, we show that the Poisson boundary of any planar, uniquely absorbing (e.g.\ one-ended and transient) graph with bounded degrees can be realised geometrically as a circle, namely as the boundary of a tiling of a cylinder by squares. This implies a conjecture of Northshield \cite{NorCir} of similar flavour. For our proof we introduce a general criterion for identifying the Poisson boundary of a stochastic process that might have further applications.
\end{abstract}

\section{Introduction}

\subsection{Overview} \label{over}

In this paper we prove the following fact, conjectured by Benjamini \& Schramm \cite[Question~7.4.]{BeSchrST} 

\begin{theorem} \label{main}
If \g is a plane, uniquely absorbing\footnote{See \Sr{prelim} for definitions.} graph with bounded degrees, then the boundary of its square tiling is a realisation of its Poisson boundary.
\end{theorem}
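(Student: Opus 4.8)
The plan is to pass from the combinatorics of \g to the geometry of the cylinder $C$ that carries the square tiling, to prove that the random walk converges almost surely to the boundary circle $\partial C$, and then to feed this convergence into a general criterion for a boundary to be the \emph{full} Poisson boundary --- that criterion being the part of possibly independent interest.

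First I would set up the tiling. Since \g is plane, has bounded degrees and is uniquely absorbing, fix a base vertex $o$ and let $\varphi\colon V(G)\to[0,1]$ be the potential of the unit current flowing from $o$ to infinity: a function harmonic off $o$, normalised by $\varphi(o)=1$, tending to $0$ at infinity, whose existence and uniqueness is exactly what unique absorption provides. The Brooks--Smith--Stone--Tutte / Benjamini--Schramm construction then realises \g as a tiling of $C=(\R/\Z)\times(0,1]$ by axis-parallel squares $\{Q_e:e\in E(G)\}$, where $Q_e$ has side length the current through $e$, vertical position determined by $\varphi$ on the ends of $e$, and angular position determined by the rotation system of the embedding; each vertex $v$ becomes a horizontal slit at height $\varphi(v)$, and ``infinity'' is the circle $S^1:=(\R/\Z)\times\{0\}=\partial C$. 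Transience gives that $\sum_e(\text{current through }e)^2$ equals the effective resistance from $o$ to infinity and so is finite, so the squares are summably small and accumulate only on $S^1$.

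Next I would use the convergence theorem of Benjamini and Schramm: from any start, $X_n$ converges almost surely to a point $\xi=\xi(X_\bullet)\in S^1$ --- past the last visit to $o$ the height $\varphi(X_n)$ is a bounded martingale, hence convergent, and by transience it converges to $0$, after which the angular coordinate converges too. This yields hitting distributions $\omega_x$ on $S^1$ with $x\mapsto\omega_x(A)$ harmonic, a common measure class $[\omega]$, and an injective Poisson transform $P\colon L^\infty(S^1,[\omega])\to\{\text{bounded harmonic functions}\}$, $P[f](x)=\int_{S^1}f\,d\omega_x$, injective because $P[f](X_n)=\Ex_x[f(\xi)\mid\mathcal F_n]\to f(\xi)$ a.s. So $(S^1,[\omega])$ is a boundary of \g, and the content of the theorem is that it is \emph{maximal}: $P$ is onto, equivalently $\sigma(\xi)$ exhausts the invariant $\sigma$-algebra of the walk.

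For maximality I would isolate the following general fact: \emph{if a Markov chain converges a.s.\ to a point $\xi$ of a boundary $B$, then $(B,[\omega])$ is the Poisson boundary as soon as, for $[\omega]$-a.e.\ $b$, the $b$-conditioned (Doob-transformed) chain has only constant bounded harmonic functions}; the reduction works because for bounded harmonic $h$ the map $x\mapsto\Ex^b_x[\lim_n h(X_n)]$ is bounded harmonic for the $b$-conditioned chain, hence constant once that chain has trivial boundary, and then $h$ is $P$ of the resulting function of $b$. It then remains to verify the hypothesis for $B=S^1$, and this is the main obstacle: one shows that, conditioned on $\xi=b$, the walk is eventually trapped in arbitrarily thin tubes of $C$ around $b$ --- after its last visit to height $\ge t$ it stays in the sub-cylinder of height $<t$, whose area tends to $0$, while planarity forbids the conditioned walk from winding around $C$, so its angular range also shrinks --- so the conditioned chain is, away from a finite part, a chain on a half-infinite graph of bounded width, with vanishing entropy and hence trivial Poisson boundary. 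The delicate points are controlling the winding around $C$ and the upward excursions of $\varphi(X_n)$ (tame only in a martingale, not monotone, sense) and extracting triviality of the boundary; it is precisely here that planarity, bounded degree, and the fact that squares shrink towards $S^1$ are used.
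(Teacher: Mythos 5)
Your overall architecture --- build the tiling, prove a.s.\ convergence of the walk to the boundary circle, then invoke a general maximality criterion --- matches the paper's, and your chosen criterion (maximality follows once the Doob $b$-conditioned chains have trivial bounded harmonic functions for a.e.\ $b$) is a legitimate alternative to the criterion the paper actually proves (Theorem~\ref{thPB}: it suffices to be faithful to every \emph{sharp} harmonic function, i.e.\ every bounded harmonic function whose values along the walk converge to $0$ or $1$). But the place where you verify the hypothesis of your criterion is exactly where the entire difficulty of the theorem lives, and your sketch of that verification does not go through as stated. Three specific problems. First, ``planarity forbids the conditioned walk from winding around $C$, so its angular range also shrinks'' is not a soft consequence of planarity: even for the \emph{unconditioned} walk, convergence of the angular coordinate requires the quantitative energy/effective-conductance argument of Section~\ref{convBou} (finiteness of the expected number of alternations between two meridian rays, which is where bounded degree enters); for the conditioned walk the analogous control is an additional, unproved claim. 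Second, the preimage in $G$ of a thin sub-cylinder around $b$ is not ``a half-infinite graph of bounded width'' in any graph-theoretic sense --- the squares there are small but may be arbitrarily numerous at each level, so the conditioned chain is not reduced to anything resembling a one-dimensional chain. Third, ``vanishing entropy and hence trivial Poisson boundary'' invokes the entropy criterion, which is a theorem about random walks on groups; for a general Markov chain (and the Doob transform of a planar graph walk is certainly not a group walk) no such criterion is available --- indeed the paper emphasizes that the absence of general non-group criteria is part of its motivation.

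By contrast, the paper's proof of maximality never conditions the walk. It shows (Lemma~\ref{lsydi}) that for every sharp harmonic function $s$ the level sets $F_n=\{b\in B_n: s(b)>1/2\}$ on the parallel circles of the tiling stabilize in width, $\lim_{m,n} w(\overline{F_n}\,\sydi\,\overline{F_m})=0$, via a flow-compensation argument: the identification of widths with exit distributions (Lemma~\ref{wn}), the zero-net-flux property across meridians (Lemma~\ref{mer}), and a barrier/cluster decomposition of $B_m$ show that a failure of stabilization would force an uncompensated flow across some meridian. This is the content your proposal would need to replace; as written, the replacement is asserted rather than proved.
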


The main implication of this is that every such graph \g can be embedded inside the unit disc ${\mathbb D}$ of the real plane in such a way that \rw\ on \g converges to $\partial {\mathbb D}$ almost surely, its exit distribution coincides with Lebesgue measure on $\partial {\mathbb D}$, and \ti\ a one-to-one correspondence between the bounded harmonic functions on \g and $L^\infty(\partial {\mathbb D})$ (the innovation of \Tr{main} is the last sentence).

Knowing that a graph \g is not uniquely absorbing provides a lot of information about its Poisson boundary; in particular, $G$ is not Liouville. Thus it is not a significant restriction in our context to assume that \g is uniquely absorbing.

\medskip
We use \Tr{main} to prove a conjecture of Northshield \cite{NorCir} of similar flavour (\Cnr{Ncon}), thus obtaining a further geometric realisation of the Poisson boundary of the graphs in question.

In the case where \g is hyperbolic, we can say a bit more:

\begin{corollary}\label{cor}
Let \g be an infinite, Gromov-hyperbolic,  non-amenable, 1-ended, plane graph with bounded degrees and no infinite faces. Then the following five boundaries of \G\ (and the corresponding compactifications of \G) are canonically homeomorphic to each other: the hyperbolic boundary, the Martin boundary, the boundary of the square tiling, the Northshield circle, and the boundary \cgbg. 
\end{corollary}
 
We present examples showing that all these conditions are necessary to make the statement true and none of them is implied by the others, except that it is not clear whether both the bounded degree and the non-amenability conditions are necessary; see Problems~\ref{Pram} and \ref{Prhyp}. The least obvious case is to show that hyperbolicity is not implied by the other properties, contrary to another conjecture of Northshield \cite{NorCir}; we disprove that conjecture by a counterexample in \Sr{secHyp}.

The equivalence of the Martin boundary to the hyperbolic boundary in \Cr{cor} follows from a result of Ancona \cite{AncNeg,Anc} that can be thought of as a discrete version of the theorem of Anderson \& Schoen \cite{AnSchPos} that for any complete, simply connected Riemannian manifold with bounded and negative sectional curvatures, the Martin boundary with respect to the Laplace-Beltrami operator coincides with the geometric boundary. This motivates

\begin{conjecture}
Let $M$ be a complete, simply connected Riemannian surface with Gaussian curvatures bounded between two negative constants. Let $f: M \to \mathbb{D}$ be a conformal map from $M$ to the open unit disc in $\mathbb{C}$. Then for every  1-way infinite geodesic \gam\ in $M$, the image $f(\gam)$ converges to a point in the boundary $\mathbb{S}^1$ of $\mathbb{D}$, and this convergence determines a homeomorphism from the sphere at infinity of $M$ to $\mathbb{S}^1$. (In particular, images of equivalent geodesics converge to the same point of $\mathbb{S}^1$.)
\end{conjecture}

\medskip
For the proof of \Tr{main} the following criterion for identifying the Poisson boundary of a general Markov chain is introduced, that might have further applications. A function $h: V\to [0,1]$ is \defi{sharp}, if its values along the trajectory of the Markov chain converge to 0 or 1 almost surely.

\begin{theorem}\label{thPB}
Let $M$ be an irreducible Markov chain and $\cn$ an $M$-boundary. Then \cn\ is a realisation of the Poisson boundary of $G$ \iff\ it is faithful to every sharp harmonic function.

\end{theorem}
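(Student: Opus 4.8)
The plan is to translate ``\cn\ is a realisation of the Poisson boundary'' into an equality of $\sigma$-algebras, and then to verify the two inclusions. Write $\Omega$ for the trajectory space of $M$, $\mathcal F_n:=\sigma(X_0,\dots,X_n)$, and $\mathcal I$ for the $\Pr_x$-completed shift-invariant $\sigma$-algebra on $\Omega$; recall that the Poisson boundary of $M$ is, up to isomorphism of measure spaces, the point-realisation of $(\Omega,\mathcal I)$, and that an $M$-boundary \cn\ carries an exit map $\beta\colon\Omega\to\cn$, $\omega\mapsto\lim_n X_n(\omega)$, with $\beta\circ(\text{shift})=\beta$ a.s., hitting measures $\nu_x:=\beta_*\Pr_x$, and $\sigma(\beta)\subseteq\mathcal I$ mod null. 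Irreducibility makes all the $\Pr_x$ (hence all the $\nu_x$) mutually absolutely continuous, so ``mod null'' is unambiguous. Under these identifications, \cn\ is a realisation of the Poisson boundary precisely when $\sigma(\beta)=\mathcal I$ mod null, and ``\cn\ is faithful to $h$'' means that the a.s.\ limit $h_\infty:=\lim_n h(X_n)$ (which exists for bounded harmonic $h$ by the martingale convergence theorem) is $\sigma(\beta)$-measurable mod null --- equivalently, that $h(x)=\int_\cn f\,d\nu_x$ for some bounded measurable $f$ on \cn. So the theorem becomes: $\sigma(\beta)=\mathcal I$ mod null \iff\ $h_\infty$ is $\sigma(\beta)$-measurable for every sharp $h$.

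For the forward direction, assume $\sigma(\beta)=\mathcal I$ mod null. For \emph{any} bounded harmonic $h$, the limit $h_\infty$ is a bounded, measurable, shift-invariant random variable (since $h_\infty\circ(\text{shift})=h_\infty$), hence $\mathcal I$-measurable mod null, hence $\sigma(\beta)$-measurable; in particular \cn\ is faithful to every sharp $h$. (Note this half uses nothing about sharpness.)

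For the converse, the key point is that each invariant event is ``witnessed'' by a sharp harmonic function. Given $B\in\mathcal I$, set $h_B(x):=\Pr_x[B]$. Using the Markov property together with $\text{shift}^{-1}(B)=B$ one checks that $h_B$ is harmonic and that $\Pr_x[B\mid\mathcal F_n]=\Pr_{X_n}[B]=h_B(X_n)$; by \Lzol\ the left-hand side converges a.s.\ to $\mathbb{1}_B$, so $h_B(X_n)\to\mathbb{1}_B\in\{0,1\}$ a.s., i.e.\ $h_B$ is sharp. By hypothesis \cn\ is faithful to $h_B$, so $\mathbb{1}_B=(h_B)_\infty$ is $\sigma(\beta)$-measurable mod null, that is $B\in\sigma(\beta)$ mod null. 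Since $B\in\mathcal I$ was arbitrary, $\mathcal I\subseteq\sigma(\beta)$ mod null; combined with the reverse inclusion this gives $\sigma(\beta)=\mathcal I$ mod null, so \cn\ realises the Poisson boundary. To land on whichever concrete form of ``realisation'' the paper uses --- e.g.\ that $f\mapsto\bigl(x\mapsto\int_\cn f\,d\nu_x\bigr)$ is a bijection from $L^\infty(\cn)$ onto the bounded harmonic functions of $M$ --- one upgrades the equality of $\sigma$-algebras to an isomorphism of the measure spaces \cn\ and $(\Omega,\mathcal I)$ (legitimate since $\Omega=V^{\mathbb N}$ is standard Borel and $\mathcal I$ is therefore handled by the usual measure-isomorphism theorems) and then transports the Poisson representation formula across it.

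The main obstacle is bookkeeping rather than a deep idea: one must pin down the paper's exact definitions of ``$M$-boundary'', ``faithful'', and ``realisation'', and be careful with the measure theory --- everything is ``mod $\nu_x$-null for every $x$''; one needs $\Omega$ standard Borel (so that matching $\sigma$-algebras yields matching boundaries); and one needs the Poisson formula to pass from the $0$--$1$-valued (sharp) data back to all bounded harmonic functions. The only genuinely probabilistic input is \Lzol, applied once to each invariant event.
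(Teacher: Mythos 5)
Your proposal is correct in outline, but it takes a genuinely different route from the paper's. You collapse both directions into the single identity $\sigma(f)=\mathcal I$ mod null, where $\mathcal I$ is the invariant $\sigma$-algebra on path space, and then lean on the standard correspondence between bounded harmonic functions and $L^\infty(\cw,\mathcal I)$ ($h\mapsto\lim_n h(Z^n)$ one way, $B\mapsto\mu_{\cdot}(B)$ the other, glued by \Lzol), finishing with the factorization lemma to write $\lim_n h(Z^n)=\hat h\circ f$ and hence $h(z)=\int\hat h\,d\nu_z$. The paper never passes through $\mathcal I$: it works directly with its integral-representation definition of ``realisation'', and in the hard direction constructs $\hat h$ by hand as a Radon--Nikodym derivative $\partial H_z/\partial\nu_z$, where $H_z$ is assembled from the sets $X\restr_{[a,b)}$ that faithfulness provides for the sharp functions $h_{[a,b)}$ of \Lr{lab}, verifying the representation formula by discretizing the range of $h$ (\Lr{refine} and Claims 1--2). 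Your key step in the converse --- every invariant event $B$ yields a sharp function $z\mapsto\mu_z(B)$, so faithfulness forces $B\in\sigma(f)$ mod null --- is the same germ as the paper's \Lr{sharph}, but you exploit it globally to get $\mathcal I\subseteq\sigma(f)$ at once, whereas the paper applies it only to the level-set events of the particular $h$ being represented. What your route buys is brevity, and a forward direction that is nearly automatic (the paper spends a separate argument, via the sets $X_\eps$ and \Lzol, to show $\hat s$ is $\{0,1\}$-valued, which in your setup falls out of $\hat s\circ f=\mathbb 1_{1^s}$ a.s.); what it costs is importing the identification of the Poisson boundary with $(\cw,\mathcal I)$ as a known fact, which the paper deliberately avoids presupposing given its agnostic stance on what ``the'' Poisson boundary is. Two small points: the standard-Borel and measure-isomorphism machinery you invoke is unnecessary, since the factorization of a real-valued $\sigma(f)$-measurable function through $f$ holds for an arbitrary target measurable space; and you should still record uniqueness of $\hat h$, which the paper proves via \Cr{close1} but which in your framework is one line --- if $\int\hat k\,d\nu_z=0$ for all $z$, then the zero harmonic function $z\mapsto\int\hat k\,d\nu_z$ has a.s.\ trajectory limit $\hat k\circ f$ by \Lzol, so $\hat k=0$ $\nu$-a.e.\ by measure preservation.
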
 

Loosely speaking, \Tr{thPB} states that in order to check that a candidate space  is a realisation of the Poisson boundary, it suffices to consider its behaviour \wrt\ the \shf s rather than all bounded harmonic functions. This fact, and its proof, can be generalised to many continuous stochastic processes.
We will have a closer look at \Tr{thPB} and its implications in \Sr{our}, after some background information.

\subsection{Square tilings ---a discrete analogue of conformal uniformization}

Motivated by Dehn's problem of dissecting a rectangle into squares of distinct side lengths \cite{Dehn}, Brooks et.\ al.\ \cite{BSST}  showed how any finite planar graph \g can be associated with a tiling of a rectangle by squares in such a way that every edge $e$ of \g corresponds to a square $R_e$ in the tiling and every vertex $x$ corresponds to an interval tangent with all squares corresponding to the edges of $x$. The construction was made using an electrical current on the graph, and the square $R_e$ is given side length equal to the current going through $e$, while the position of $R_e$ is determined by the voltages of the vertices (see \Sr{constr} for details).  This construction has become a classic, with a lot of applications 
ranging from recreational mathematics to statistical mechanics; see e.g.\ \cite{reseaux,DujSim,kenyon_tilings_1998,kenyon_dimers_2004}.

Benjamini and Schramm \cite{BeSchrST} showed that the construction of \cite{BSST} can be applied to an infinite planar graph \g in the uniquely absorbing case; the electrical current now emanates from a single vertex $o$ and escapes to infinity, and is intimately connected to the behaviour of \rw\ from $o$. The square tiling takes place on the cylinder $K=\R/\Z \times [0,1]$, the edges of the graph being mapped to disjoint squares tiling $K$; see \fig{treest} or \cite[FIG. 1]{BeSchrST} for examples.
Their motivation was to find discrete analogues of Riemann's mapping theorem; the following quote is from  \cite{BeSchrST}

{\quote \small ``The tiling plays the same role as conformal uniformization does for planar domains. In fact, the proof of its existence illustrates parallels with the continuous theory. In a way, it is a discrete analogue of Riemann's mapping theorem.''}
\medskip

For more on the relationship between square tilings and Riemann's mapping theorem see \cite{CaFlPaSqu}. The paper \cite{BeSchrHar} is similar in nature to \cite{BeSchrST} and provides a further discrete analogue of Riemann's mapping theorem. A well known corollary of both \cite{BeSchrHar,BeSchrST} is that every planar transient graph admits non-constant harmonic functions of finite energy. This result, in fact a detailed description of the space of such functions, can now be derived combining \Tr{main} with the results of \cite{theta}.

\showFig{treest}{The infinite binary tree and its square tiling.}

\medskip
The quest for discrete occurrences of conformal invariance, either as discretised analogues of the continuous theory, or as original constructions, is imminent in much of the work of Schramm and was followed up by many others, leading to several impressive applications; see \cite{RohOded} for a survey.

\subsection{The Poisson boundary of a stochastic process}

The Poisson(-Furstenberg) boundary of a transient stochastic process $M$ is a measure space $\cp$ associated with $M$ \st\ every bounded harmonic function on the state space of $M$ can be represented by an integral on $\cp$ and, conversely, every function in $L^\infty(\cp)$ can be integrated into a bounded harmonic function of $M$. The standard example \cite{FurPoi,KaiPoHypAnn} is when $M$ denotes Brownian motion on the open unit disc $\mathbb D$ in the complex plane; then the classical Poisson integral representation formula $h(z) = \int_0^1 \frac{1-|z|^2}{|e^{2\pi i\theta} - z|^2}\hat{h}(\theta)d\theta =   \int_0^1 \hat{h}(\theta) d\nu_z(\theta)$ recovers every continuous harmonic function $h$ from its boundary values $\hat{h}$ on the circle $\partial \mathbb D$. Here, $\nu_z$ is the \defi{harmonic measure} on $\partial \mathbb D$, i.e.\ the exit distribution of Brownian motion started at $z$, and can be obtained by multiplying the Lebesgue measure by the Poisson kernel $\frac{1-|z|^2}{|e^{2\pi i\theta} - z|^2}$. Thus, we can identify the \PBv\ of Brownian motion $M$ on $\mathbb D$ with $\partial \mathbb D$, endowed with the family of measures $\nu_z, z\in \mathbb D$. 

In this example, the boundary was geometric and obvious. If $M$ is now an arbitrary transient Markov chain, then \ti\ an abstract construction of a measurable space $\cp$, called the \PBv\ of $M$, endowed with a family of measures $\nu_z$ indexed by the state space $V$ of $M$, such that the formula $h(z) = \int_\cp \hat{h} d\nu_z$ provides an isometry between the Banach space $H^\infty(M)$ of bounded harmonic functions of $M$ (endowed with the supremum norm) and the space $L^\infty(\cp)$.

Triggered by the work of Furstenberg \cite{FurPoi,FurRan}, a lot of research has concentrated on identifying the \PBv\ of various Markov chains, most prominently locally compact groups endowed with some measure; see \cite{AncNeg,kaimanovich_poisson_1996,KaVeRan,karlsson_poisson_2006} just to mention some examples, and \cite{erschlerICM} for a survey including many references and some impressive applications. However, I would like to stress that this paper is not about groups (although some of its techniques might be applicable to them), but rather about extending the study of the \PBv\ to embrace general graphs, a currently emerging quest \cite{AnBaGuNa,theta}.

In general, one would like to identify the \PBv\ of a given Markov chain with a geometric object as we did for $\mathbb D$, which could for example be a compactification of a \Cg\ on which our group acts. This task can however be very hard. Some general criteria have been developed for the case of groups, mostly by Kaimanovich \cite{KaiPoHypAnn,KaVeRan}, that have helped to identify `geometric' Poisson boundaries for certain classes of groups 
 e.g.\ hyperbolic groups, but others, like the lamplighter group over $Z^3$ \cite{erschler_poissonfurstenberg_2011}, defy such identification despite extensive efforts. A well-known open problem is whether the Liouville property, which can be expressed as triviality of the \PBv,  for simple random walk on a \Cg\ $G= Cay(\Gamma,S)$ is a group invariant, i.e.\ independent of the choice of the generating set $S$ of a given group $\Gamma$.

The aforementioned general criteria for geometric identification of the \PBv\ only apply to groups. \Tr{thPB}, and its corollary \Cr{cortop} below, may be the first general criterion for arbitrary Markov chains, while still being applicable in the case of groups.

\subsection{More on our results} \label{our}
There are various definitions of the Poisson boundary for general Markov chains in the literature. In this paper, rather than choosing one of them, or introducing our own, we will follow a more flexible approach, accepting any measure space that fulfils the properties expected from ``the'' \PBv\ as ``a'' \PBv. More precisely, given a transient Markov process $M$, we let an \defi{$M$-boundary}, or a \defi{$G$-boundary} if $M$ happens to be \rw\ on a graph \G, be any measurable space $\cn$ endowed with a family of measures $(\nu_z), z\in V$ (recall our $\mathbb D$ example) and a measurable, measure preserving, shift-invariant function $f$ from the space of \rw\ trajectories $\cw$ to $\cn$; see \Sr{secSh} for details. This definition is rather standard \cite{KaiPoHypAnn}. We say that an $M$-boundary $\cn$ is a \defi{realisation} of the Poisson boundary of $M$, if every bounded harmonic function $h$ on the state space $V$ of $M$ can be obtained by integration of a bounded function $\hat{h}\in L^\infty(\cn)$, where this $\hat{h}$ is unique up to modification on a null-set, and conversely, \fe\ $\hat{h}\in L^\infty(\cn)$ the function $f:V\to \R$ defined by $z\mapsto \int_\cn \hat{h}(\eta)d\nu_z(\eta)$ is bounded and harmonic.

Recall the definition of a sharp function from \Sr{over}. In \Sr{secSh} we introduce `intersection' and `union' operations between pairs or families of sharp harmonic functions using probabilistic intuition, and show that the family $\cs$ of \shf s of a Markov chain is closed under these operations. Thus $\cs$ carries a \sig-algebra structure, except that \ti\ no ground set. \Tr{thPB} can be interpreted in the following way: any $M$-boundary $\cn$ that can be used as the ground set for this \sig-algebra structure, with its measures agreeing with corresponding probabilities defined with respect to \shf s, is a realisation of the Poisson boundary. A bit more precisely, if $\cn$ is \defi{faithful} to $\cs$, that is, if for every measurable subset  $X$ of $\cn$ \ti\ $s\in \cs$ \st\ almost surely \rw\ ends up in $X$ \iff\ the values of $s$ along its trajectory converge to 1, then $\cn$ is  a realisation of the Poisson boundary.

A direct implication of the comparison of the \sig-algebra structure of $\cs$ to that of $\cn$ is that any two realisations of the Poisson boundary of $M$ are cryptomorphic, see \Cr{crypto} (this fact will not be surprising to experts).

Given a Markov chain it is often easy to guess a realisation of its \PBv, most often in the form of some topological space naturally associated to the chain, e.g.\ the boundary of a compactification, but it is much harder to prove that the guess is correct. Our case, the boundary of a square tiling, is such an example. Other examples include the end-compactification of a tree, and the hyperbolic boundary of a hyperbolic group \cite{KaiPoHypAnn}. The following tool, abstracted from the proof of \Tr{main} via \Tr{thPB}, may be helpful in further such cases. A \defi{topological $G$-boundary} of a graph \g is a topological space $(\cn,\co)$ endowed with  a `projection' $\tau: V \to \co$ so that $\tau(Z^n)$ converges to a point in $\cn$ for almost every \rw\ trajectory $Z^n$, and there is a Borel-measurable function $\tau^*: \cw \to \cn$ mapping almost every $(Z^n)\in \cw$ to $\lim_n \tau(Z^n)$.
Note that defining $\nu_z$ by $O\in \co \mapsto \mu_z(\tau^{*-1}(O))$ turns $\cn$ into a $G$-boundary for $f= \tau^*$. We say that $(\cn,\tau)$ is \defi{layered}, if \ti\ a sequence \seq{G} of finite subgraphs of \g with $\bigcup G_n= G$ the boundaries \seq{B}\ of which satisfy $\mu_z^n(b)= \nu_z \circ \tau(b)$ \fe\ $b\in B_n$, where $\mu_z^n$ denotes the exit distribution of $G_n$ for \rw\ from $z$.

\begin{corollary} \label{cortop}
Let $G$ be a transient graph and let $\cn$ be a layered topological $G$-boundary with projection $\tau$. If \fe\ \shf\ $s$ we have $\lim_{m,n} \nu(\tau(F_m) \sydi \tau(F_n)) =0$, where $F_i:= \{b\in B_i \mid s(b)>1/2\}$, then $\cn$ endowed with the measures $(\nu_z)$ as above is a realisation of the Poisson boundary of \rw\ on \G.
\end{corollary}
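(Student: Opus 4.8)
The plan is to deduce the statement from \Tr{thPB}. As observed just before the statement, putting $f:=\tau^*$ and $\nu_z:=\mu_z\circ\tau^{*-1}$ makes $\cn$ a $G$-boundary, and \rw\ on $G$ is irreducible, so \Tr{thPB} applies; hence it suffices to show that $\cn$ is faithful to every \shf. Given a \shf\ $s$, I must exhibit a measurable set $X_s\subseteq\cn$ such that, almost surely, $\tau^*(Z)\in X_s$ (that is, \rw\ ends up in $X_s$) if and only if $s(Z^n)\to1$ along the trajectory. The candidate is $X_s:=\lim_i\tau(F_i)$, the limit taken in the measure algebra of $(\cn,\nu)$: the hypothesis $\lim_{m,n}\nu(\tau(F_m)\sydi\tau(F_n))=0$ says precisely that $(\tau(F_i))_i$ is Cauchy in this complete measure algebra, so $X_s$ is well defined up to a null set. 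Moreover, irreducibility makes the harmonic measures $\nu_z$ pairwise equivalent with bounded Radon--Nikodym densities --- indeed $\nu_z\ge\Pr_z[\text{hit }w]\,\nu_w$ by the strong Markov property together with the shift-invariance of $\tau^*$, and hitting probabilities are positive --- so $(\tau(F_i))_i$ is Cauchy in the measure algebra of every $\nu_z$ and $\nu_z(\tau(F_i))\to\nu_z(X_s)$ for all $z$.

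The first substantial step is the pointwise identity $\nu_z(X_s)=s(z)$ for every vertex $z$. Writing $T_n$ for the first exit time of $G_n$, we have $T_n\to\infty$ almost surely and $Z^{T_n}\in B_n$ (the exit vertex of $G_n$, with law $\mu_z^n$ under $\Pr_z$), so $\Pr_z[Z^{T_n}\in F_n]=\mu_z^n(F_n)$; by the layered property --- summing $\mu_z^n(b)=\nu_z(\tau(b))$ over $b\in F_n$, and using that the cells $\{\tau(b):b\in B_n\}$ are pairwise essentially disjoint --- this equals $\nu_z(\tau(F_n))$. On the other hand, $s$ being harmonic and $[0,1]$-valued, the process $s(Z^n)$ is a bounded martingale and hence converges almost surely to a limit $M_\infty$ with $\Ex_z[M_\infty]=s(z)$; since $s$ is sharp, $M_\infty\in\{0,1\}$ almost surely, so $\Ex_z[M_\infty]=\Pr_z[M_\infty=1]$. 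As $s(Z^{T_n})\to M_\infty$ and $M_\infty$ takes values in $\{0,1\}$, almost surely both $\{Z^{T_n}\in F_n\text{ for all large }n\}$ and $\{Z^{T_n}\in F_n\text{ infinitely often}\}$ coincide with $\{M_\infty=1\}$; since $\Pr_z[\liminf_n A_n]\le\liminf_n\Pr_z[A_n]\le\limsup_n\Pr_z[A_n]\le\Pr_z[\limsup_n A_n]$ for the events $A_n:=\{Z^{T_n}\in F_n\}$, it follows that $\Pr_z[Z^{T_n}\in F_n]\to\Pr_z[M_\infty=1]=s(z)$. Combining the two computations yields $\nu_z(X_s)=\lim_n\nu_z(\tau(F_n))=s(z)$.

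It remains to upgrade this equality of functions into an equality of events along the trajectory. The map $z\mapsto\nu_z(X_s)$ is harmonic because harmonic measures satisfy $\nu_z=\sum_w p(z,w)\nu_w$; and, $\tau^*$ being shift-invariant, the Markov property gives $\Pr_z[\tau^*(Z)\in X_s\mid\mathcal F_n]=\nu_{Z^n}(X_s)$, which by L\'evy's $0$--$1$ law converges almost surely to $\mathbf 1_{X_s}(\tau^*(Z))$. But $\nu_{Z^n}(X_s)=s(Z^n)$ by the identity just established, and $s(Z^n)\to M_\infty$; hence $M_\infty=\mathbf 1_{X_s}(\tau^*(Z))$ almost surely, that is, almost surely $\tau^*(Z)\in X_s$ precisely when $s(Z^n)\to1$. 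Therefore $\cn$ is faithful to every \shf, and \Tr{thPB} completes the proof.

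I expect the main obstacle to be the identity $\nu_z(X_s)=s(z)$, and within it the role of the layered hypothesis in transferring the purely combinatorial data $F_n\subseteq B_n$ to a genuine subset of the topological boundary $\cn$; once this identification is in hand, the martingale-convergence and L\'evy $0$--$1$ arguments relating $M_\infty$ to the exit point $\tau^*(Z)$ are routine. A secondary technical point is the essential disjointness of the cells $\tau(b)$, $b\in B_n$ (immediate in the square-tiling setting, where they are arcs of the circle meeting only in their endpoints), which is what makes $\nu_z(\tau(F_n))=\mu_z^n(F_n)$ an equality rather than merely the inequality $\nu_z(\tau(F_n))\le\mu_z^n(F_n)$ that subadditivity alone provides.
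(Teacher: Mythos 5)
Your proof is correct, and although it shares the paper's overall skeleton --- reduce to faithfulness via \Tr{thPB} and build the candidate set as a limit of the sets $\tau(F_n)$ (your Cauchy-in-the-measure-algebra construction of $X_s$ is exactly what the paper's subsequence-plus-$\liminf$ construction extracts) --- your verification of faithfulness takes a genuinely different route. The paper (see \Lr{lemFai} and its adaptation in \Sr{subfaith}) argues combinatorially along the trajectory: after passing to a subsequence with summable errors it applies the \BCl\ to show that \rw\ almost surely exits only finitely many $B_n$ at ``impurities'' and switches between the $F_n$ and $F'_n$ finitely often, and then reads off $\mu_o(1^s \sydi f^{-1}(X))=0$ from this description. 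You instead establish the pointwise identity $\nu_z(X_s)=s(z)$ for every $z$ (layeredness gives $\nu_z(\tau(F_n))=\mu_z^n(F_n)$, and sharpness plus bounded martingale convergence gives $\mu_z^n(F_n)\to s(z)$) and then conclude by \Lzol; note that this last step is precisely the paper's own observation \eqref{convfai} applied to $X_s$, so you could simply cite it. Your route is shorter, yields faithfulness at every starting vertex simultaneously rather than only at $o$, and cleanly separates where each hypothesis enters: the Cauchy condition produces $X_s$, layeredness identifies $\nu_z(X_s)$ with $s(z)$, and \eqref{convfai} does the rest. Two points you rightly flag should not be treated as automatic in the abstract setting: the equality $\nu_z(\tau(F_n))=\sum_{b\in F_n}\nu_z(\tau(b))$ requires the cells $\tau(b)$, $b\in B_n$, to be pairwise $\nu_z$-essentially disjoint, which is immediate for the square tiling but is an implicit extra assumption on a general layered boundary (the paper is equally silent on this); and $s(Z^{T_n})\to M_\infty$ requires the exit times of the $G_n$ to tend to infinity, i.e.\ the finite subgraphs $G_n$ to exhaust $G$ in the obvious nested sense.
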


The following observation, proved in \Sr{secPar}, is one of the main tools in the proof of \Tr{main} and might be of independent interest. Here, a graph is considered as a metric space where every edge is a copy of the real unit interval, and so each square of the tiling is foliated into horizontal intervals, one for each inner point of the corresponding edge.

\begin{observation} \label{obsbm}
Let \g be a plane, uniquely absorbing graph and consider its square tiling $T$ of the cylinder $K$. For any circle $L\subset K$ parallel to the base of $K$, let $B$ be the set of points of \g the images of which lie in $L$. Then the widths $w(T(b)),$ $b\in B$ of these images coincide with the exit probabilities of standard brownian motion on \g started at the reference vertex $o$ and killed at $B$.
\end{observation}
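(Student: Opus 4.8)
\medskip\noindent\emph{Proof sketch.} The plan is to reduce the Observation to the classical identification of the exit (harmonic) measure of a reversible chain with an electrical current, applied inside the finite piece of \G that $B$ cuts off from the ends. Recall from \Sr{constr} that the square tiling is built from the voltage function $v$: it is harmonic on $G\setminus\{o\}$ in the metric-graph sense (affine on each edge, \knl\ at each vertex), largest at $o$, tends to $0$ at the ends of \G, and carries the unit current flow from $o$ to infinity; the square $R_e$ of an edge $e=xy$ has side length equal to the current $i(e)=|v(x)-v(y)|$ through $e$, with vertical span $[\min(v(x),v(y)),\max(v(x),v(y))]$. Fix a circle $L$ at height $t$; discarding the countably many heights that are values $v(x)$ at a vertex (dealt with identically: then $x\in B$ and $w(T(x))$ is the current through $x$), the set $B$ consists of exactly one point $b_e$ per edge $e$ whose endpoints straddle $t$, and $w(T(b_e))=i(e)$ by construction, while $\sum_e i(e)=1$ since the $T(b_e)$ tile $L$. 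So it suffices to prove that for standard Brownian motion $(X_s)$ on \G started at $o$, with $\sigma:=\inf\{s:X_s\in B\}$, one has $\PrII{o}{X_\sigma=b_e}=i(e)$ for every $e$ crossing level $t$.

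First I would dispose of the soft points. As \G is uniquely absorbing, Brownian motion from $o$ converges to the base circle a.s.\ and in particular crosses every level, so $\sigma<\infty$ a.s.; moreover $B$ separates $o$ from the ends, and the component $C$ of $G\setminus B$ containing $o$ lies in $\{v>t\}$ and is \emph{finite} --- an infinite $C$ would, being connected and locally finite, contain a ray along which $v\ge t$, contradicting $v\to0$ at the ends (this finiteness is in any case part of the description of the level circles in \cite{BeSchrST}). Hence, up to time $\sigma$, the process is standard Brownian motion on the finite metric graph $C$ killed on its boundary $B$, so $\PrII{o}{X_\sigma=b}=h_b(o)$ for each $b\in B$, where $h_b\colon C\to[0,1]$ is the metric-graph harmonic function with boundary values $\mathbb{1}_b$ on $B$. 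This equality does not involve $v$, so the failure of $v$ to be harmonic at $o$ is immaterial: $o$ is an interior point of $C$.

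The electrical content is now isolated in two facts. (i) The restriction to $C$ of the global unit current flow is a unit flow from $o$ out through $B$; it is a potential gradient, namely of $v|_C$, and $v|_C$ is constant on $B$, so (up to grounding $B$) it is exactly the current flow of the finite network $C$ with unit source at $o$; and the portion of it leaving $C$ at $b_e$ is the global current through $e$, namely $|v(x)-v(y)|=i(e)$. (ii) For a reversible chain on a finite network with grounded boundary $B$ and a unit source at an interior point $o$, the exit measure $b\mapsto h_b(o)$ coincides with the current leaving the network at each point of $B$. Granting (ii) and combining with (i): $\PrII{o}{X_\sigma=b_e}=h_{b_e}(o)=i(e)=w(T(b_e))$, which is the Observation.

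What is left, and where the only real obstacle lies, is (ii). For a discrete reversible chain this is the usual last-exit decomposition: $h_b(o)=\PrII{o}{X_\sigma=b}=\sum_x G^B(o,x)\,p(x,b)$ over the interior neighbours $x$ of $b$, with $G^B(o,x)$ the expected number of visits to $x$ before $\sigma$; since $x\mapsto G^B(o,x)/c(x)$ is precisely the grounded-$B$ unit-source potential, the sum telescopes to the current entering $b$. For standard Brownian motion on the metric graph one repeats this with the Green function of the killed process in place of $G^B$ and a normal derivative in place of the discrete gradient, using that the process is reversible with respect to length measure and that its Green function, divided by the speed density, is the potential from (i); the only point requiring care concerns edges of $C$ carrying a point of $B$ in their interior, and this requires no new idea. (Alternatively, (ii) can be quoted from the literature on random walks and electrical networks.) The remaining chores are to match the normalisation and orientation conventions of \cite{BeSchrST} so that literally $w(T(b_e))=i(e)$, and to confirm that ``standard Brownian motion on \G'' has harmonic exit distributions and reaches $B$ almost surely --- both consequences of unique absorbingness.
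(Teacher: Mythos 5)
Your overall route is the same as the paper's: identify the width of a boundary point with the current flowing through it (the expected net number of traversals, i.e.\ the last-exit/Green-function computation), observe that the restriction of the global unit flow to the region $C$ above $L$ is the grounded unit-source flow of the chain killed at $B$ (this is the paper's ``stretching'' lemma, \Lr{stretch}), and conclude that the current leaving at $b$ equals the exit probability at $b$ (\Lr{wn}, together with the dummy-vertex device of \Lr{dummy} to pass between Brownian motion and the discrete walk). So the electrical content of your sketch is the intended one.

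There is, however, one genuine error: your claim that the component $C$ of $G\setminus B$ containing $o$ is finite, and the argument you give for it. You argue that an infinite $C$ would contain a ray along which $v\ge t$, ``contradicting $v\to0$ at the ends''. But the only decay statement available (and true in general) is \Lr{hto0}: $h(Z^n)\to 0$ along almost every \emph{random walk trajectory}, not along every ray; $h$ need not tend to $0$ along the ends of the graph, and the paper explicitly notes (after \eqref{Bn}) that the subgraph above a level circle ``is a finite graph \ldots for a large class of planar graphs \g\ but not always''. Consequently you cannot simply quote the finite-network identification of exit measure with current. The repair is exactly what the paper does: finiteness is never needed, only the fact that the walk from $o$ hits $B$ almost surely (which does follow from \Lr{hto0}), since that already guarantees (a) that a bounded harmonic function on $C$ is determined by its boundary values on $B$ (so $v|_C$, being constant on $B$, is the grounded potential --- your point (i)), and (b) that the last-exit decomposition $\PrII{o}{X_\sigma=b}=\sum_x G^B(o,x)p(x,b)$ converges and telescopes as you describe, because $G^B\le G<\infty$ by transience and the inner sum is locally finite. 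With that substitution your argument goes through and coincides with the paper's.
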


Our proof of \Tr{main} applies to a larger class of graphs where the degrees are not necessarily bounded, see \Cr{unbou}. Such graphs have attracted a lot of interest lately, see \cite{GurNachRec} and references therein.

\medskip
We prove \Tr{thPB} in \Srs{secSh} and \ref{secPr} and \Tr{main} in \Sr{hard}, after constructing the square tiling in \Sr{constr} and proving some general properties, including interesting probabilistic interpretations of its geography like \Or{obsbm}, in \Sr{geo}. We deduce \Cr{cortop} in \Sr{subfaith}.

\section{Preliminaries} \label{prelim}

Let $\g=(V,E)$ be a graph fixed throughout this section, where $V=V(G)$ is its set of vertices and $E= E(G)$ its set of edges. A \defi{walk} on \G\ is a (possibly finite) sequence \seq{v}\ of elements of $V$ \st\ $v_i$ is always connected to $v_{i+1}$ by an edge. More generally, we define a walk on the state space $V$ of a Markov chain in a similar manner, where we might or might not demand that the transition probabilities $p_{v_i \to v_{i+1}}$ be positive.

A \defi{plane graph} is a graph \g endowed with a fixed embedding in the plane $\R^2$; more formally, \g is a plane graph if $V(G)\subset \R^2$ and each edge $e\in E(G)$ is an arc between its two vertices that does not meet any other vertices or edges. A graph is \defi{planar} if it admits an embedding in $\R^2$. Note that a given planar graph can be isomorphic (in the graph-therotic sense) to various plane graphs that cannot necessarily be mapped onto each other via a homeomorphism of $\R^2$.

A plane graph $G\subset \R^2$ is \defi{uniquely absorbing}, if \fe\ finite subgraph $G_0$ \ti\ exactly one connected component $D$ of $\R^2\sm G_0$ that is \defi{absorbing}, that is, \rw\ on \g visits $G \sm D$ only finitely many times with positive probability (in particular, \g is transient).
Uniquely absorbing graphs are precisely those admitting a square tiling. Example classes include all transient 1-ended planar graphs, which includes the bounded-degree 1-ended planar Gromov-hyperbolic graphs, all transient trees and, more generally, all transient graphs that can be embedded in $\R^2$ without accumulation points of vertices. 

We assume that \g is endowed with an assignment $c:E\to \R_+$ of \defi{conductances} to its edges, which are used to determine the behaviour of \rw\ on \g as follows. 

A {\em random walk} on
$G$ begins at some vertex and when at vertex $x$, traverses one of the edges $\ded{xy}$ incident to $x$ according to the probability distribution
\labequ{trpr}{
p_{x\to y}:= \frac{c(xy)}{\pi_x},
}
where $\pi_x:= \sum_{y\in  N(x)} c(xy)$ and $N(x)$ denotes the set of  \defi{neighbours} of $x$, that is, the vertices connected to $x$ by an edge. When $c=1$, which is the case most often considered, $\pi_x$ coincides with the \defi{degree} of $x$, and we have \defi{simple \rw}, i.e.\ $y$ is chosen according to the uniform distribution on $N(x)$.

Formally, there are two standard ways of rigorously formalising \rw\ as a probability space: the first is as a Markov chain in the obvious way. The second, and the one that we will adhere to in this paper, is by considering \rw\ on \g as a measurable space $(\cw, \Pi)$, endowed with a family of measures $(\mu_z)_{z\in V}$ indexed by the vertices of \G, where $\cw$ is the set of 1-way infinite walks on \G, called \defi{path space}, $\Pi$ is the \sig-algebra on $\cw$ generated by the \defi{cylinder sets}, i.e.\ subsets of $\cw$ comprising all walks having a common finite initial subwalk, and $\mu_z$ is the  probability measure on $(\cw, \Pi)$ corresponding to fixing $z$ as the starting vertex. Note that once $z$ is fixed, \eqref{trpr} uniquely determines $\mu_z$; see \cite{WoessBook09} for details.

For convenience, we will also assume that every Markov chain $M$ in this paper is formally given in the above form, that is, a choice of a (possibly random) starting point $o$ and a family of measures $(\mu_z)_{z\in V}$ on path space $(\cw, \Pi)$, where $V$ denotes the state space of $M$ and $\mu_z$ is the law of $M$ conditioning on the starting point being $z$. In other words, we formalize $M$ as a \rw\ on $V$; in contrast to the \rw\ on a graph defined above, such \rw\ need not be reversible, see \cite{LyonsBook}. 
\medskip

It will be convenient in some cases to think of our graph \g as a metric space constructed as follows. Start with the discrete set $V$, and for every edge $xy\in E$ join $x$ to $y$ by an isometric copy of a real interval of length $1/c(e)$ (the \defi{resistance} of $e$). Then, one can consider a brownian motion on this space (as defined e.g.\ in \cite{BC,BPY}) that behaves locally like standard brownian motion on $\R$, and it turns out that the sequence of distinct vertices visited by this brownian motion has the same distribution as \rw\ governed by \eqref{trpr}.

\medskip
A function $h: V \to \R$ on the vertex-set of a graph \G, or more generally on the state space $V$ of a Markov chain, is \defi{harmonic at $x\in V$} if it satisfies
\labtequ{harm}{$h(x) = \sum_{y\in V} p_{x\to y} h(y)$,}
where again $p_{x\to y}$ denotes the transition probability, and it is called \defi{harmonic} if it is harmonic at every $x$.

A fundamental property of harmonic functions is that their values inside a set are determined by the values at the boundary of that set; to make this more precise, let $B$ be a subset of $V$, and $x$ a vertex \st\ \rw\ from $x$ visits $B$ almost surely; for example, $B$ could be the boundary of a ball of \g containing $x$. Then, letting $b$ be the first vertex of $B$ visited by \rw\ from $x$, we have
\labtequ{harmB}{$h(x) = \ExI{h(b)}$.}
In other words, the boundary values of a harmonic function uniquely determine the function.

We say that \rw\ \defi{hits} $B$ at $b\in B$ if the first element of $B$ it visit is $b$.

The following fact, which is a special case of \Lzol\ \cite{karatzas}, will come in handy in many occasions
\labtequ{lzol}{For every $\mu$-measurable event $A\subseteq \cw$, \rw\ $Z^n$ satisfies $\lim_n \mu_{Z^n}(A)={\mathbb 1}_A$ almost surely. In particular, $\mu_{Z^n}(A)$ converges to 0 or 1.}
Here, ${\mathbb 1}_A$ is as usual the characteristic function (from $\cw$ to ${0,1}$) of $A$. In other words, if we observe at each step $i$ of our \rw\ the probability $p_i$ that $A$ will occur given the current position (the past does not matter because of the Markov property), then almost surely $p_i$ will converge to $1$ and $A$ will occur or  $p_i$ will converge to $0$ and $A$ will not occur.

For a walk $W\in \cw$ define the \defi{shift} $t(W)$ to be the walk obtained from $W$ by deleting the first step. An event $A\subseteq \cw$ is called if \defi{tail event} if $t(A)=A$ (we could be less strict here and write $\mu( t(A)\sydi A)=0$ instead, where $\sydi$ denotes symmetric difference).

\section{Sharp functions and the \PBv} \label{secSh}

Let $M= Z^1,Z^2,\ldots$ be an irreducible Markov chain with state space $V$ fixed throughout this section.
Call a harmonic function $h: V\to \R^+$ \defi{sharp}, if the range of $h$ is $[0,1]$ and $\lim_n h(Z^n)$ equals 0 or 1 almost surely (the limit exists almost surely by the bounded martingale convergence theorem). Note that whether $h$ is sharp or not does not depend on the (possibly random) starting point $o$, for if $r$ is any other element of $V$ then, by irreducibility, the probability to visit $r$ from 
$o$ is positive. Let $\cs=\cs(V)$ denote the set of \shf s on $V$.

\begin{lemma} \label{sharph}
If $h(z): V \to [0,1]$ equals the probability that \rw\ from $z$ will satisfy a tail event $A$ \fe\ $z$, then $h$ is a \shf.
\end{lemma}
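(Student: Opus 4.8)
The plan is to verify the two requirements in the definition of a sharp function: that $h$ is harmonic with range contained in $[0,1]$, and that $h(Z^n)$ converges to $0$ or $1$ almost surely. The second requirement follows immediately from L\'evy's zero-one law in the form \eqref{lzol}, so essentially all the work lies in verifying harmonicity; boundedness is clear because $h(z)=\mu_z(A)$ is a probability.

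For harmonicity I would use a one-step decomposition. Fix $x\in V$ and split $\mu_x(A)$ according to the first step of the walk: $h(x)=\mu_x(A)=\sum_{y}p_{x\to y}\,\mu_x(A\mid Z^2=y)$, the sum ranging over states $y$ with $p_{x\to y}>0$. Conditioned on $Z^2=y$, the trajectory $(Z^2,Z^3,\dots)$ has the law of a walk started at $y$; and since $A$ is a tail event, whether a walk lies in $A$ depends (up to a $\mu$-null set) only on $(Z^2,Z^3,\dots)$, i.e.\ ${\mathbb 1}_A$ is invariant under the shift $t$. Hence $\mu_x(A\mid Z^2=y)=\mu_y(A)=h(y)$, so that $h(x)=\sum_y p_{x\to y}h(y)$, which is precisely \eqref{harm}. (Equivalently, one checks that $h(Z^n)$ is a bounded martingale and reads off harmonicity by evaluating the martingale relation at the first step, using the Markov property.)

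It then remains to establish the sharpness condition, which is now immediate: by hypothesis $h(Z^n)=\mu_{Z^n}(A)$ for every $n$, so \eqref{lzol} applied to the event $A$ gives $\lim_n h(Z^n)=\lim_n\mu_{Z^n}(A)={\mathbb 1}_A$ almost surely, a value in $\{0,1\}$; as remarked after the definition of $\cs$, irreducibility makes this conclusion independent of the (possibly random) starting vertex, so $h\in\cs$. The single point needing care is the shift-invariance of ${\mathbb 1}_A$ used in the harmonicity step: with the strict reading $t(A)=A$ one obtains the inclusion $A\subseteq t^{-1}(A)$ directly, while the reverse inclusion holds modulo a null set --- which is exactly the relaxed notion $\mu(t(A)\sydi A)=0$ of tail event permitted in \Sr{prelim} --- so no more than routine bookkeeping is involved.
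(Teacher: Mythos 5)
Your proof is correct and follows exactly the same route as the paper's (which simply says harmonicity follows from the tail property plus the Markov property, and sharpness from L\'evy's zero-one law \eqref{lzol}); you have merely filled in the one-step decomposition that the paper leaves implicit.
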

\begin{proof}
The fact that $h$ is harmonic follows immediately from the fact that $A$ is a tail event and the Markov property. Sharpness follows from \eqref{lzol}.
\end{proof}

Let $s\in \cs$ and fix $z\in V$. Define the real valued random variable $X_n$ to be $s(Z^n)$  where  $Z^n$ denotes \rw\ from $z$. Define the random variable $X$ by letting $X=\lim s(Z^n)$ if this limit exists (which it does almost surely by the bounded martingale convergence theorem), and $X=0$ otherwise. Since almost sure convergence implies weak convergence \cite{Karr}, we immediately obtain
\labtequ{weakly}{The sequence $X_n$ converges weakly to $X$.}
Alternatively, let $\seq{B}$ be a sequence of subsets of $V$ \st\ our Markov chain visits every $B_n$ almost surely, and let $X_n=s(Z^{t_n})$, where $t_n$ is the first time $t$ \st\ $Z^t\in B_n$ (if no such $t$ exists, which happens with probability 0, we can let $X_n=0$ to make sure $X_n$ is always defined). Then $\lim s(X_n)$ still exists almost surely by the choice of $\seq{B}$, and so \eqref{weakly} also holds in this case.

\note{another way to prove this is using Levy's 0-1 law}

As $s$ is harmonic, we have
\labtequ{hzexp}{$s(z)= \ExI{X_n}$.}
Given $z\in V$, let $1^s$ be the event, in the path space $\cw_z$, that for random walk $Z^n$ from $z$ we have $\lim s(Z^n) =1$. Note that this event is $\mu_z$-measurable. Define the event $0^s$ similarly.

\begin{corollary}\label{hprob}	
If $s$ is a \shf, then $s(z)=\mu_z(1^s)$ \fe\ vertex $z$.
\end{corollary}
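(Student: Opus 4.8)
The plan is to read off Corollary~\ref{hprob} directly from the identity \eqref{hzexp}, by taking a limit of the random variables $X_n$ appearing there. Concretely, fix a vertex $z$ and run random walk $Z^n$ from $z$. Since $s$ is sharp, the limit $X := \lim_n s(Z^n)$ exists almost surely and takes only the values $0$ and $1$; hence $X = {\mathbb 1}_{1^s}$ almost surely, so $\ExI{X} = \mu_z(1^s)$.

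The key step is to pass from \eqref{hzexp}, which says $s(z) = \ExI{X_n}$ for every $n$, to $s(z) = \ExI{X}$. I would do this via the bounded convergence theorem: the random variables $X_n = s(Z^n)$ all take values in $[0,1]$ (as $s$ has range $[0,1]$), they converge almost surely to $X$, and so their expectations converge to $\ExI{X}$. Combining, $s(z) = \lim_n \ExI{X_n} = \ExI{X} = \ExI{{\mathbb 1}_{1^s}} = \mu_z(1^s)$. Alternatively, one can invoke \eqref{weakly}: $X_n$ converges weakly to $X$, and since all variables are uniformly bounded this upgrades to convergence of first moments, giving the same conclusion. Either route is short; the substantive content is already packaged in the earlier observations.

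I do not expect a real obstacle here, as the corollary is essentially a restatement of \eqref{hzexp} once one identifies the weak (or a.s.) limit $X$ with the indicator of $1^s$. The only point requiring a word of care is that $X$ is genuinely $\{0,1\}$-valued almost surely — this is exactly the definition of sharpness — and that the event $1^s$ is $\mu_z$-measurable, which was noted when $1^s$ was defined. Given these, writing ${\mathbb 1}_{1^s} = X$ a.s.\ and taking expectations finishes the proof; no estimation or construction is needed.
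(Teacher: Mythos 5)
Your argument is correct and essentially identical to the paper's: both pass from $s(z)=\ExI{X_n}$ to $s(z)=\ExI{X}$ using uniform boundedness of the $X_n$ (the paper phrases it via weak convergence upgrading to convergence of expectations, you via bounded convergence from the a.s.\ limit, which is the same point), and both then identify $\ExI{X}$ with $\mu_z(1^s)$ using sharpness. No issues.
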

\begin{proof}
Since the $X_n$ are uniformly bounded, their weak convergence \eqref{weakly} implies convergence in $L^1$, that is, $\lim_n \ExI{X_n}= \ExI{X}$, where $X$ is defined as in \eqref{weakly}. Since $s$ is sharp, $\ExI{X}$ equals the probability to have $\lim_n s(Z^n)=1$ by the definition of $X$. Combined with \eqref{hzexp} and the definition of `sharp' this completes our proof.
\end{proof}

\note{sharp functions correspond to equiv. classes of \stoco\ sequences.}

\begin{corollary}\label{close1}	
If $s$ is a \shf\ that is not identically 0, then \fe\ $\eps>0$ \ti\ $z\in V$ with $s(z)>1-\eps$.
\end{corollary}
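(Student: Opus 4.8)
The plan is to bootstrap from Corollary~\ref{hprob}, which already converts the algebraic content into a probabilistic one. Since $s$ is not identically $0$, there is a vertex $v\in V$ with $s(v)>0$. Applying Corollary~\ref{hprob} at $z=v$ gives $\mu_v(1^s)=s(v)>0$, so the event $1^s\subseteq \cw_v$ has positive $\mu_v$-measure; in particular it is nonempty. Pick any walk $W=Z^1Z^2\dotsm \in 1^s$, so that $Z^1=v$. By the definition of $1^s$ we have $\lim_n s(Z^n)=1$, hence for the given $\eps>0$ there is an $n$ with $s(Z^n)>1-\eps$, and the vertex $z:=Z^n$ is as desired. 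Equivalently, this shows $\sup_{v\in V} s(v)=1$ for any sharp $s\not\equiv 0$.

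I expect no real obstacle here: the whole point is that Corollary~\ref{hprob} turns the hypothesis ``$s\not\equiv 0$'' into the statement that, started from a suitable vertex, the random walk makes $s$ converge to $1$ with positive probability, and sharpness then forces $s$ to exceed $1-\eps$ somewhere along such a trajectory. The only things to be mildly careful about are (i) that ``not identically $0$'' is exactly what is needed to produce a single starting vertex $v$ with $s(v)>0$, and (ii) that a subset of path space of positive measure is in particular nonempty, so there is an honest walk to extract the vertex $z$ from. If one prefers to avoid even mentioning a specific trajectory, one can instead argue directly: $s(v)=\ExI{X}$ where $X=\lim_n s(Z^n)\in\{0,1\}$, so $\ExI{X}>0$ implies $\Pr[X=1]>0$, and on that event $s(Z^n)>1-\eps$ eventually; either phrasing is a two-line argument.
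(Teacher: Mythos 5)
Your proof is correct and follows the same route as the paper: use Corollary~\ref{hprob} to convert $s(v)>0$ into $\mu_v(1^s)>0$, then extract a vertex with $s$-value close to $1$ from a trajectory in that positive-measure event. You simply spell out the final step that the paper leaves implicit ("in particular there are vertices $z$ with $s(z)$ arbitrarily close to 1"), which is a welcome clarification but not a different argument.
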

\begin{proof}
Since $s\neq 0$, \ti\ $o$ with $s(o)>0$. By \Cr{hprob} the probability to have $\lim_n s(Z^n)=1$ for \rw\ $Z^n$ from $o$ is positive, in particular there are vertices $z$ with $s(z)$ arbitrarily close to 1.
\end{proof}

Given a sequence \seq{s} of sharp harmonic functions, we define their \defi{union} $\bigcup s_i$ by 
$z\mapsto \mu_{z}(\bigcup 1^{s_i})$ and their \defi{intersection} $\bigcap s_i$ by $z\mapsto \mu_{z}(\bigcap 1^{s_i})$. The \defi{complement} $s_1^c$ of $s_1$ is the function $1-s_1$; note that, by \Cr{hprob}, $s_1^c(x) = \mu_x(0^{s_1})$. 

\begin{lemma} \label{comb}
Let $(s_i)$ be a sequence of sharp harmonic functions. Then the functions 
\begin{enumerate}
\item \label{ci} $\bigcup s_i$, 
\item \label{cii} $s_1^c$,
\item \label{ciii} $\bigcap s_i$, 
\end{enumerate}
are also harmonic and sharp.
\end{lemma}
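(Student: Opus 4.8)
The plan is to deduce all three assertions from Lemma~\ref{sharph}, which guarantees that for any tail event $A\subseteq\cw$ the function $z\mapsto\mu_z(A)$ is harmonic and sharp. So the only work is to exhibit each of $\bigcup s_i$, $s_1^c$, $\bigcap s_i$ as the probability-of-a-tail-event function for a suitable tail event, using that the class of tail events is closed under complementation and countable unions and intersections. As a first step I would note that, for a \shf\ $s$, the set $\{W\in\cw\mid \lim_n s(Z^n)=1\}$ (of which the event $1^s$ in $\cw_z$ is the trace on walks from $z$) is a tail event: whether $\lim_n s(Z^n)$ equals $1$ depends only on the tail of the walk, so this set is $t$-invariant (to see $t(A)\supseteq A$ one prepends an admissible step, possible by irreducibility, which leaves the tail unchanged). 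The analogous statement holds for $0^s$. Then $A_i^c$, $\bigcup_i A_i$ and $\bigcap_i A_i$ are again tail events whenever the $A_i$ are, which is either checked directly by the same prepending argument or, more cheaply, is immediate for the relaxed notion of tail event ($\mu(t(A)\sydi A)=0$) mentioned just after the definition. Consequently $\bigcup_i 1^{s_i}$, $\bigcap_i 1^{s_i}$ and $0^{s_1}$ are all tail events.

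Granting this, part~\eqref{cii} follows immediately: either directly, since $s_1^c=1-s_1$ has range $[0,1]$, is harmonic because $s_1$ is, and satisfies $\lim_n s_1^c(Z^n)=1-\lim_n s_1(Z^n)\in\{0,1\}$ almost surely; or, by Corollary~\ref{hprob}, $s_1^c(x)=\mu_x(0^{s_1})$, so Lemma~\ref{sharph} applies to the tail event $0^{s_1}$. For part~\eqref{ci}, by definition $\bigl(\bigcup_i s_i\bigr)(z)=\mu_z\bigl(\bigcup_i 1^{s_i}\bigr)$, and since $\bigcup_i 1^{s_i}$ is a tail event, Lemma~\ref{sharph} gives that $\bigcup_i s_i$ is harmonic and sharp. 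Part~\eqref{ciii} is identical with $\bigcap_i 1^{s_i}$ in place of $\bigcup_i 1^{s_i}$.

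I do not expect a genuine obstacle here; the proof is essentially bookkeeping once Lemma~\ref{sharph} and the Markov property are in hand. The only points demanding a little care are measure-theoretic: confirming that each $1^{s_i}$ lies in $\Pi$ (which it does, being defined through a countable limit of $\Pi$-measurable functions $Z^n\mapsto s_i(Z^n)$) and that the tail-event manipulations above are valid — both of which are routine, and both of which can be handled uniformly by working modulo $\mu$-null sets.
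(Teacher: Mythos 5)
Your proposal is correct and takes essentially the same route as the paper: the paper's entire proof is the observation that all three functions are probabilities of tail events of the walk, hence harmonic and sharp by Lemma~\ref{sharph}. The extra bookkeeping you supply (that $1^{s_i}$, $0^{s_1}$ and their countable unions and intersections are indeed tail events, and that they lie in $\Pi$) is exactly the detail the paper leaves implicit.
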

\begin{proof}
Since all these functions are probabilities of tail events of \rw, they are harmonic and  sharp by \Lr{sharph}.
\end{proof}
This means that the family of sharp harmonic functions satisfies the axioms of a \sig-algebra, except that it is formally not a family of subsets of a given set. One way to intuitively interpret \Tr{thPB} is to say that if a measure space \cn\ has the `same' \sig-algebra structure as the family of \shf s of \G, and some obvious requirements are fulfilled, then \cn\ can be identified with the \PBv\ of \G. Let us make this idea more precise.

\medskip
A \defi{$M$-boundary}, or a \defi{$G$-boundary} if $M$ happens to be \rw\ on a graph \G, is a measurable space $(\cn,\Sigma)$ endowed with a family of 
probability measures $\{\nu_z, z\in V \}$ and a measurable, measure preserving, shift-invariant function  $f: \cw \to \cn$, where $\cw:= \bigcup_{z\in V} \cw_z$ is the set of 1-way infinite walks in \G. Here, we say that $f$ is \defi{measure preserving} if $\nu_z(X)=\mu_z(f^{-1}(X))$ \fe\ $ z\in V$ and $X\in \Sigma$\note{(i.e.\ $\nu$ is the push-forward\sss of $\mu$)}. The term \defi{shift-invariant} means that if the walk $W'$ is obtained from $W$ by skipping the first step, then $f(W)=f(W')$; in other words, $f$ can be thought of as a function from the set of equivalence classes \wrt\ the shift (called `ergodic components' in \cite{KaiPoHypAnn}) 
to \cn. 

The fact that $f$ is shift-invariant and measure preserving implies 
\labtequ{harmnu}{$\nu_z = \sum_{y\sim z} p_{zy} \nu_y$.}
Indeed, since $f$ is shift-invariant, \rw\ from $z$ `finishes' in $X\in \Sigma$ \iff\ its subwalk after the first step finishes in $X$, and so we have $ \mu_z(f^{-1}(X)) = \sum_{y\sim z} p_{zy} \mu_y(f^{-1}(X)) $. Since we are demanding that $f$ is measure preserving, our assertion follows.

This means in particular that the measures $\nu_z$ are pairwise equivalent when $G$ is connected: if $\nu_z(X)>0$ for some set $X\in \Sigma$, then $\nu_y(X)>0$ for any neighbour $y$ of $z$.

Let $s$ be a sharp harmonic function. We say that $\cn$ is \defi{faithful} to $s$, if 
\ti\ $X\in \Sigma$ \st\ $\mu_z(1^s \sydi f^{-1}(X)) = 0$ \fe\ $z\in V$, where $\sydi$ denotes symmetric difference. Note that such a set $X$ is unique up to modification by a null-set of $\nu$.

\comment{ 
	\fe\ $z\in V$,
\begin{enumerate}
 \item \label{fi} $f(1^s)$ is $\nu_z$-measurable, and 
 \item \label{fii} $\nu_z(f(1^s))= \mu_{z}(1^s)$.
\end{enumerate}
\note{\ref{fii} does not follow from measure preservingness of $f$}

Note that, by \Cr{hprob}, the latter probability equals $s(z)$.
} 

\medskip
For every measurable subset $X$ of \cn, the function $s=s_X$ defined by $z\mapsto \nu_z(X)$ is harmonic and sharp by \Lr{sharph}. We claim that 
\labtequ{convfai}{$\mu(1^s\sydi f^{-1}(X))=0$}
To see this, set $\Phi := f^{-1}(X)\sydi 1^s$, and recall that both $f^{-1}(X)$ and $1^s$ are $\mu$-measurable events in path space, hence so is $\Phi$. Now note that $\mu_z(f^{-1}(X)\sydi 1^s) =$\\ 
$ \mu_{z}(1^s \cap f^{-1}(X^c))+ \mu_{z}(0^s \cap f^{-1}(X))$,\\ 
where we used the fact that $\mu_{z}(1^s \cup 0^s)=1$ as $s$ is sharp. It is easy to see that both these summands equal 0 using the definition of $s$, the Markov property of \rw, and the fact that $f$ is measure preserving.

Note that if \cn\ is faithful to every \shf, then combined with \eqref{convfai} this means that, up to perturbations by null-sets, the correspondence between measurable subsets of \cn\ and \shf s is one-to-one. Combined with \Tr{thPB}, this observation yields

\begin{corollary} \label{crypto}
If $(\cn,\Sigma)$ is a realisation of the Poisson boundary of a graph $G$, then \ti\ a bijection $\sig$ from the set of equivalence classes of $\Sigma$ (where two elements are equivalent if they differ by a null-set) to the set of \shf s of $G$ \st\ $\nu_z(X)= \mu_z(1^{\sig([X])}_z)$ \fe\ $X\in \Sigma$ and $z\in V(G)$. Thus any two realisations of the Poisson boundary of $G$ are cryptomorphic.
\end{corollary}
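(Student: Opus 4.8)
The plan is to invoke \Tr{thPB} to pass to the situation where $\cn$ is faithful to every sharp harmonic function, and then simply read off the desired bijection from the correspondence $X\mapsto s_X$ that was set up just before the statement. Since $(\cn,\Sigma)$ is a realisation of the Poisson boundary, \Tr{thPB} tells us that $\cn$ is faithful to every $s\in\cs$. Recall that for $X\in\Sigma$ the function $s_X\colon z\mapsto\nu_z(X)$ lies in $\cs$ by \Lr{sharph}, and that $\mu(1^{s_X}\sydi f^{-1}(X))=0$ by \eqref{convfai}. As $G$ is connected the measures $\nu_z$ are pairwise equivalent, so the notion of a $\nu$-null set is unambiguous and two sets in $\Sigma$ are equivalent exactly when their symmetric difference is $\nu$-null. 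I would define $\sig([X]):=s_X$; this is well defined, since if $X\sydi X'$ is $\nu$-null then $\nu_z(X)=\nu_z(X')$ for every $z$, hence $s_X=s_{X'}$.

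Next I would check that $\sig$ is a bijection. For injectivity, suppose $s_X=s_{X'}$. Then $1^{s_X}$ and $1^{s_{X'}}$ are literally the same subset of path space, so applying \eqref{convfai} to both $X$ and $X'$ gives $\mu(f^{-1}(X)\sydi f^{-1}(X'))=0$, i.e.\ $\mu(f^{-1}(X\sydi X'))=0$; since $f$ is measure preserving this yields $\nu_z(X\sydi X')=0$ for every $z$, whence $[X]=[X']$. For surjectivity, take an arbitrary $s\in\cs$; by faithfulness there is $X\in\Sigma$ with $\mu_z(1^s\sydi f^{-1}(X))=0$ for all $z$, so using $s(z)=\mu_z(1^s)$ from \Cr{hprob} together with $\nu_z(X)=\mu_z(f^{-1}(X))$ we obtain $s(z)=\nu_z(X)=s_X(z)$ for every $z$, i.e.\ $\sig([X])=s$. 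The identity $\nu_z(X)=\mu_z(1^{\sig([X])})$ required by the statement is then immediate: $\mu_z(1^{s_X})=\mu_z(f^{-1}(X))=\nu_z(X)$ by \eqref{convfai} and measure preservingness.

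For the cryptomorphism claim I would note that $\sig$ is in fact an isomorphism of measure algebras. It intertwines the Boolean operations: $s_{X^c}=s_X^c$ holds directly because $\nu_z(X^c)=1-\nu_z(X)$, and for a sequence $(X_i)$ the events $1^{s_{X_i}}$ agree with $f^{-1}(X_i)$ up to null sets by \eqref{convfai}, so $\bigcup 1^{s_{X_i}}$ agrees with $f^{-1}(\bigcup X_i)$ and hence $\bigcup s_{X_i}=s_{\bigcup X_i}$, and similarly $\bigcap s_{X_i}=s_{\bigcap X_i}$ (matching the operations of \Lr{comb}); and by the displayed identity $\sig$ preserves each measure $\nu_z$. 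Given two realisations $(\cn_1,\Sigma_1)$ and $(\cn_2,\Sigma_2)$, the composition $\sig_2^{-1}\circ\sig_1\colon \Sigma_1/{\sim}\to\Sigma_2/{\sim}$ is therefore a measure-algebra isomorphism compatible with all the $\nu_z$, which is precisely what ``cryptomorphic'' means here.

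I do not expect a genuine obstacle: the substantive work is already done in \Tr{thPB} and in \eqref{convfai}, and the rest is bookkeeping. The one point needing care is keeping the null-set ambiguities straight — in particular, faithfulness pins down the set $X$ associated to a sharp function only up to a $\nu$-null set, which is exactly why the bijection is stated between equivalence classes of $\Sigma$ rather than between sets; checking that all the identities above are insensitive to such modifications (using countable additivity to absorb countably many null sets in the union/intersection computations) is the only place where one must be slightly attentive.
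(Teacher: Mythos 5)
Your proposal is correct and follows essentially the same route as the paper, which derives the corollary by combining \Tr{thPB} (faithfulness of a realisation to every sharp harmonic function) with the correspondence $X\mapsto s_X$ and \eqref{convfai}; the paper leaves the bookkeeping implicit, and you have simply written it out, including the well-definedness, injectivity and surjectivity checks and the compatibility with the Boolean operations of \Lr{comb}.
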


\section{Proof of the \PBv\ criterion} \label{secPr}

We start by proving the easier direction of \Tr{thPB}, namely that if $\cn$ is a realisation of the Poisson boundary then it is faithful to every \shf\ $s$. For this, given $s$ let $\hat{s}\in L^\infty(\cn)$ be such that $s(z)= \int_\cn \hat{s}(\eta)d\nu_z(\eta)$ \fe\ $z$. 

We claim that $\hat{s}$ equals 0 or 1 almost everywhere on $\cn$. Indeed, since $f$ is measure preserving, it suffices to prove that $\mu(f^{-1}(X) \cup f^{-1}(Y))=1$, where  $X:= \{\eta \mid \hat{s}(\eta) =1\}$ and $Y:= \{\eta \mid \hat{s}(\eta) =0\}$.
For $\eps\in (0,1)$, let $X_\eps:= \{\eta \mid \hat{s}(\eta)\in [\eps,1-\eps]\}$, and define the function $s_\eps: V\to [0,1]$ by $z\mapsto \mu_z(f^{-1}(X_\eps))$. By \Lr{sharph} $s_\eps$ is harmonic and sharp. Now note that if $s_\eps(W^n)$ converges to 1 for some walk $W^n$, then $s(W^n)$ does not converge to 0 or 1. But as $s$ is sharp, this occurs with probability 0 for our \rw. Since $s_\eps$ is sharp, this implies that $s_\eps(W^n)$ converges to 0 for almost every \rw\ $W^n$. But now \Lzol\ \eqref{lzol} implies that $\mu_z(f^{-1}(X_\eps))=0$ \fe\ $z$. Since this holds \fe\ \eps, our claim follows.

As $\hat{s}$ equals 0 or 1 almost everywhere on $\cn$, we have $s(z) = \nu_z(X)$ by the choice of $\hat{s}$ and $X$. Thus \eqref{convfai} yields $\mu_z(1^s \sydi f^{-1}(X)) = 0$, which means that $\cn$ is faithful to $s$ as desired.

\subsection{Splitting \cn\ according to the values of $h$}

In this section collect some lemmas that will be useful in the proof of the other direction of \Tr{thPB}. The reader may choose to skip to \Sr{secMainProof} at this point and come back later.

We denote the set of bounded harmonic functions of \g by $BH(G)$.

\begin{lemma} \label{lab}
Let \g be a transient network. For every $h\in BH(G)$ and every $a<b\in \R$, the function $h_{[a,b)}$ defined by $z\mapsto \mu_{z}(\lim_n h(Z^n) \in [a,b))$ is harmonic and sharp.
\end{lemma}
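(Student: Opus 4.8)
The plan is to reduce the claim to the already-established toolkit for sharp functions, in particular Lemma~\ref{sharph}, which says that any function of the form $z\mapsto\mu_z(A)$ for a tail event $A$ is automatically harmonic and sharp. So the whole task is to exhibit $h_{[a,b)}$ as the probability of a tail event. The natural candidate is
\[
A:=\{W=(w_n)\in\cw \mid \lim_n h(w_n)\text{ exists and lies in }[a,b)\}.
\]
First I would check that $A$ is a well-defined tail event: since $h\in BH(G)$, the martingale $h(Z^n)$ is bounded, so by the bounded martingale convergence theorem $\lim_n h(Z^n)$ exists $\mu_z$-almost surely for every $z$; hence $A$ is (up to a null set) the event $\{\lim_n h(w_n)\in[a,b)\}$. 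It is a tail event because deleting the first step of $W$ does not change the tail behaviour of the sequence $h(w_n)$, i.e.\ $t(A)=A$. Measurability of $A$ with respect to $\Pi$ is routine: $A=\bigcap_{k}\bigcup_{N}\bigcap_{n\ge N}\{\,a-1/k< h(w_n)<b\,\}$ type expression, each $\{h(w_n)\in J\}$ being a cylinder-type set, so $A\in\Pi$.

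Having identified $A$, the lemma is immediate: $h_{[a,b)}(z)=\mu_z(A)$ by definition of $h_{[a,b)}$ (modulo the almost-sure existence of the limit, which lets us drop the ``$\lim$ exists'' qualifier), and then Lemma~\ref{sharph} tells us directly that $h_{[a,b)}$ is harmonic and sharp. Alternatively, one could verify harmonicity by hand from the Markov property --- conditioning on the first step $Z^1=y$ gives $h_{[a,b)}(z)=\sum_{y}p_{z\to y}\,\mu_y(A)=\sum_y p_{z\to y}h_{[a,b)}(y)$, using $t(A)=A$ --- and sharpness from \eqref{lzol}, which says $\mu_{Z^n}(A)\to \mathbb 1_A$ almost surely, so the values $h_{[a,b)}(Z^n)$ converge to $0$ or $1$ a.s.; this is exactly the argument already packaged inside Lemma~\ref{sharph}.

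I do not anticipate a genuine obstacle here; this lemma is essentially a bookkeeping corollary of Lemma~\ref{sharph}. The only point requiring a little care is the half-open interval $[a,b)$: one must make sure the event ``$\lim\in[a,b)$'' is measurable and tail, which it is, and one should note that the boundary values $a$ and $b$ are harmless because the statement only asserts harmonicity and sharpness of the resulting probability, not continuity in $a,b$. A secondary small subtlety is that transience (or at least irreducibility, as assumed in Section~\ref{secSh}) is what guarantees ``sharp'' is independent of the starting vertex, so that speaking of \emph{the} sharp function $h_{[a,b)}$ on all of $V$ is legitimate; this is already built into the ambient hypotheses.
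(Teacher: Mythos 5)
Your proposal is correct and follows exactly the paper's route: the paper's proof also consists of noting that $\{\lim_n h(Z^n)\in[a,b)\}$ is a measurable tail event and then invoking Lemma~\ref{sharph}. Your additional remarks on measurability and on the half-open endpoints are just the details the paper leaves as ``easy to check.''
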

\begin{proof}
To begin with, it is easy to check that the event $\{\lim_n h(Z^n) \in [a,b)\}$ is measurable. The assertion now follows from \Lr{sharph}.
\end{proof}

\begin{definition} \label{defab}
Let $(\cn,(\nu_z)_{z\in V})$ be an $M$-boundary that is faithful to every sharp harmonic function, and let $h\in BH(G)$. Recall that \fe\ measurable $X\subseteq \cn$, \ti\ a sharp function $s_X$ \st\ $f^{-1}(X)= 1^{s_X} \sydi \Phi$ where $\Phi$ is a null-set in path space \eqref{convfai}. Given a bounded interval $[a,b)\subset \R$, we define the \shf\ $y=y_{[a,b)}:= s_X \cap h_{[a,b)}$ where $h_{[a,b)}$ is as in \Lr{lab} and intersection as in \Lr{comb}. Since $\cn$ is faithful to $y$, \ti\ a set $Y\subseteq \cn$ \st\ $\mu(1^y \sydi f^{-1}(Y))=0$, and we let $X\restr_{[a,b)}$ denote such a set $Y$. By \Cr{hprob} we have
\labtequ{ref1}{$\nu_z(Y)= \mu_z(f^{-1}(Y)) = \mu_z(1^y) = y(z)$}

\end{definition}

\begin{lemma} \label{refine}
Let $(\cn,(\nu_z)_{z\in V})$ be an $M$-boundary that is faithful to every sharp harmonic function, and let $h\in BH(G)$. Let $a_0 < a_1 <\dots a_k < a_{k+1} \in \R$ be points such that the range of $h$ is contained in $(a_0,a_{k+1})$, then $\nu_z(X)= \sum_{0\leq i \leq k} \nu_z(X\restr_{[a_i,a_{i+1})})$ (hence $X$ equals $\bigcup X\restr_{[a_i,a_{i+1})}$ up to a null-set).
\end{lemma}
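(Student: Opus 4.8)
The plan is to push everything into path space via $f^{-1}$, to identify the ``$1$-events'' of the sharp functions appearing in \Dr{defab} with concrete tail events using the zero--one law \eqref{lzol}, and then to exploit that the intervals $[a_i,a_{i+1})$, $0\le i\le k$, partition $[a_0,a_{k+1})$. The principle I would use repeatedly is this: if a sharp function $s$ equals $z\mapsto\mu_z(A)$ for a tail event $A$, then $1^s=A$ up to a $\mu$-null set, since by \eqref{lzol} $s(Z^n)=\mu_{Z^n}(A)\to{\mathbb 1}_A$ almost surely, hence $\{\lim_n s(Z^n)=1\}=A$ off a null set.

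Writing $E_i:=\{\lim_n h(Z^n)\in[a_i,a_{i+1})\}$, I would record three such identifications, each up to a null set and for all $z$: (a) $f^{-1}(X)=1^{s_X}$, which is \eqref{convfai}; (b) $1^{h_{[a_i,a_{i+1})}}=E_i$, from the principle above applied to the tail event $E_i$ (recall $h_{[a_i,a_{i+1})}(z)=\mu_z(E_i)$ by \Lr{lab}); and (c) since by the definition of intersection preceding \Lr{comb} the sharp function $y_{[a_i,a_{i+1})}=s_X\cap h_{[a_i,a_{i+1})}$ satisfies $y_{[a_i,a_{i+1})}(z)=\mu_z\big(1^{s_X}\cap 1^{h_{[a_i,a_{i+1})}}\big)$, the principle again gives $1^{y_{[a_i,a_{i+1})}}=1^{s_X}\cap 1^{h_{[a_i,a_{i+1})}}=f^{-1}(X)\cap E_i$. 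Feeding this into \eqref{ref1} and using that $f$ is measure preserving yields, for every $z$,
\[
\nu_z\!\left(X\restr_{[a_i,a_{i+1})}\right)=y_{[a_i,a_{i+1})}(z)=\mu_z\!\left(f^{-1}(X)\cap E_i\right).
\]

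Summing over $i$ is then immediate: the intervals $[a_i,a_{i+1})$ are pairwise disjoint, hence so are the $E_i$, so $\sum_{i=0}^{k}\nu_z(X\restr_{[a_i,a_{i+1})})=\mu_z\big(f^{-1}(X)\cap\bigcup_{i=0}^{k}E_i\big)$. Now $\bigcup_{i=0}^{k}E_i=\{\lim_n h(Z^n)\in[a_0,a_{k+1})\}$ has $\mu_z$-probability $1$: the limit exists almost surely by the bounded martingale convergence theorem, and it lies in $[\inf_v h(v),\sup_v h(v)]\subseteq(a_0,a_{k+1})$ since every $h(Z^n)$ does. Hence $\mu_z\big(f^{-1}(X)\cap\bigcup_i E_i\big)=\mu_z(f^{-1}(X))=\nu_z(X)$, which is the claimed identity. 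For the parenthetical statement, the same three identifications show that the $X\restr_{[a_i,a_{i+1})}$ correspond under $f^{-1}$, up to null sets, to the pairwise disjoint subsets $f^{-1}(X)\cap E_i$ of $f^{-1}(X)$; so they are pairwise disjoint and contained in $X$ up to $\nu$-null sets, and together with the displayed identity this forces $X=\bigcup_{i=0}^k X\restr_{[a_i,a_{i+1})}$ up to a $\nu$-null set.

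The main thing to be careful about is the accounting of the ``up to a null set'' clauses throughout --- in particular that \eqref{lzol} really does identify each $1$-event with its tail event, and that no trajectory limit of $h$ escapes $[a_0,a_{k+1})$, which is precisely where the hypothesis on the range of $h$ enters. Once these identifications are in place the combinatorics of the sum is trivial.
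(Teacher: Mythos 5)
Your proof is correct and follows essentially the same route as the paper's: both use \eqref{ref1} to rewrite $\nu_z(X\restr_{[a_i,a_{i+1})})$ as $y_{[a_i,a_{i+1})}(z)$, identify $1^{h_{[a_i,a_{i+1})}}$ with the tail event $\{\lim_n h(Z^n)\in[a_i,a_{i+1})\}$ via L\'evy's zero--one law \eqref{lzol}, and then sum over the partition of $[a_0,a_{k+1})$ and invoke \eqref{convfai} together with measure preservation. Your explicit statement of the ``$1^s=A$ up to a null set'' principle and your handling of the parenthetical claim are just slightly more detailed renderings of what the paper does.
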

\begin{proof}
%


By \eqref{ref1} we have $\sum \nu_z(X\restr_{[a_i,a_{i+1})})= \sum_i y_{[a_i,a_{i+1})}(z)$.
By our definitions, we have 
\begin{align*}
y_{[a_i,a_{i+1})}(z)= s(z) \cap h_{[a_i,a_{i+1})} =\mu_z(1^s \cap 1^{h_{[a_i,a_{i+1})}})=\\ 
\mu_z(1^s \cap \{\lim_n h_{[a_i,a_{i+1})}(W^n)= 1\})
\end{align*}
where $s(\cdot):=\nu_\cdot(X)$. 
Now recall that $h_{[a_i,a_{i+1})}$ is a probability, namely of the event that \rw\ $W'^m$ from (the random vertex) $W^n$ will have its $h$ values converge to $[a_i,a_{i+1})$. Now note that the distribution of $W'^m$ is the same as that of the continuation of $W_n$ after the $n$th step. This means that $h_{[a_i,a_{i+1})}(W^n)$ equals the probability that $W^n$ itself displays this behaviour. Applying \Lzol\ \eqref{lzol} to the latter probability, we can thus deduce that the last expression above equals
$$\mu_z(1^s \cap \{\lim_n h(W^n) \in [a_i,a_{i+1})\}).$$
Plugging this into the above sum, we obtain 
$$\sum \nu_z(X\restr_{[a_i,a_{i+1})})= \sum_i \mu_z(1^s \cap \{\lim_n h(W^n) \in [a_i,a_{i+1})\}).$$

The latter sum however equals $\mu_{z}(1^s)$ since, by the  bounded martingale convergence theorem,  with probability 1 exactly one of the events $\{\lim_n h(W^n) \in [a_i,a_{i+1})\}$ occurs. Finally, recall that $\mu_{z}(1^s)= \mu_{z}(f^{-1}(X))= \nu_z(X)$ by \eqref{convfai} and the fact that $f$ is measure preserving.
\end{proof}

\subsection{Main proof of \Tr{thPB}} \label{secMainProof}

We proceed with the proof of the backward direction of \Tr{thPB}, that if $\cn$ is faithful to every \shf\ then it is a realisation of the Poisson boundary.

Let us start with the easier assertion we have to prove, namely that \fe\ $\hat{h}\in L^\infty(\cn)$ the function $f:V\to \R$ defined by $z\mapsto \int_\cn \hat{h}(\eta)d\nu_z(\eta)$ is bounded and harmonic.

It is immediate from its definition that the range of $f$ is contained in the range of $\hat{h}$, and so $f$ is bounded. The fact that $f$ is harmonic follows easily from \eqref{harmnu}.

\medskip
Next, we prove that \fe\ $h\in BH(G)$ \ti\ $\hat{h}: \cn \to \unin$ \st\ \fe\ $z\in V$ we have $h(z) = \int_\cn \hat{h}(\eta)d\nu_z(\eta)$, which is the core of \Tr{thPB}. Assume \obda\ that the range of $h$ is the interval $[0,1]$.

\medskip
Recall that \fe\ $0<a<b<1$ and any measurable $X\subseteq \cn$, we can define the measurable set $X \restr_{[a,b)}$ (\Dr{defab}). 
Using this we can,
\fe\ $z\in V$, and any measurable $X\subseteq \cn$, induce a measure $\nu_z^X$ on \unin\ by letting $\nu_z^X(I)=\nu_z(X \restr_I)$ \fe\ subinterval $I$ of \unin\ and extending to all Borel subsets of $[0,1]$ using Caratheodory's extension theorem. Now let
\labequ{H}{H_z(X):= \int_{\unin} a \nu_z^X(da).}
\note{$H_z(X)$ is the exp'on of $h$ subject to $X$.}
\medskip
It is straightforward to check that $H_z$ is a measure on \cn. Easily, $H_z$ is  uniformly continuous to $\nu_z$. Thus we can let 
$$R_z(\eta) = \frac{\partial H_z}{\partial \nu_z} (\eta)$$
be the corresponding Radon-Nikodym derivative. The range of $R_z$ is contained (up to a null-set) in the closure of the range of $h$. Thus $R_z \in L^\infty(\cn)$.

Recall that we would like to find $\hat{h}: \cn \to \unin$ \st\ \fe\ $z\in V$ we have $h(z) = \int_\cn \hat{h}(\eta)d\nu_z(\eta)$. Thus it suffices to prove the following two claims. 

\medskip
{\bf Claim 1:} \Fe\ $z,o$ we have $R_z(\eta)=R_o(\eta)$ for almost every $\eta$.

\medskip
This allows us to define $\hat{h}:= R_o$ for a fixed $o\in V$. Recall that $R_o \in L^\infty(\cn)$.

\medskip
{\bf Claim 2:} \Fe\ $z\in V$ we have $h(z) = \int_\cn dH_z = \int_\cn R_z(\eta) d\nu_z(\eta)= \int_\cn \hat{h}(\eta) d\nu_z(\eta)$.
\medskip

To prove Claim 1, suppose to the contrary that \ti\ $X\subseteq \cn$ of positive measure \st\ $R_z(\eta)>R_o(\eta)+\eps$ for some $\eps>0$ and every $\eta\in X$. By \Lr{refine}, we can decompose $\cn$ into a union $\bigcup_{0\leq i\leq k} \cn\restr_{[a_i,a_{i+1})}$ of measurable subsets, with $a_0=0$ and $a_{k+1}=1$, where we are free to choose the $a_i$ as we wish. So let us choose them in such a way that $a_{i+1}-a_i<\eps$ \fe\ $i$.

Now note that, by the definition of $H_z$, we have $\frac{H_z(\cn\restr_{[a_i,a_{i+1})})}{\nu_z(\cn\restr_{[a_i,a_{i+1})})} \in [a_i,a_{i+1})$; indeed, if $I\cap J=\emptyset$ then $\nu((\cn\restr_I)\restr_J)=0$, and so $\nu_z^{\cn\restr_{[a_i,a_{i+1})}}$ is supported on $[a_i,a_{i+1})$. Even more, if $Y$ is any measurable subset of $\cn\restr_{[a_i,a_{i+1})}$ of positive measure, we also have $\frac{H_z(Y)}{\nu_z(Y)} \in [a_i,a_{i+1})$. Thus $R_z(\eta)\in [a_i,a_{i+1})$ for almost every $\eta\in \cn\restr_{[a_i,a_{i+1})}$. But as this holds \fe\ $z$, and $a_{i+1}-a_i<\eps$, this means that $|R_z(\eta)-R_o(\eta)|<\eps$ for almost every $\eta\in \cn$, contradicting the existence of $X$ as defined above. This proves Claim 1.

\medskip
To prove Claim 2, we have to show that $h(z)= \int_\cn dH_z = H_z(\cn) :=\int_{\unin} a \nu_z^\cn(da)$. Since $h$ is harmonic, we have  $h(z)= \int_{b\in V} h(b) \mu_z^n(b)$ \fe\ $n$, where $\mu_z^n$ denotes the distribution of the $n$th step of \rw\ from $z$. Easily, the latter sum equals $ \int_{\unin} a \mu_z^n(da)$ by a double-counting argument, where, with a slight abuse of notation, we 
treat $\mu_z^n$ as a probability measure on \unin\ by making the convention $\mu_z^n(da):= \mu_z^n(\{b\in V \mid h(b)\in da\})$. 
Comparing the latter integral with the one above, we see that it suffices to find a family $\ci$ of intervals of \unin\ that is a basis for its topology and $\lim \mu_z^n(da) = \nu_z^\cn(da)$ \fe\ interval $da \in \ci$.

For this, call a number $a\in \R$  $h$-singular, if $\mu_o(\lim_n h(Z_n)=a)>0$. Let $\ci$ be the family of intervals contained in \unin\ the endpoints of which are not $h$-singular. Since \ta\ at most countably many 
$h$-singular points, $\ci$ is clearly a basis of \unin.

To see that \fe\ $da \in \ci$ we have $\lim \mu_z^n(da) = \nu_z^\cn(da)$, recall that $\nu_z^\cn(da)= \nu_z(\cn \restr da)= \mu_z(\{\lim f(W^n)\in da\})$ where we used \eqref{ref1}, and note that $\mu_z(\{\lim f(W^n)\in da\})\leq \liminf_n \mu_z^n(da)$ because, as the endpoints of $da$ are not $h$-singular, subject to $\{\lim f(W^n)\in da\}$ our random walk almost surely visits $da=\{b\in B^n \mid h(b)\in da\}$ for almost every $n$. We claim that, conversely,  $\nu_z^\cn(da)\geq \limsup_n \mu_z^n(da)$. To see this, note that $\mu_z^n(da)+\mu_z^n(da^c)=1$, where $da^c$ is the complement $\unin\sm da$ of $da$, because  $\mu_z^n$ is a probability measure. By \Lr{refine} we have $\nu_z^\cn(da)+\nu_z^\cn(da^c)=1$ as well. Thus, applying the above arguments to $da^c$ instead of $da$, which yields $\nu_z^\cn(da^c) \leq \liminf_n \mu_z^n(da^c)$, we conclude that $\nu_z^\cn(da)\geq \limsup_n \mu_z^n(da)$ as claimed. This means that $\lim \mu_z^n(da)$ exists and equals $\nu_z^\cn(da)$. This proves Claim 2, completing the proof of the existence of the desired function $\hat{h}$.


\medskip
It remains to check that $\hat{h}$ is unique up to modification on a null-set. If this is not the case, then  \ti\ another candidate $\hat{h'}$ \st, \fe\ $z\in V$, we have 
%
$$h(z) = \int_\cn \hat{h}(\eta)d\nu_z(\eta)= \int_\cn \hat{h'}(\eta)d\nu_z(\eta).$$
Define the function  $\hat{k}(\eta):= \hat{h}(\eta)-\hat{h'}(\eta)$ on $\cn$, and note that\\  
$k(z):= \int_\cn \hat{k}(\eta)d\nu_z(\eta)=0$ \fe\ $z$.
Now if $\hat{h}$ does not coincide with $\hat{h'}$ $\nu$-almost everywhere, \ti\ some $\eps>0$ \st\ the measurable set $X:= \{ \eta\in \cn \mid \hat{k}(\eta)> \eps \}$ is not a null-set. 

Let $s_X(z):= \nu_z(X)=\mu_z(f^{-1}(X))$, which is a \shf\ by \Lr{sharph}. Thus, by \Cr{close1}, \ti\ $x\in V$ with $s_X(x)=\nu_x(X)> 1-\eps'$ for any $\eps'>0$ we choose. But this means that $k(x) > \eps (1-\eps') - \eps \inf \hat{k}$ by the choice of $X$. Choosing $\eps'$ large enough compared to $\eps \inf \hat{k}$, we obtain a contradiction to $k=0$ that completes the proof.

\section{Construction of the tiling} \label{constr}

In this section we show how any plane transient graph can be associated with a tiling of the cylinder $K := \R/\Z \times [0,1]$ with squares, or rectangles if the edges of \g have various resistances. Our construction follows the lines of \cite{BeSchrST}, but we will be pointing out many properties of this tiling that we will need later. Thus this section could be useful to the reader already acquainted with \cite{BeSchrST}.

\subsection{The Random Walk flow} \label{secRWF}

Fix a vertex $o\in V$ and \fe\ vertex $v\in V$ let $h(v)$ be the probability $p_v(o)$ that random walk from $v$ will ever reach $o$. Thus $h(o)=1$. We will use $h(v)$ as the `height' coordinate of $v$ in the construction of the \squt\ in the next section.

Recall that the \defi{Green function} $G(x,y)$ is defined as the expected number of visits to $y$ by \rw\ from $x$. Let 
$$h'(v):= \frac{\pi_o G(o,v)}{G(o,o) \pi_v} ,$$
where as usual $\pi_x:= \sum_{y\in  N(x)} c(xy)$.

We claim that 
\labtequ{hh}{$h'(v)=h(v)$.}
Indeed this is a consequence of the reversibility of our \rw: it is well-known, and not hard to prove (see \cite[Exercise~2.1]{LyonsBook}), that $\pi_v G(v,o) = \pi_o G(o,v)$. Observing that $G(v,o)= p_v(o) G(o,o)$ now immediately yields \eqref{hh}.

It is no loss of generality to assume that the constant $\frac{\pi_o}{G(o,o)}$ appearing in the definition of $h'$ equals 1: multiplying the conductances $c$ by a constant does not affect the behaviour of our random walk, and hence $G(o,o)$, and so we can achieve $\pi_o=G(o,o)$ by multiplying with the appropriate constant. Thus, from now on we can assume that
\labtequ{defh}{$h(v) = \frac{G(o,v)}{\pi_v} =\frac1{\pi_v} \ExI{\text{\# of visits to $v$ by \rw\ from $o$}}$.}
A \defi{directed edge} $\ded{xy}$ of \g is an ordered pair $(x,y)$ of vertices \st\ $\{x,y\}$ is an edge of $G$. Define the \defi{\rw\ flow} to be the function $w(\are)$ on the set of directed edges $\are$ of \g equaling the expected net number of traversals of $\are$ by \rw\ from $o$:
$$w(\ded{xy}):= G(o,x) p_{x\to y} - G(o,y) p_{y\to x}.$$
Since our \rw\ is transient, $w(\are)$ is always finite.
Note that, by \eqref{defh} and \eqref{trpr}, we have 
\labtequ{wch}{$w(\ded{xy})= c(xy)(h(x) - h(y))$.}
In electrical network terminology, \eqref{wch} says that the pair $h,w$ satisfies Ohm's law. Thus $w$ is `antisymmetric', i.e.\ $w(\ded{xy})= -w(\ded{yx})$. Moreover, $w$ is a \defi{flow} from $o$ to infinity, by which we mean that it satisfies the following conservation condition, known as \knl, at every vertex $x$ other than $o$:
\labtequ{k1}{$w^*(x):= \sum_{y\in  N(x)} w(\ded{xy}) = 0$,}
where $N(x)$ is the set of neighbours of $x$. This is equivalent to saying that $h$ is harmonic at every vertex except $o$.

The following fact can be found in \cite[Exercise~2.87]{LyonsBook}. 
\begin{lemma} \label{hto0}
For almost every \rw\ $(Z^n)$, we have $\lim_n h(Z^n)=0$.
\end{lemma}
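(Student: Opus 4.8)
The plan is to extract from transience the only property of the sample path that we need --- that the walk returns to $o$ only finitely often --- and to translate it into a statement about the numbers $h(Z^n)$ by combining the Markov property with the martingale convergence theorem (\Lzol, recorded in \eqref{lzol}).

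I would set up the following notation: for $k\in\N$ put $A_k:=\{Z^m=o\text{ for some }m\ge k\}$ and let $\mathcal F_n$ be the $\sigma$-algebra generated by the first $n$ steps of the walk. The events $A_k$ decrease in $k$ with $\bigcap_k A_k=A_\infty:=\{Z^m=o\text{ for infinitely many }m\}$; since the walk is transient, $o$ is visited only finitely often almost surely, so $A_\infty$ is null and $\mathbb 1_{A_k}\downarrow 0$ almost surely. The second ingredient is the identity
$$\Pr(A_n\mid\mathcal F_n)=h(Z^n)\qquad\text{for every }n,$$
which is immediate from the Markov property: conditional on $\mathcal F_n$, the probability that the walk hits $o$ at some time $\ge n$ equals $p_{Z^n}(o)=h(Z^n)$ when $Z^n\neq o$, while both sides equal $1=h(o)$ when $Z^n=o$.

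Now I would combine the two ingredients. Fix $k\in\N$. For every $n\ge k$ we have $A_n\subseteq A_k$, hence $h(Z^n)=\Pr(A_n\mid\mathcal F_n)\le\Pr(A_k\mid\mathcal F_n)$, and letting $n\to\infty$ the right-hand side converges almost surely to $\mathbb 1_{A_k}$ by the martingale convergence theorem. Thus $\limsup_n h(Z^n)\le\mathbb 1_{A_k}$ almost surely. Intersecting these countably many almost-sure inequalities over $k\in\N$ yields $\limsup_n h(Z^n)\le\inf_k\mathbb 1_{A_k}=\mathbb 1_{A_\infty}=0$ almost surely, and since $h\ge 0$ this gives $\lim_n h(Z^n)=0$ almost surely.

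The one place that needs care is this last step: one cannot feed the moving sequence $(A_n)$ into \Lzol\ directly, so one first bounds $h(Z^n)=\Pr(A_n\mid\mathcal F_n)$ by $\Pr(A_k\mid\mathcal F_n)$ for a fixed event $A_k$ and only afterwards lets $k\to\infty$. I do not expect any other difficulty. (One may also note in passing that $h$ is superharmonic --- harmonic off $o$, with $h(o)=1>\Pr_o(\text{return to }o)=\sum_y p_{o\to y}h(y)$ by transience --- so that $h(Z^n)$ is automatically a bounded nonnegative supermartingale and converges almost surely; but identifying its limit as $0$ still requires the argument above, so this observation does not shorten anything.)
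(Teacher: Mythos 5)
Your argument is correct. The key identity $\Pr(A_n\mid\mathcal F_n)=h(Z^n)$ is right (including the boundary case $Z^n=o$, where both sides are $1$), the monotone sandwiching $h(Z^n)\le\Pr(A_k\mid\mathcal F_n)\to\mathbb 1_{A_k}$ for fixed $k$ correctly avoids the trap of applying L\'evy's law to the moving target $A_n$, and transience does give $\mathbb 1_{A_\infty}=0$ almost surely since an irreducible transient chain visits $o$ only finitely often. For comparison: the paper does not prove this lemma at all --- it cites it as an exercise in Lyons--Peres, and the only trace of an argument in the source is a commented-out remark that $h(Z^n)$ converges almost surely because $h$ is a bounded supermartingale (which, as you note yourself, establishes existence of the limit but not that the limit is $0$). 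So your write-up supplies a complete, self-contained proof where the paper defers to a reference; the route via $\Pr(A_k\mid\mathcal F_n)\to\mathbb 1_{A_k}$ is the standard way to finish, and I see no gap in it.
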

	\comment{\begin{proof}
By Doob's first martingale convergence theorem, $\lim_n h(Z^n)$ exists almost surely as $h$ is a bounded supermartingale. 

	\end{proof}
}

Note that up to now we did not use the planarity of \G, so all above statements hold for an arbitrarily transient graph.

\subsection{The dual graph $G^*$} \label{secDual}

Let $G=(V,E)$ be a planar graph, and fix a proper embedding of \g into the plane $\R^2$. Our graph can now be considered as a \defi{plane graph}, in other words, $V$ is now a subset of $\R^2$ and $E$ a set of arcs in $\R^2$ each joining two points in $V$. It is a standard fact that one can associate with \g a further plane graph $G^*= (V^*,E^*)$, called the (geometric) \defi{dual} of \G, having the following properties: 
\begin{enumerate}
 \item Every face of \g contains precisely one vertex of $G^*$ and vice versa;
\item There is a bijection $e\mapsto e^*$ from $E$ to $E^*$ \st\ $e\cap G^*= e^*\cap G$ consists of precisely one point, namely a point at which the edges $e,e^*$ meet.
\end{enumerate}
Note that if $G^*$ is a geometric dual of \g then \g is a geometric dual of $G^*$, but we will not need this fact.  For example, the geometric dual of a hexagonal lattice is a triangular lattice with 6 triangles meeting at every vertex. See \cite{diestelBook05} for more details on dual graphs.

The orientability of the plane allows us to extend the bijection $e\mapsto e^*$ to a bijection between the directed edges of $G$ and $G^*$ in such a way that if $C$ is a directed cycle of \g and $\arE(C)$ its set of directed edges, then $\{\are^* \mid \are\in \arE(C) \}$ is a \defi{directed cut}, that is, it coincides with the set of edges from a subset $A$ of $V$ to $V\sm A$, directed from the endvertex in $A$ to the endvertex in $V\sm A$ (where $A$ is the set of vertices of $G^*$ contained in one side of $C$).


\subsection{The Tiling} \label{stil}	

Given a plane, transient, and {uniquely absorbing} graph $G$, we now construct a tiling of the cylinder $K := \R/\Z \times [0,1]$ by associating to each edge of $N$ a rectangle $R_e\subseteq K$, with sides parallel to the boundary of $K$.  

In order to specify $R_e$ we will use four real coordinates, corresponding to the endvertices $x,y$ of $e$ in $G$ and the endvertices $x',y'$ of $e^*$ in the dual $G^*$: two of these coordinates will be the values $h(x),h(y)$, where  
$h$ is the function from \Sr{secRWF} defined by means of the random walk flow; these values will be used as `height' coordinates. We now specify a `width' function $w$ on the vertices of $G^*$ to be used for the other coordinates.

Fix an arbitrary vertex $\zeta$ of $G^*$ and set $w(\zeta)=0$. For every other vertex $z\in V(G^*)$, pick a \pth{z}{\zeta}\ $P_z= z_0 z_1 \ldots z_k$, where $z_0=z$ and $z_k=\zeta$, and let $w(z)= \sum_{i<k} w(\ded{z_i z_{i+1}}^*) \mod 1$, where $\ded{z_i z_{i+1}}^*$ denotes the directed edge $\are$ of \g such that $\are^*= \ded{z_i z_{i+1}}$ (recall the remark on orientability at the end of the previous section).

This value $w(z)$ does not depend on the choice of $P_z$, but only on the endpoint $z$; this fact is a consequence of a well-known duality between \knl\ on a plane network \g and \kcl\ on its dual $G^*$. To be more precise, if $C= \ded{e_1} \ded{e_2} \ldots \ded{e_k}$ is a directed cycle in $G^*$ \st\ the set of vertices $U$ of \g contained in one of the sides of $C$ is finite, then $\sum {w(\ded{e_i})} = \sum_{x\in U} w^*(x)$. Using this, and the fact that \g is uniquely absorbing, it is not hard to check that the latter sum equals a multiple $k \eta$ of the total flow $\eta:= w^*(o)$ out of $o$, where $k$ is the `winding number' of $C$ around $o$; see \cite[Lemma~3.2]{BeSchrST} for a detailed proof\note{uniquely absorbing has to be used to make sure that the cut defined by a height parallel cycle  is a closed walk in $G^*$ rather that a union of such. It is not needed for finiteness of the cut}. As we are assuming that $\eta=1$, our definition of $w(z)$ does indeed not depend on  the choice of $P_z$.
\medskip

Having defined $w: V(G^*) \to \R / \Z$, we can now specify the rectangle $R_e$ corresponding to an edge $e$ as above in our tiling: $R_e$ is one of the two rectangles in $K$ bounded between the horizontal lines $h=h(x)$ and $h=h(y)$ and the perpendicular lines $w(x')$ and $w(y')$. To decide which of the two, orient $e$ from its endvertex of lower $h$ value into the one with higher value, recall that this induces an orientation of $e^*$, and choose that rectangle in which the $w$ values increase as we move from the initial vertex of $e^*$ to the terminal vertex inside the rectangle\note{check this}.

\medskip
This completes the definition of the tiling, which from now on we denote by $T= T_{G,o}$. Formally, $T$ can be defined as a function on $E$ mapping each $e$ to a rectangle $R_e\subset K$, but it will be more convenient in the sequel to assume $T$ to be a function on \G, seen as a metric space (recall the discussion in \Sr{prelim}), mapping any interior point $p$ of an edge $e$ to the maximal horizontal interval at height $h(p)$ contained $R_e$, and any vertex $x$ to the maximal horizontal interval at height $h(p)$ contained in $\bigcup_{x\in e} R_e$. 


Note that $T(G)$ does not meet the base $\cc := \R/\Z \times \{0\}$ of our cylinder $K$. It is the main aim of this paper to show that $\cc$, to be thought of as the \defi{boundary} of $K$, is a realisation of the Poisson boundary of \G.

Let us point out some further properties of our tiling $T$.
By \eqref{wch}, the aspect ratio of $R_e$ equals the conductance of $e$:
\labtequ{ar}{$\frac{w(e)}{dh(e)} = c(e)$.}
In particular, if $c$ is identically $1$ then we obtain a square tiling.

Let $\arE(x)$ be the set of directed edges emanating from vertex $x$. Let $E^+(x)\subseteq \arE(x)$ be the set of directed edges $\ded{xy}$ with $h(y)\geq h(x)$ and $E^-(x)$ be the set of directed edges $\ded{xy}$ with $h(y) < h(x)$. Note that the \rw\ flow flows into $x$ along the edges in $E^+(x)$ and out of $x$ along the edges in $E^-(x)$.

For a vertex $x$, define $w(x)$ to be the \defi{width} of its image in $T$; that is, 
$w(x):= \sum_{\are\in E^-(x)} w(\are) = \sum_{\are\in E^+(x)} w(\are) = 1/2 \sum_{\are \in E(x)} |w(\are)|$ ($w(x)$ should not be confused with $w^*(x)$). By the definitions, we have
\labtequ{wv}{$w(x)= \ExI{\text{net \# of particles arriving to $x$ from above} } $.}

\note{
\begin{lemma}\label{finfcut}
For every $h>0$ there are only finitely many vertices $x$ with $h(x)\geq h$ and $w(x)>0$.
\end{lemma}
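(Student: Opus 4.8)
The plan is to argue by contradiction, using the geometry of the tiling together with the finiteness of horizontal cuts. Suppose there are infinitely many vertices $x_1,x_2,\dots$ with $h(x_i)\ge h$ and $w(x_i)>0$. Since $w(x_i)=\sum_{\are\in E^-(x_i)}w(\are)>0$, each $x_i$ has an incident edge $e_i$ with $w(e_i)>0$, so $R_{e_i}$ is a genuine (non-degenerate) rectangle whose closure meets $T(x_i)\subseteq \R/\Z\times[h,1]$. Choosing $p_i\in T(x_i)$, the points $p_i$ all lie in the compact set $\R/\Z\times[h,1]\subseteq K\sm\cc$, so they accumulate at some $q$ of height $\ge h>0$. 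The aim is to show that infinitely many of the distinct tiles $R_{e_i}$ must then cluster around $q$, contradicting the fact that the tiling is locally finite away from the base $\cc$ (in a neighbourhood of $q$ only finitely many tiles occur, and each such tile has at most two endpoints among the $x_i$).

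For the ingredients: first I would record that for every level $t\in(0,1)$ the horizontal cut $F_t:=\{xy\in E: h(x)<t<h(y)\}$ is finite --- this is the cut whose dual is discussed in \Sr{stil}, and geometrically it indexes precisely the tiles met by the circle $\R/\Z\times\{t\}$, whose widths sum to $1$ by \Or{obsbm}. From this, $U_t:=\{v: h(v)>t\}$ has finite edge-boundary, the restriction of the \rw\ flow $w$ to $U_t$ is a flow of strength $1$ from $o$ into the finitely many lower endpoints of $F_t$, and this flow is acyclic, since $w(\are)>0$ forces $h$ to strictly decrease along $\are$. The heart of the matter is to rule out a descending flow ray: an infinite path $v_0v_1v_2\dots$ with $w(\ded{v_jv_{j+1}})>0$ for all $j$ and $\inf_j h(v_j)\ge h$. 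Here \Lr{hto0} should be used: along such a ray either $w(\ded{v_jv_{j+1}})$ stays bounded away from $0$ infinitely often, so that $\sum_j(h(v_j)-h(v_{j+1}))\ge c_0\sum_j 1/c(v_jv_{j+1})$ forces the ray to have finite resistance and the positive net flow pushed down it yields a positive probability that \rw\ from $o$ follows it to infinity, whence $h(Z^n)\not\to 0$, contradicting \Lr{hto0}; or $w(\ded{v_jv_{j+1}})\to 0$, in which case one must follow the flow that leaks off the ray at the $v_j$ and produce, by another route, an escape along which $h$ stays $\ge h$. Once no such ray survives, every positive-flow vertex of $U_h$ is joined to an edge of $F_h$ by a finite descending flow path, and combining this with acyclicity and planarity of the flow support inside $U_h$ should bound the number of positive-flow vertices above level $h$, yielding the accumulation contradiction above.

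The step I expect to be the main obstacle is precisely the exclusion of a descending flow ray above height $h$, and more broadly the passage from ``finite horizontal cut'' to ``finitely many flow-carriers above level $h$'': a finite cut by itself does not bound the number of vertices through which a flow of strength $1$ may thread in the (possibly infinite) region $U_h$, so the global input $h(Z^n)\to 0$ of \Lr{hto0}, and with it the uniquely absorbing hypothesis, has to be used in an essential way; making the ``escape along the ray'' estimate rigorous, especially in the leaking case $w(\ded{v_jv_{j+1}})\to 0$, is the delicate point on which the whole argument hinges.
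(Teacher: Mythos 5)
Before anything else, a caveat about the comparison: in the source this lemma sits inside a disabled note macro and its proof environment is empty, so the paper supplies no argument of its own and never uses the statement; your proposal has to be judged on its own merits. The skeleton you propose --- (a) the edges crossing a fixed height form a finite cut, (b) there is no infinite descending positive-flow path staying at height at least $h$, and (c) a K\"onig-type count from the finitely many exit edges using bounded degree and acyclicity of the flow --- is a plausible architecture, but both load-bearing steps (a) and (b) are missing, and the justification you offer for (a) is fallacious.

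Concretely: the tiles met by the circle $\R/\Z\times\{t\}$ are disjoint and have widths summing to at most $1$ (\Or{obsbm}, \Lr{wn}), but this is perfectly compatible with there being infinitely many of them (widths $2^{-k}$, say), so finiteness of $F_t$ does not follow; passing from ``total width bounded'' to ``finitely many'' is exactly the difficulty of the lemma itself, so at this point you have assumed a statement of essentially the same strength as the one to be proved. For (b), the inference ``positive net flow down a finite-resistance ray gives positive probability that \rw\ from $o$ follows that ray to infinity'' is not valid as stated --- a unit flow supported on a subnetwork does not localise the escape route of the walk to that subnetwork --- and in any case this branch of your dichotomy is vacuous under the standing hypotheses: along a descending ray above height $h$ the increments $h(v_j)-h(v_{j+1})$ telescope to at most $1-h$ and hence tend to $0$, so with bounded $\pi$ (hence bounded conductances) $w(\ded{v_jv_{j+1}})=c(v_jv_{j+1})(h(v_j)-h(v_{j+1}))\to 0$ automatically, and the entire burden falls on the ``leaking'' case that you explicitly leave open. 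Note also that \Lr{hto0} controls $h$ along the trajectory of the walk, not along arbitrary paths in the support of the flow, and that level sets $\{h\ge h\}$ can genuinely be infinite (e.g.\ on a recurrent ray attached at $o$ one has $h\equiv 1$ but $w\equiv 0$), so any proof must exploit the conjunction of $w>0$ with the height bound --- presumably through planarity and unique absorption --- rather than either condition alone. As it stands the proposal is an honest plan whose two essential steps remain unproved.
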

\begin{proof}

\end{proof}
}


The energy of a function $u: V \to \R$ is defined by\\ 
$E(u):= \sum_{xy\in E(G)} (u(x)-u(y))^2$ (or more generally, $\sum_{xy\in E(G)} (u(x)-u(y))^2/c(xy)$ if the conductances are non-constant). Note that for our height function $h$, $E(h)$ equals the area of $K$, which is 1, as the contribution of each edge $e$ to $E(h)$ is the area of $R_e$ by definition. This, combined with the bounded degree condition, implies

\begin{lemma}\label{bodeg}
If $\pi$ is bounded from above then $w$ converges to 0, i.e.\
\labtequ{limax}{$\lim_n w(x_n)=0$ \fe\ enumeration $(x_n)$ of $V$.}
\end{lemma}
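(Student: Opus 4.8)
The plan is to use the identity $E(h)=1$, which says the total area of the cylinder (equivalently, the sum of the areas of all rectangles $R_e$) equals $1$, together with the bounded-degree hypothesis to show that the widths $w(x)$ of the vertex intervals must tend to $0$ along any enumeration of $V$. First I would recall from \eqref{ar} that the area of $R_e$ is $w(e)\cdot dh(e)$, and from \eqref{wch} that $w(e)=c(e)\,dh(e)$, so the area of $R_e$ equals $c(e)(h(x)-h(y))^2$ (writing $e=xy$); summing over all edges gives exactly $E(h)$ in the conductance-weighted sense, which is $1$. Hence $\sum_{e\in E(G)} w(e)\,dh(e) = 1 < \infty$.

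Next I would relate the vertex widths $w(x)$ to the edge data. By the definition just above the lemma, $w(x) = \tfrac12\sum_{\are\in E(x)} |w(\are)| = \tfrac12\sum_{e\ni x} w(e)$. The key point is that each $w(e)=c(e)\,dh(e)$, and for a square/rectangle in a tiling of a space of total area $1$, the width $w(e)$ is at most $1$ (the whole circumference), while more importantly $\sum_e w(e)\,dh(e)=1$ forces, in the presence of bounded degrees, that $\sum_e w(e)^2$ is controlled: indeed $w(e)^2 = c(e)\,w(e)\,dh(e) \le (\sup_x \pi_x)\,w(e)\,dh(e)$ since $c(e)\le \pi_x$ for either endpoint $x$ of $e$. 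Summing, $\sum_{e\in E(G)} w(e)^2 \le (\sup_x \pi_x)\sum_{e} w(e)\,dh(e) = \sup_x\pi_x < \infty$. Therefore the widths $w(e)$ form an $\ell^2$ sequence over the (countable) edge set, so for every $\eps>0$ only finitely many edges $e$ have $w(e)\ge\eps$.

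Now I would conclude: since $w(x)\le \tfrac12\sum_{e\ni x}w(e)$ and each vertex has at most $d:=\sup_x \deg(x)<\infty$ incident edges (bounded degree), if $w(x)\ge\eps$ then at least one incident edge $e$ has $w(e)\ge 2\eps/d$. As only finitely many edges have width $\ge 2\eps/d$, and each such edge is incident to at most two vertices, only finitely many vertices $x$ can have $w(x)\ge\eps$. Since $\eps>0$ was arbitrary, $w(x_n)\to 0$ for every enumeration $(x_n)$ of $V$, which is \eqref{limax}.

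The main obstacle, and the place requiring care, is making sure the hypothesis is used correctly: the statement says ``if $\pi$ is bounded from above'', and I must confirm that this is exactly what licenses both $c(e)\le\sup_x\pi_x$ (bounding each conductance, since $c(e)\le\pi_x$ for an endpoint $x$) and the bounded-degree conclusion — note that $\pi_x$ bounded together with, say, a uniform positive lower bound on conductances would give bounded degree, but in the simple-random-walk normalisation $c\equiv 1$ we have $\pi_x=\deg(x)$ directly, so $\pi$ bounded literally is bounded degree. I would phrase the argument so that it works from $\sup_x\pi_x<\infty$ alone in the $c\equiv 1$ case and remark that in the general weighted case the intended reading is that degrees are bounded (as in the hypotheses of \Tr{main}); the $\ell^2$ estimate $\sum_e w(e)^2\le \sup_x\pi_x$ is the crux and is robust either way.
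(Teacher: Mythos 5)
Your argument is correct and is essentially the paper's own proof: both deduce from the finiteness of $E(h)$ (the total area of the tiling) that only finitely many edges can have width $\ge\eps'$, and then use the bounded-degree hypothesis to pass from a vertex of width $\ge\eps$ to an incident edge of width $\ge\eps/D$. Your explicit $\ell^2$ bound $\sum_e w(e)^2\le\sup_x\pi_x$ and your remark distinguishing ``$\pi$ bounded'' from ``degree bounded'' in the weighted case are slightly more careful than the paper's one-line contradiction, but the underlying idea is identical.
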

\begin{proof}
If \ta\ infinitely many $x_i$ with $w(x_i)\geq \eps>0$, then each of them is incident with an edge $e_i$ with $w(e_i)> \eps/D$ where $D$ is the maximal degree. But each such edge contributes more than $(\eps/D)^2$ to $E(h)$, contradicting the fact that the latter is finite.
\end{proof}

We formulated \Tr{main} for graphs of bounded degree only, but most of our proof applies to all graphs satisfying the weaker condition \eqref{limax}, the only exception being the part on convergence to the boundary (\Sr{convBou}).

\section{Probabilistic interpretations of the geography of the tiling  cylinder} \label{geo}

Our next observation is that if we modify our graph by subdividing some edge $e$ into two edges the total resistance of which equals the resistance $1/c(e)$ of $e$, then the tiling remains practically unchanged. Note that in our metric space model of \G\ introduced in \Sr{prelim}, such an operation is tantamount to declaring some interior point $x$ of the arc $e$ to be a vertex while leaving the metric unnchanged.

\begin{lemma}\label{dummy}
Let \g be a transient plane graph and $x$ be an interior point of an edge $e$ of \G, and let $G'$ be the plane graph obtained from $G$ by declaring $x$ to be a vertex. Then $T_o(G')$ can be obtained from $T_G$ by cutting the rectangle $R_e$ into two rectangles along the horizontal line $h(x)$.
\end{lemma}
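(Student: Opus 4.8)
The plan is to follow the construction of \Sr{stil} ingredient by ingredient and observe that declaring $x$ a vertex changes nothing except the single rectangle $R_e$. Write $e=uv$, and let $G'$ be the graph obtained, so $e$ is replaced by edges $e_1=ux$ and $e_2=xv$ whose resistances $\ell_1,\ell_2$ satisfy $\ell_1+\ell_2=1/c(e)$; \obda\ $h(u)\ge h(v)$. Recall that $T_G$ maps $x$ to the horizontal interval of $R_e$ at the height $h(x)$, where $h$ is extended to interior points of edges affinely in resistance, so that $h(x)=\frac{\ell_2 h(u)+\ell_1 h(v)}{\ell_1+\ell_2}$.

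First I would show that $h_{G'}$ agrees with $h_G$ on $V(G)$ and that $h_{G'}(x)=\frac{\ell_2 h(u)+\ell_1 h(v)}{\ell_1+\ell_2}$. Since $x$ has degree $2$ in $G'$, the electrical \emph{series law} says that the trace of \rw\ on $G'$ onto $V(G)$ is \rw\ on the network obtained by replacing the series pair $e_1,e_2$ by one edge of resistance $\ell_1+\ell_2=1/c(e)$, which is precisely $G$; hence the escape probabilities to $o$ are unchanged on $V(G)$, and one step from $x$ lands at $u$ with probability $\ell_2/(\ell_1+\ell_2)$ and at $v$ with probability $\ell_1/(\ell_1+\ell_2)$, giving the asserted value of $h_{G'}(x)$. (Alternatively, the function on $V(G')$ agreeing with $h_G$ on $V(G)$ and equal to $\frac{\ell_2 h(u)+\ell_1 h(v)}{\ell_1+\ell_2}$ at $x$ is bounded, equals $1$ at $o$, is harmonic off $o$ by a short computation, and tends to $0$ along almost every \rw\ by \Lr{hto0}; a bounded function with these properties is unique — subtract two of them and apply the bounded martingale convergence theorem to the walk stopped at $o$ — so it equals $h_{G'}$.) Now Ohm's law \eqref{wch} produces the \rw\ flow of $G'$ from that of $G$: $w_{G'}(\ded{ux})=\tfrac1{\ell_1}(h(u)-h(x))=\tfrac1{\ell_1+\ell_2}(h(u)-h(v))=c(e)(h(u)-h(v))=w_G(\ded{uv})$, and likewise $w_{G'}(\ded{xv})=w_G(\ded{uv})$, while $w_{G'}=w_G$ on every other directed edge; in particular the total flow $w^*(o)$ out of $o$ is unchanged, so $T_{G'}$ is built with the same normalisation and no rescaling of conductances is needed.

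Next I would handle the dual. Subdividing $e$ leaves every face of $G$ intact and merely replaces the dual edge $e^*$ by two parallel edges $e_1^*=(ux)^*$ and $e_2^*=(xv)^*$ joining the same pair of dual vertices $\phi_1,\phi_2$ (the faces incident to $e$); thus $V((G')^*)=V(G^*)$. I would then check that the width function $w\colon V((G')^*)\to\R/\Z$ of $G'$ equals that of $G$: keep the base vertex $\zeta$, and for each dual vertex $z$ take a defining path to $\zeta$ that, whenever the $G^*$-path crossed $e^*$, uses instead the parallel step through $e_1^*$ (or $e_2^*$); under the directed-edge correspondence and by the flow identity just proved, such a crossing contributes $w_G(\ded{uv})=w_{G'}(\ded{ux})=w_{G'}(\ded{xv})$ in either graph, so the partial sums defining $w(z)$ agree mod $1$. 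This is legitimate because $w(z)$ is already known to be independent of the chosen path, in both graphs.

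Finally I would assemble these facts. The height function, the dual width function, and the rule placing each rectangle (which depends only on the $h$-values and the induced dual orientation) agree for $G$ and $G'$ on every edge $f\ne e$, so $R_f$ is literally the same rectangle in $T_{G'}$ as in $T_G$. For $e$: the rectangles $R_{e_1}$ and $R_{e_2}$ of $T_{G'}$ are bounded by the same pair of vertical lines $w(\phi_1),w(\phi_2)$ as $R_e$; their horizontal lines are $\{h(u),h(x)\}$ and $\{h(x),h(v)\}$ respectively; and since the \rw\ flow runs along $e_1$ and along $e_2$ in the same direction it runs along $e$, the induced dual orientations place $R_{e_1},R_{e_2}$ on the same side of the cylinder as $R_e$. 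Hence $R_{e_1}$ and $R_{e_2}$ are exactly the two rectangles into which the horizontal line at height $h(x)$ cuts $R_e$, which is the claim. The one step I expect to need real care is this last positioning argument — ruling out that $R_{e_1}$ or $R_{e_2}$ ends up reflected or shifted by an integer around $\R/\Z$ relative to $R_e$; everything else reduces to the series law and the invariance of the \rw\ flow under suppression of a degree-two vertex. One should also note in passing that $G'$, being electrically equivalent to $G$, inherits transience (and unique absorption) from $G$, so the construction of \Sr{stil} indeed applies to it.
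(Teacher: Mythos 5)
Your proof is correct and takes essentially the same route as the paper's: the entire content of the lemma is that $h$ and $w$ are unchanged at every old vertex and edge, which the paper asserts ``can be checked directly, or by using the fact that our random walk can be obtained as the sequence of distinct vertices visited by brownian motion on $G$'' (which cannot distinguish $G$ from $G'$), and you carry out that direct check via the series law. The paper then dismisses the remainder as ``an immediate consequence of the construction of the tilings''; your bookkeeping about the dual graph and the placement of $R_{e_1},R_{e_2}$ simply fills in that step.
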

\begin{proof}
Observe that the functions $h$ and $w$ of \Sr{secRWF} remain unchanged at every old vertex or edge. This can be checked directly, or by using the fact that our random walk can be obtained as the sequence of distinct vertices visited by brownian motion on $G$, defining $h(x)$ to be the probability that brownian motion from any point $x$ of $N$ will ever reach $o$, noticing that this definition of $h$ coincides with that of \Sr{secRWF} on $V$, and observing that brownian motion cannot tell the difference between $G$ and $G'$.

The assertion now is an immediate consequence of the construction of the tilings.
\end{proof}

Another way of stating \Lr{dummy} is that if we consider our tiling $T$ to be a function on \g as described above, then declaring interior edge points to be vertices does not change $T$.

\note{
	\begin{lemma}\label{noback}	
For a vertex $r$ and $a\in (h(r),1)$ let $P_r(a)$ denote the probability that \rw\ from $r$ will ever visit the set of vertices $x$ with $h(x)\geq a$. Then 
$$P_r(a)\leq h(r)/a.$$
\end{lemma}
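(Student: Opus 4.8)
The plan is to use that the height function $h$ is a bounded supermartingale along the random walk, and to combine the optional stopping theorem with the trivial observation that $h\ge a$ everywhere on the target set.

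First I would record that $h$ is superharmonic on all of $V$: it is harmonic at every vertex other than $o$ (this is the content of \eqref{k1}), while at $o$ we have $h(o)=1\ge \sum_{y} p_{o\to y}h(y)$, since the right-hand side is exactly the return probability of the walk to $o$. Hence, for random walk $(Z^n)$ started at $r$, the process $h(Z^n)$ is a nonnegative supermartingale bounded above by $1$. Next, set $B:=\{x\in V: h(x)\ge a\}$ and let $\tau$ be the first time the walk hits $B$, so that $P_r(a)=\mu_r(\tau<\infty)$; note that $o\in B$ because $h(o)=1\ge a$, and $r\notin B$ because $a>h(r)$, so $\tau\ge 1$. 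For each $n$ the stopped time $\tau\wedge n$ is a bounded stopping time, so the optional stopping theorem for supermartingales gives $\ExI{h(Z^{\tau\wedge n})}\le h(r)$. On the event $\{\tau\le n\}$ we have $Z^{\tau\wedge n}=Z^{\tau}\in B$, hence $h(Z^{\tau\wedge n})\ge a$, while on the complementary event $h(Z^{\tau\wedge n})\ge 0$; therefore $a\,\mu_r(\tau\le n)\le \ExI{h(Z^{\tau\wedge n})}\le h(r)$. Letting $n\to\infty$ yields $a\,P_r(a)\le h(r)$, i.e.\ $P_r(a)\le h(r)/a$, as desired.

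There is essentially no serious obstacle here beyond the bookkeeping above; the only point that needs a moment's thought is the superharmonicity of $h$ at the distinguished vertex $o$, which is immediate from the probabilistic meaning of $h$. If one prefers to avoid invoking optional stopping on an unbounded index, the same conclusion follows using \Lr{hto0}: with $\tau$ as above, $h(Z^{\tau\wedge n})\to h(Z^{\tau}){\mathbb 1}_{\tau<\infty}$ almost surely (since $h(Z^n)\to 0$ on $\{\tau=\infty\}$ by \Lr{hto0}), so bounded convergence together with $\ExI{h(Z^{\tau\wedge n})}\le h(r)$ gives $a\,\mu_r(\tau<\infty)\le\ExI{h(Z^{\tau}){\mathbb 1}_{\tau<\infty}}\le h(r)$, which is the claim.
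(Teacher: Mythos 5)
Your proof is correct and every step checks out: $h$ is superharmonic (harmonic off $o$ by \eqref{k1}, and superharmonic at $o$ because $\sum_y p_{o\to y}h(y)$ is exactly the return probability to $o$), so optional stopping at the bounded time $\tau\wedge n$, together with $h\geq a$ on $B=\{x\mid h(x)\geq a\}$, gives $a\,\mu_r(\tau\leq n)\leq h(r)$, and letting $n\to\infty$ finishes. The paper's own proof is a one-line version of the same inequality: since $h(v)$ is by definition the probability that \rw\ from $v$ ever reaches $o$, and since $o$ itself lies in $B$, the strong Markov property at the first visit to $B$ gives $h(r)=\ExII{r}{h(Z^{\tau}){\mathbb 1}_{\tau<\infty}}\geq a\,P_r(a)$ directly, with no need to discuss superharmonicity at $o$ or invoke optional stopping. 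So the two arguments are the same fact in different clothing: yours is marginally more general, in that it only uses that $h$ is a bounded nonnegative supermartingale along the walk with $h\geq a$ on the target set, while the paper's is shorter because it exploits the probabilistic definition of $h$ head-on. Either way the conclusion and the key observation ($h\geq a$ on $B$ plus the Markov property) coincide.
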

\begin{proof}
Recall that $h(v)$ is the probability that random walk from $v$ will ever reach $o$. Applying this to both $r$ and the $x$ in the definition of $P_r(a)$, and using the Markov property of \rw, we obtain $h(r)\geq P_r(a) a$ as claimed.
\end{proof}
}

\subsection{Parallel circles} \label{secPar}

We are now going to consider a sequence \seq{G} of subgraphs of \G\ with nice properties using our tiling $T$ of the cylinder $K$: let \seq{l}, $0<l_n<1$, be a sequence of numbers monotonely converging to 0, define the \defi{parallel circle} $L_n$ to be the set of points of $K$ at height $l_n$, and let $G_n$ be the subgraph mapped by $T$ to the strip between the top of the cylinder $K$ and $L_n$; in other words, $G_n$ is the subgraph spanned by the vertices $x$ with $h(x)\geq l_n$. Similarly, for $m<n\leq \infty$ define the \defi{strip} $G_n^m$ to be the subgraph of \g (and $G_n$) spanned by the vertices $x$ with $l_m\geq h(x) \geq l_n$, where we set $l_\infty= 0$. Note that $V(G_n) \cup V(G^n_\infty)= V(G)$.

We may, and will, assume that each $L_n$ meets \g at vertices only (thus $G_n \cup G^n_\infty= G$ and $G_n \cap G^n_\infty= L_n$); for if $e$ is an edge dissected by $L_n$, then we can, by \Lr{dummy}, put a dummy vertex at the point $x$  in the  interior of $e$ with $h(x)=l_n$. Let $B_n$ be the preimage of $L_n$, i.e.\ the set of (possibly dummy) vertices $b$ with $h(b) = l_n$. We claim that 
\labtequ{Bn}{\rw\ from $o$ visits every $B_n$ almost surely.}
Indeed, by \Lr{hto0} the heights of the vertices visited by \rw\ almost surely converge to 0; in other words, \rw\ converges to the base of $K$. Thus it almost surely visits every parallel circle $L_n$. Note that \eqref{Bn} is trivially true if each $G_n$ is a finite graph, which is the case for a large class of planar graphs \G\ but not always. 

The advantage of our assumption that each $L_n$ meets \g at vertices only can be seen in the following lemma.
Denote by $\mu^n_o(b)$ the exit distribution (harmonic measure) of \rw\ from $o$ killed at $B_n$.

\begin{lemma}\label{wn}
\Fe\ $n$ and every $b\in B_n$, $w(b)=\mu^n_o(b)$.
\end{lemma}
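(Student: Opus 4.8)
I would prove the lemma by identifying, inside the network $G_n$ (spanned by the vertices of height $\ge l_n$), the random walk flow $w$ restricted to the edges of $G_n$ with the unit electric current from $o$ to the boundary set $B_n$, and then invoking the classical correspondence between such a current and harmonic measure. The crucial structural observation is that, because of our standing assumption that $L_n$ meets \g only at vertices (achieved via \Lr{dummy}), no edge of \g has one endvertex of height $>l_n$ and one of height $<l_n$: such an edge would be mapped to a rectangle whose interior is crossed by $L_n$. Consequently every neighbour of a vertex of height $>l_n$ again lies in $V(G_n)$, the function $h$ is constant equal to $l_n$ on $B_n$, it is harmonic at every vertex of height $>l_n$ other than $o$ by \eqref{k1}, and, writing $\mathrm{div}_n h(x):=\sum_{y\sim x,\ y\in V(G_n)}c(xy)(h(x)-h(y))$ for the divergence of the flow \eqref{wch} inside $G_n$, we get $\mathrm{div}_n h(o)=w^*(o)=1$, $\mathrm{div}_n h(x)=0$ for $h(x)>l_n$, $x\neq o$, and $\mathrm{div}_n h(b)=-w(b)$ for $b\in B_n$ (here the neighbours of $b$ of height exactly $l_n$ contribute $0$ by \eqref{wch}, and the ones of height $>l_n$ contribute exactly minus the flow entering $b$ from above, which is $w(b)$ by \eqref{wv}).

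\textbf{The finite case.} Suppose first $G_n$ is finite. For $b\in B_n$ let $g_b(x):=\Pr_x[\text{\rw\ first hits }B_n\text{ at }b]$; then $g_b$ is harmonic at every vertex of height $>l_n$ (including $o$) and $g_b\restriction B_n=\mathbb 1_{\{b\}}$, so $\mathrm{div}_n g_b$ vanishes off $B_n$ and, telescoping over the edges of $G_n$, $\sum_{b'\in B_n}\mathrm{div}_n g_b(b')=\sum_{x\in V(G_n)}\mathrm{div}_n g_b(x)=0$. Summation by parts gives the symmetric Dirichlet pairing identity
\[
\sum_{x\in V(G_n)} h(x)\,\mathrm{div}_n g_b(x)\;=\;\sum_{\{x,y\}\in E(G_n)} c(xy)\bigl(h(x)-h(y)\bigr)\bigl(g_b(x)-g_b(y)\bigr)\;=\;\sum_{x\in V(G_n)} g_b(x)\,\mathrm{div}_n h(x).
\]
The left-hand sum equals $\sum_{b'\in B_n} l_n\,\mathrm{div}_n g_b(b')=l_n\cdot 0=0$, while the right-hand sum equals $g_b(o)\cdot 1+\sum_{b'\in B_n} g_b(b')\cdot(-w(b'))=g_b(o)-w(b)$. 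Hence $w(b)=g_b(o)$, and $g_b(o)=\mu^n_o(b)$ either by definition or, if one prefers, by optional stopping applied to the bounded martingale $g_b(Z^{m\wedge\tau})$, where $\tau$ is the first hitting time of $B_n$ (finite a.s.\ by \eqref{Bn}). Summing over $b\in B_n$ incidentally re-derives that $w$ has total mass $1$ on $B_n$.

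\textbf{The infinite case and the main obstacle.} When $G_n$ is infinite the summation by parts above involves infinite series and a potential "boundary term at infinity", and this is the only genuine difficulty. I would handle it by exhausting $G_n$ by finite connected subgraphs $H_0\subseteq H_1\subseteq\cdots$ with $o\in H_0$, $B_n\subseteq H_0$ and $\bigcup_k H_k=G_n$, killing the walk upon hitting $B_n$ \emph{or} leaving $H_k$, and running the finite argument on $H_k$. Since \rw\ from $o$ stays in $G_n$ until it hits $B_n$ and hits $B_n$ a.s.\ \eqref{Bn}, the probability of exiting $H_k$ before $B_n$ tends to $0$, so the harmonic measures on these enlarged boundaries converge to $\mu^n_o$; and the relevant flows converge in energy because $E(h)=1<\infty$ (the tiling has area $1$), which keeps the Dirichlet pairings convergent. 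Passing to the limit yields $w(b)=\mu^n_o(b)$ in general. (Alternatively one can simply cite the standard fact that the unit current flow from a vertex to a subset in a network of finite energy sinks exactly the harmonic measure at each boundary point, the free current being the energy-limit of the finite ones by Thomson's principle, and note that $h\restriction V(G_n)$, being constant on $B_n$, is precisely this current's potential.) I expect the finite case to be routine and the justification of the limiting procedure in the infinite case to be the one point requiring care.
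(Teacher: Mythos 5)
Your finite-case argument is correct, and it is a genuinely different route from the paper's. The paper never writes down a Green/summation-by-parts identity: it proves (\Lr{stretch}) that the height function $h_n$ of the walk killed at $B_n$ equals the affine image $(h-l_n)/(1-l_n)$ of $h$ on $G_n$, because both functions are harmonic off $B_n\cup\{o\}$, agree on $B_n\cup\{o\}$, and the walk hits $B_n$ almost surely \eqref{Bn}, so \eqref{harmB} forces them to coincide. Hence $w$ restricted to $E(G_n)$ is the flow of the killed walk, and $w(b)$, being the net flux into $b$ from above \eqref{wv}, is exactly the absorption probability $\mu^n_o(b)$. That argument is pointwise --- no infinite sums occur --- so it treats finite and infinite $G_n$ uniformly. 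Your Dirichlet pairing of $h$ against the harmonic measure $g_b$ buys a self-contained finite-case proof that avoids constructing the second tiling, but it is precisely the step that needs justification when $G_n$ is infinite, and the paper explicitly points out that $G_n$ need not be finite in general.

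That is where your proposal has a genuine gap. In your exhaustion, write $\partial_k$ for the artificial boundary of $H_k$ inside $G_n$ and $g_b^{(k)}$ for the harmonic measure of the walk killed at $B_n\cup\partial_k$. One boundary term of Green's identity, $\sum_{x\in\partial_k}g_b^{(k)}(x)\,\mathrm{div}_{H_k}h(x)$, vanishes identically because $g_b^{(k)}\equiv 0$ on $\partial_k$; but the other, $\sum_{x\in\partial_k}\bigl(h(x)-l_n\bigr)\,\mathrm{div}_{H_k}g_b^{(k)}(x)$, does not: $h-l_n$ need not be small on $\partial_k$ (there may be infinitely many vertices of $G_n$ at heights bounded away from $l_n$), and the flux of the $g_b^{(k)}$-current absorbed at $\partial_k$ is governed by the effective conductance out of $b$, not by $E(h)$. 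So ``$E(h)=1<\infty$ keeps the Dirichlet pairings convergent'' does not address the term that actually matters; a Cauchy--Schwarz bound would need a uniform bound on $E(g_b^{(k)})$, which you have not supplied and which is not automatic. The same caveat applies to the parenthetical appeal to ``the standard fact'': on an infinite network one must specify which unit current is meant (free, wired, or the random-walk-induced one), and identifying $h\restriction V(G_n)$ as the potential of the correct one is exactly the content of \Lr{stretch}. One can likely salvage your exhaustion by showing, via reversibility (in the spirit of \Lr{ww}), that the total current absorbed at $\partial_k$ tends to $0$ because excursions from $B_n$ into $G_n$ almost surely return to $B_n$; but the quickest repair is simply the paper's uniqueness argument via \eqref{harmB}.
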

%
%




This important observation does not presuppose \g to be planar; in the planar case we can visualise $w$ as width \wrt\ $T$, but $w$ is well-defined, and \Lr{wn} true, for any transient graph. Note that \Lr{wn} is a reformulation of \Or{obsbm} when considering the brownian motion mentioned in \Sr{prelim}; the advantage of that formulation is that it is not affected by the dummy vertices we introduced above (recall the discussion in the proof of \Lr{dummy}).

We will also show that $w(b),b\in B_n$ also coincides with the distribution of the last vertex of $B_n$ visited by \rw\ from $o$ (and so the distribution of the first vertex of $B_n$ visited coincides with that of the last). 

Before we prove all this, we need to introduce a further tool.


If \g is any finite (or recurrent) planar graph, then it is possible to construct a tiling as we did in the infinite case, except that we now have to stop the \rw\ at some point because if we do not, then $h(v)$ (the probability to reach $o$ from $v$) will be 1 \fe\ $v$, yielding a trivial tiling. One possibility is to stop our \rw\ upon its first visit to a fixed vertex $o'$; this is in fact what Brooks et.\ al.\ did when they first introduced square tilings \cite{BeSchrST,BSST}. 

We will follow a slightly different approach: we will construct a tiling $T_{G_n}$ corresponding to $G_n$ by starting our \rw\ at $o$ and stopping it upon its first visit to the set $B_n$ (recall that such a visit exists almost surely \eqref{Bn}). For this, we repeat the construction of \Sr{constr} except that we now define $h_n(v)$ to be the probability for \rw\ from $v$ to reach $o$ before $B_n$. Note that now we can reformulate \eqref{defh} as \\ 
$h_n(v) = \frac1{\pi_v} \ExI{\text{\# of visits to $v$ by \rw\ from $o$ stopped at $B_n$}}$.\\
The rest of the construction of the tiling remains the same. Note that we now have $h_n(v)=0$ \fe\ $v\in B_n$; in fact this tiling $T_{G_n}$ can be obtained from our tiling $T_G$ of \g by linearly stretching the part of $T_G$ bounded between the lines $h=l_0$ and $h=l_n$ to bring the latter line to height zero:

\begin{lemma} \label{stretch}
$T_{G_n}$ can be obtained from $T_G$ by the following transformation: if $T_G$ maps a point $p$ of $G_n$ to the interval $I_p \times \{h(p)\}$, then $T_{G_n}$ maps $p$ to the interval $I_p \times \{ \frac{h(p)-l_n}{1-l_n}\}$. 
\end{lemma}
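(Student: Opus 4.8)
The plan is to show that the height function $h_n$ used in the construction of $T_{G_n}$ is the affine renormalization $\frac{h - l_n}{1 - l_n}$ of the original height function $h$, restricted to $G_n$, and that the width function $w$ on $V(G^*)$ is literally unchanged; the claim then follows from the construction of the tiling in \Sr{stil}, since the rectangle $R_e$ is determined by the two height coordinates and the two width coordinates.

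First I would relate $h$ and $h_n$. Recall $h(v)$ is the probability that \rw\ from $v$ ever reaches $o$, and $h_n(v)$ is the probability that \rw\ from $v$ reaches $o$ \emph{before} hitting $B_n$; here $v$ ranges over $V(G_n)$, and I read $h$ on $B_n$ as the common value $l_n$ (recall each $L_n$ meets $G$ at vertices only, so $B_n \subseteq V(G)$ and $h \equiv l_n$ on $B_n$). Decompose the event ``\rw\ from $v$ reaches $o$'' according to whether it hits $B_n$ first: by the strong Markov property at the first hitting time of $B_n \cup \{o\}$, and using that $h$ is constant $= l_n$ on $B_n$ while the probability of reaching $o$ from any point of $B_n$ is again $l_n$ (by \eqref{defh} and the definition of $l_n$ as a height of a vertex of $B_n$; more directly, $h(b) = l_n$ for $b \in B_n$ and $h(b)$ \emph{is} the return probability to $o$), I get
\[
h(v) = h_n(v) \cdot 1 + (1 - h_n(v)) \cdot l_n,
\]
because conditioned on hitting $B_n$ before $o$, the walk restarts from a point of $B_n$ from which it reaches $o$ with probability $l_n$, while conditioned on hitting $o$ before $B_n$ it reaches $o$ with probability $1$. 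Solving gives $h_n(v) = \frac{h(v) - l_n}{1 - l_n}$, which is exactly the asserted vertical transformation; it is affine and hence maps the maximal horizontal interval of $T_G$ at height $h(p)$ inside $R_e$ to the corresponding interval at height $\frac{h(p) - l_n}{1 - l_n}$. The interior-edge-point version follows from \Lr{dummy} (a point $p$ in the interior of an edge can be treated as a dummy vertex).

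Next I would check that the width data are unchanged. The only subtlety in the construction of \Sr{stil} is the \rw\ flow $w(\ded{xy})$: for $T_{G_n}$ this is defined via $h_n$ and Ohm's law $w_n(\ded{xy}) = c(xy)(h_n(x) - h_n(y))$, whereas for $T_G$ it is $w(\ded{xy}) = c(xy)(h(x) - h(y))$. Since $h_n - h_n = \frac{1}{1 - l_n}(h - h)$ on any edge of $G_n$, we get $w_n = \frac{1}{1 - l_n} w$ on the edges of $G_n$. This scalar rescales the total flow out of $o$ from $\eta = 1$ to $\frac{1}{1 - l_n}$; after the normalization $\eta = 1$ built into the construction (dividing conductances, equivalently dividing $w_n$ by this constant), the flow, and hence the induced width function $w$ on $V(G^*)$ and all horizontal coordinates $w(x'), w(y')$, coincide with those of $T_G$ restricted to the edges of $G_n$. (Alternatively, and cleanly, one observes that the widths $w(x')$ are obtained by summing $w(\ded{z_i z_{i+1}}^{\,*})$ along dual paths modulo the total flow, so a global positive rescaling of $w$ changes nothing after renormalizing $\eta$ to $1$.) Combining the vertical affine map on heights with the unchanged horizontal coordinates, the rectangle $R_e$ of $T_G$ is sent to the rectangle of $T_{G_n}$ exactly as stated.

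The only real obstacle is the bookkeeping at the boundary set $B_n$: one must be careful that $h$ genuinely equals $l_n$ on $B_n$ (guaranteed by our standing assumption that $L_n$ meets $G$ at vertices only, and by \eqref{defh}), and that the renormalization constant $\frac{\pi_o}{G(o,o)}$ hidden in the definition of $h'$, together with the rescaling of $\eta$, is handled consistently between the two tilings; once this is pinned down, everything else is a direct substitution into the construction of \Sr{stil}.
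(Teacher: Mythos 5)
Your proof is correct, and it reaches the key identity $h_n=\frac{h-l_n}{1-l_n}$ by a different route than the paper. The paper observes that $f(v):=\frac{h(v)-l_n}{1-l_n}$ and $h_n$ agree on the boundary $B_n\cup\{o\}$, that both are harmonic elsewhere on $G_n$, and then invokes the uniqueness of a harmonic function with prescribed boundary values (via \eqref{harmB} and \eqref{Bn}); you instead derive the identity directly by conditioning on the first hitting time of $B_n\cup\{o\}$ and using the strong Markov property, $h(v)=h_n(v)+(1-h_n(v))l_n$. The two arguments are of course close cousins (the uniqueness principle is itself an optional-stopping statement), but yours is more computational and self-contained, while the paper's generalises more readily to other boundary data. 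One point you should make explicit: your decomposition tacitly assumes that \rw\ from $v$ almost surely hits $B_n\cup\{o\}$, so that ``not reaching $o$ before $B_n$'' really means ``reaching $B_n$ before $o$''; this follows from \Lr{hto0} together with the assumption that $L_n$ meets \g at vertices only (so $B_n$ separates $G_n$ from the rest), and is exactly the role \eqref{Bn} plays in the paper's version. For the widths, the paper simply says $w$ is determined by $h$ via \eqref{wch}; your more explicit bookkeeping (Ohm's law rescales the flow by $\frac1{1-l_n}$, and renormalising the total flow out of $o$ to $1$ undoes this) is a reasonable way to justify that the horizontal coordinates are unchanged --- equivalently, one can note that the flow of the stopped walk, defined as expected net traversals, automatically has unit strength and coincides with $w$ on $E(G_n)$ (this is the content of \Lr{ww}). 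Either way your conclusion $w_n=w$ is the right one, and the rest of your argument is a direct substitution into the construction of \Sr{stil}.
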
 
\begin{proof}
Let $f(v):= \frac{h(v)-l_n}{1-l_n}$. Note that the formula $h_n(v) = f(v)$ is trivially correct for $v\in B_n \cup \{o\}$, and that both $h$ and $h_n$ are harmonic on the rest of $G_n$. Moreover,  $f$ is harmonic since $h$ is, because the former is a linear transformation of the latter. Recall that \rw\ from $o$ visits $B_n$ almost surely \eqref{Bn}. Thus, since the values of a harmonic function are uniquely determined by its boundary values (recall \eqref{harmB} and the remark after it), $h_n$ must coincide with $f$ everywhere.

Recall that $w$ is uniquely determined by $h$ \eqref{wch}. Thus the width of each vertex is the same in the two tilings, and as we are using the same embedding in both cases, the position of the interval corresponding to any vertex is also the same.
\end{proof}

\Lr{stretch} easily implies \Lr{wn}: recall that $w(b)$ equals the net number of particles ariving to $v$ from above by \eqref{wv}. But in the killed \rw\ we used in the construction of $T_{G_n}$, this number coincides with the exit probability $\mu^n_o(b)$. Since  by \Lr{stretch} $w(b)$ is the same in $T_{G_n}$ and $T_G$, \Lr{wn} follows.

\medskip
For $m\in\N$ and a walk $W$ from $o$, we define an \defi{$m$-subwalk} of $W$ to be a maximal subwalk of $W$ that has no interior vertex in $B_m$. Note that $W$ is the concatenation of all its $m$-subwalks. The first of them is called the \defi{initial} $m$-subwalk, and if \ti\ a last one it is called the \defi{final} $m$-subwalk; all others are \defi{interior} $m$-subwalks, and start and end in $B_m$ but do not visit $B_m$ in between.

Recall that $w(\are)$ was defined as the expected net number of traversals of $\are$ by \rw\ from $o$. Let $w^m(\are)$ be the contribution to $w$ by the initial and final $m$-subwalks of \rw\ from $o$, i.e.\ we let $W$ be a \rw\ from $o$ and let $w^m(\are)$ be the expected net number of times that the initial $m$-subwalk of $W$ goes through $\are$ plus the expected net number of times that the final $m$-subwalk of $W$ goes through $\are$. We define $w^m_n(\are)$ similarly for $n>m$, except that $W$ is stopped upon its first visit to $B_n$.

\begin{lemma} \label{ww}
$w(\are)=w^m(\are)=w^m_n(\are)$ \fe\ $\are\in \arE(G)$.
\end{lemma}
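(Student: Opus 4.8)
The statement asserts that the flow value $w(\are)$ is unchanged if we only count traversals by the initial and final $m$-subwalks (call this $w^m$), and also if we further stop the walk at $B_n$ (call this $w^m_n$). The intuition is that every \emph{interior} $m$-subwalk starts and ends on $B_m$, so it cannot contribute net flow across cuts that separate $o$ from infinity; more to the point, the net flow it contributes across any single directed edge should have expectation zero, since such a subwalk is, in distribution, a random walk excursion that begins and ends in $B_m$. I would organize the argument around this cancellation.

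\textbf{Step 1: Decompose $w$ by $m$-subwalks.} Fix a directed edge $\are = \ded{xy}$. Let $W$ be \rw\ from $o$. Write $W$ as the concatenation of its $m$-subwalks $W_0, W_1, W_2, \dots$ (the initial one $W_0$, then interior ones, and possibly a final one). Since $w(\are)$ is the expected net number of traversals of $\are$ by $W$, and since this net count is additive over the pieces of a decomposition of $W$ into consecutive subwalks, linearity of expectation gives
\labtequ{decomp}{$w(\are) = w^m(\are) + \sum_{k\geq 1, W_k \text{ interior}} \ExI{N_{\are}(W_k)}$,}
where $N_{\are}(W_k)$ denotes the net number of traversals of $\are$ by the $k$-th interior $m$-subwalk (and the sum is over interior subwalks only, the final one having been absorbed into $w^m$). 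The main task is to show every term in the sum vanishes.

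\textbf{Step 2: Each interior $m$-subwalk contributes net zero in expectation.} Condition on the event that there are at least $k$ interior $m$-subwalks and on the vertex $b\in B_m$ at which the $k$-th one starts; by the strong Markov property the $k$-th interior $m$-subwalk is distributed as \rw\ from $b$ run until its first return to $B_m$. So it suffices to show: for any $b\in B_m$, the expected net number of traversals of $\are$ by \rw\ from $b$ stopped at its first return to $B_m$ is $0$. Here I would invoke the same reversibility/Ohm's-law machinery used earlier: the expected net traversal of $\ded{xy}$ by this killed walk equals $c(xy)(g(x)-g(y))$ where $g$ is the ``killed Green function'' height, i.e.\ $g(v) = \frac{1}{\pi_v}\ExI{\text{\# visits to }v\text{ by \rw\ from }b\text{ stopped on first return to }B_m}$ --- this is exactly the computation behind \eqref{defh} and \eqref{wch}, applied to the walk killed on $B_m$. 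But this walk starts \emph{on} $B_m$ and is killed on its first return to $B_m$, so it makes no full excursions: the only visit it ever makes to $B_m$ is the initial one at $b$, hence $g(v)=0$ for every $v\in B_m$. Since $g$ is harmonic off $B_m \cup \{b\}$ and... actually the cleanest route: $g$ is a nonnegative function, harmonic at every vertex outside $B_m$ (no special vertex, since the walk is killed the instant it hits $B_m$ again, so there is no ``return to $b$'' accumulating extra mass beyond what harmonicity dictates), vanishing on $B_m$; by the maximum principle (uniqueness of bounded harmonic functions with given boundary values on $B_m$, using that the walk hits $B_m$ a.s.\ --- it does, since it starts on $B_m$), $g\equiv 0$. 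Hence $c(xy)(g(x)-g(y))=0$, as wanted. Plugging back into \eqref{decomp} gives $w(\are)=w^m(\are)$.

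\textbf{Step 3: Stopping at $B_n$ costs nothing.} For the second equality I compare $w^m(\are)$ with $w^m_n(\are)$. Now I run the same decomposition but for the walk $W$ stopped at its first visit to $B_n$. If $W$ never hits $B_n$... it does a.s.\ since \rw\ converges to the base of $K$ and $n<\infty$, actually one must be slightly careful: \rw\ hits $B_n$ a.s.\ by \eqref{Bn}. Once stopped at $B_n$, the walk's sequence of $m$-subwalks is an \emph{initial segment} of the $m$-subwalks of the unstopped walk, EXCEPT that the piece in which $B_n$ is first hit becomes the final $m$-subwalk of the stopped walk (it is a $B_n$-killed version of an $m$-subwalk of the unstopped walk). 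The difference $w^m(\are) - w^m_n(\are)$ is therefore an expected net traversal count over the portion of $W$ after its first hit of $B_n$, reorganized into $m$-subwalks: the tail-portion consists of finitely-many-in-expectation $m$-subwalks which, apart from the one straddling $B_n$, are interior $m$-subwalks and hence contribute $0$ by Step 2; the straddling one splits into a part before $B_n$ (counted in both $w^m$ and $w^m_n$, hence cancels) and a part after $B_n$, which is itself an excursion from $B_n$ back towards... hmm. The clean way: apply the Step-1--Step-2 argument with $B_m$ replaced throughout by $B_m \cup B_n$ (the walk stopped at $B_n$ is, in particular, also decomposed by hits to $B_m\cup B_n$), noting that interior $(B_m\cup B_n)$-subwalks start and end on $B_m\cup B_n$ and so contribute net zero by the same killed-Green-function-vanishes argument; the initial and final $(B_m\cup B_n)$-subwalks then account for exactly $w^m_n$ as well as $w^m$ (since the relevant endpoints lie on $B_m\cup B_n$). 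I expect the bookkeeping of which subwalks are ``initial/final'' under the two stopping rules to be the fiddly part, and the one real mathematical point --- interior excursions contribute zero net flow --- to be the content, handled by the ``killed Green function vanishes on its boundary'' observation of Step 2.

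\textbf{Main obstacle.} The substantive point is Step 2, and within it the justification that the killed Green height $g$ is genuinely harmonic off $B_m$ with no exceptional source vertex --- i.e.\ that an excursion from $B_m$ to $B_m$ really does produce a flow satisfying \knl\ \eqref{k1} at \emph{every} interior vertex including the start vertex $b$. This is true because $b$'s net flow balance involves both the departure at the start and the arrival at the end of the excursion, and stopping-at-first-return means these cancel; but one must phrase it carefully (e.g.\ via $g(v) = c$-weighted expected local time and a direct check of \eqref{k1}) rather than hand-wave. The rest is routine linearity-of-expectation and strong Markov bookkeeping.
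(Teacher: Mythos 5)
Your Step 2 contains a genuine error, and it is the heart of the proof. You claim that for a \emph{fixed} $b\in B_m$, the expected net traversal of every edge by \rw\ from $b$ killed at its first return to $B_m$ is zero. This is false. The flow generated by such an excursion is not divergence-free: the excursion departs from $b$ exactly once but arrives at $b$ at most once (it typically ends at some other vertex of $B_m$), so $b$ is a genuine source, and the vertices of $B_m$ are sinks weighted by the return distribution. Concretely, your killed Green height $g$ does not vanish at $b$ (the initial visit counts, so $g(b)=1/\pi_b$), and \knl\ fails at $b$; the resulting edge function is the (nonzero) unit current from $b$ to the hitting measure on $B_m$, not the zero flow. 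A telltale sign is that your argument never uses that $B_m$ is a level set of $h$, yet the lemma is false for a general separating set: the interior excursions only cancel \emph{in aggregate}, and only because $B_m$ is a level set.

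The paper's cancellation is of a different kind: it pairs each candidate interior subwalk $Q$ from $a\in B_m$ to $z\in B_m$ with its time-reversal $Q^-$, and compares their expected multiplicities over the whole walk. The expected number of interior $m$-subwalks equal to $Q$ is $G(o,a)p_{az}$ and equal to $Q^-$ is $G(o,z)p_{za}$; path-reversibility gives $\pi(a)p_{az}=\pi(z)p_{za}$, and these two expectations then coincide precisely because $h(a)=G(o,a)/\pi(a)$ equals $h(z)=G(o,z)/\pi(z)$, i.e.\ because $a$ and $z$ lie on the same level set of $h$ (see \eqref{defh}). Since $Q$ and $Q^-$ traverse each edge equally often in opposite directions, each such pair contributes net zero, and summing over all pairs gives the lemma. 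To repair your proof you would need to replace Step 2 by this aggregated, reversibility-plus-level-set argument (or an equivalent one summing the single-excursion currents from all $a\in B_m$ weighted by $G(o,a)$ and showing the \emph{sum} vanishes); the excursion-by-excursion claim cannot be saved. Your Steps 1 and 3 (the decomposition and the $B_m\cup B_n$ bookkeeping) are fine in outline once Step 2 is fixed.
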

\begin{proof}
By the definitions, an equivalent statement is that \fe\ directed edge $\are$, the interior $m$-subwalks of \rw\ from $o$ traverse $e$ in each direction equally often in expectation, where \rw\ is stopped upon its first visit to $B_n$ when considering the second equation.

To prove that these traversals indeed cancel out, let $Q$ be a walk that starts and ends in $B_m$ and does not visit $B_m$ in between, in other words, a candidate 
for an interior $m$-subwalk of our \rw; for the proof of $w=W_n^m$ we also demand $Q$ to avoid $B_n$ here. Let $a,z\in B_m$ be the endvertices of $Q$. Let $p_{az}$ denote the probability that \rw\ from $a$ will coincide with $Q$ up to its first re-visit to $B_m$, and conversely, let $p_{za}$ denote the probability that \rw\ from $z$ will coincide with the inverse $Q^-$ of $Q$ up to its first re-visit to $B_m$. It is well-known, and straightforward to prove using \eqref{trpr}, that 
$\pi(a)p_{az}= \pi(z)p_{za}$. 

Now recall that $h(a)=  G(o,a)/\pi(a)$ by \eqref{defh}, and similarly $h(z)=  G(o,z)/\pi(z)$, where $G(o,a)$ denotes the expected number of visits to $a$. Since $a,z\in B_m$, we have $h(a)=h(z)$. Now note that the expected number of times that an $m$-subwalk of our \rw\ coincides with $Q$ equals $G(o,a)p_{az}$ by linearity of expectation and the Markov property. Similarly, the expected number of times that an $m$-subwalk of our \rw\ coincides with $Q^-$ is $G(o,z)p_{za}$. Putting all this together we have
$$G(o,a)p_{az} = {h(a)}{\pi(a)} p_{az} = h(a) \pi(z)p_{za} = h(z) \pi(z)p_{za} =  G(o,z)p_{za}.$$
Thus, the contribution of the pair of walks $Q,Q^-$ to $w(\are)$ is zero, for even if they contain the edge $\are$, their contributions cancel out in expectation. Since all walks that are candidates
for an interior $m$-subwalk of our \rw\ can be organised in such pairs, their overall contribution to $w(\are)$ is zero as claimed.
\end{proof}

This implies the following assertion, which complements \Lr{wn}.
\begin{corollary}\label{wnlast}
\Fe\ $m$ and every $b\in B_m$, $w(b)$ coincides with the probability distribution of the last visit of \rw\ (from $o$) to $B_m$. This remains true if the \rw\ is stopped upon its first visit to $B_n$ for $n> m$.
\end{corollary}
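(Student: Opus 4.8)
The plan is to deduce \Cr{wnlast} from \Lr{ww} together with the probabilistic reading of $w(b)$ in \eqref{wv}. Recall that, by definition, $w(b)=\sum_{\are\in E^-(b)} w(\are)$ is the expected net number of traversals, by \rw\ from $o$, of edges directed from $b$ to a strictly lower neighbour; equivalently, by \eqref{wv}, the expected net number of particles arriving at $b$ from above. Since \Lr{ww} gives $w(\are)=w^m(\are)$, this expectation is unchanged if we count only traversals performed by the \emph{initial} and \emph{final} $m$-subwalks of \rw\ from $o$, and the whole argument amounts to showing that this restricted count equals, in expectation, the probability that the last vertex of $B_m$ visited is $b$.

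First I would pin down the geography of the $m$-subwalks, using our standing convention that $L_m$ meets \g only at vertices --- so every edge with one endpoint of height $>l_m$ and the other of height $<l_m$ has been subdivided at a vertex of $B_m$ --- together with \Lr{hto0}. This yields three facts. (a) \rw\ from $o$ cannot pass from a vertex of height $>l_m$ to one of height $<l_m$ without visiting $B_m$ in between; since $h(o)=1>l_m$, the initial $m$-subwalk therefore stays among vertices of height $\ge l_m$ and meets $B_m$ only at its last vertex, the first hit of $B_m$. (b) Since $h(Z^n)\to 0$ almost surely, \rw\ visits $B_m$ only finitely often almost surely, so the final $m$-subwalk exists almost surely; write $b'$ for the last vertex of $B_m$ visited. (c) Immediately after $b'$ the walk steps to a vertex of height $<l_m$ and never again reaches height $\ge l_m$: a step to a vertex of height $>l_m$ would, by (a) and $h(Z^n)\to 0$, trap the walk among vertices of height $>l_m$ forever, which is impossible. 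Hence the final $m$-subwalk meets $B_m$ only at its first vertex $b'$.

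Granting this, the count is immediate. The initial $m$-subwalk never visits a vertex of height $<l_m$, hence never traverses an edge from $b$ to a lower neighbour, so it contributes $0$. The final $m$-subwalk traverses such an edge exactly once --- namely at its very first step --- precisely when $b'=b$, and never traverses one in the reverse direction, since $b\in B_m$ occurs on it only as its first vertex. Taking expectations and using $w(\are)=w^m(\are)$ from \Lr{ww}, we get $w(b)=\Pr_o[b'=b]$, which is the asserted last-visit distribution. For the ``stopped'' version I would repeat the argument with the identity $w(\are)=w^m_n(\are)$ of \Lr{ww} in place of $w(\are)=w^m(\are)$: the walk stopped upon its first visit to $B_n$ still visits $B_m$ almost surely (since $h(o)=1>l_m>l_n$ forces a visit to $B_m$ before the stopping time, and $B_n$ is reached almost surely by \eqref{Bn}), and after its last visit to $B_m$ it again descends to vertices of height $<l_m$ and stays there until it stops, so its final $m$-subwalk again meets $B_m$ only at its first vertex; the rest is verbatim.

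The only point requiring real care is fact (c): ruling out that, after its last visit to $B_m$, the walk wanders for a while among vertices of height $>l_m$ before (necessarily re-hitting $B_m$ and) descending. This is exactly where \Lr{hto0} enters, and in the stopped case ``$h(Z^n)\to 0$'' must be replaced by ``the stopped walk reaches $B_n$ almost surely''. Everything else is bookkeeping of signs in $\sum_{\are\in E^-(b)} w(\are)$ and linearity of expectation, exactly as in the proof of \Lr{ww}.
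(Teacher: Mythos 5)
Your proof is correct and rests on the same key fact as the paper's, namely the cancellation of the interior $m$-subwalks' contributions from \Lr{ww}; you simply compute the last-visit distribution directly from \eqref{wv} and $w=w^m$ (resp.\ $w=w^m_n$ for the stopped version), whereas the paper compares the last-visit to the first-visit distribution and then invokes \Lr{wn} (resp.\ \Lr{stretch}). The geographic facts (a)--(c) you spell out are exactly the details the paper leaves implicit, and they are argued correctly.
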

(The reader might be upset at this point for our use of the letter $m$ here and the letter $n$ in \Lr{wn}, but will be grateful for this when reading \Sr{hard}.)
\begin{proof}
The difference between the distributions of the first and last visits to $B_m$ is determined by the behaviour of the interior $m$-subwalks of \rw\ from $o$. But by \Lr{ww} the influences of these $m$-subwalks cancel out, and so \Lr{wn} implies our claim. For the second sentence we use \Lr{stretch}.
\end{proof}

\note{not needed:
	\begin{lemma}\label{wn2}
For \rw\ on $G$ started at a random vertex in $B_m$ according to the  distribution $w(b),b\in B_m$, the  exit distribution to $B_n$ for any $n> m$ coincides with $w(b),b\in B_n$. The same holds if the \rw\ takes place on $G^m_\infty$ rather than \G.
\end{lemma}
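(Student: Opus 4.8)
The plan is to handle the two assertions separately.

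For the walk on \G\ I would argue via the strong Markov property. Run the walk from $o$ and stop it at its first visit $\tau_n$ to $B_n$; by \eqref{Bn}, which rests on \Lr{hto0}, we have $\tau_n<\infty$ almost surely. Since we arranged (via \Lr{dummy}) that $L_m$ meets \g only in vertices, no edge of \g is dissected by $L_m$, so along any walk the first vertex of height $<l_m$ is immediately preceded by a vertex of $B_m$; as $h(o)=1$ and $l_n<l_m$, the walk from $o$ therefore hits $B_m$ at a time $\tau_m$ with $\tau_m<\tau_n$. By \Lr{wn}, $Z^{\tau_m}$ has law $w(\cdot)\restr_{B_m}$ and $Z^{\tau_n}$ has law $w(\cdot)\restr_{B_n}$. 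Applying the strong Markov property at $\tau_m$, the conditional law of $Z^{\tau_n}$ given $Z^{\tau_m}=b$ is precisely the exit distribution to $B_n$ of the walk started at $b$. Averaging over $b$ shows that the $w(\cdot)\restr_{B_m}$-mixture of these exit distributions is $w(\cdot)\restr_{B_n}$, which is the first claim.

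For the walk on $G^m_\infty$ there is no vertex of height $1$ to start from, so I would switch to the electrical picture. First I would check that the walk on $G^m_\infty$ from any $b$ with $w(b)>0$ reaches $B_n$ almost surely: it agrees with the walk on \g at every vertex of height $<l_m$ (such vertices have all their \g-neighbours in $G^m_\infty$), and one rules out lingering on $B_m$ forever again via \Lr{hto0}. Let $g(v)$ be the expected number of visits to $v$ by the walk on $G^m_\infty$ started from $w(\cdot)\restr_{B_m}$ and killed at $B_n$, and let $f(\are)$ be the corresponding expected net number of traversals of $\are$; $f$ is supported on the strip $G_n^m$. By the standard occupation-measure computation, $f$ is a flow with net outflow $w(b)$ at each $b\in B_m$ and net inflow $\nu(b')$ at each $b'\in B_n$ (where $\nu$ is the exit distribution we wish to identify), it is harmonic at the interior vertices of the strip, and $f$ is the gradient (with conductances $c$, in the sense of Ohm's law \eqref{wch}) of the potential $\phi$ given by $\phi(v)=g(v)/\pi_v$ for $v\notin B_n$ and $\phi\equiv0$ on $B_n$. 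On the other hand the restriction of the \rw\ flow $w$ to $G_n^m$ is, by \eqref{wch}, the gradient of $h\restr_{G_n^m}$, and by \knl\ for $w$ in \g at $b\in B_m$ the net outflow of $w$ at $b$ inside the strip is again $w(b)$ (the upward inflow into $b$, which equals $w(b)$, is redirected downward, all downward neighbours of $b$ lying in $G_n^m$). Hence $\phi-h$ is harmonic on $G_n^m\setminus B_n$ (the source terms at $B_m$ cancelling) and equals the constant $-l_n$ on $B_n$; the maximum principle forces $\phi-h$ to be constant. Therefore $f$ coincides with $w\restr_{G_n^m}$, so the flow into $b'\in B_n$ is $\nu(b')=w(b')$, giving the $G^m_\infty$ claim; since $f$ was built from the mixture $w(\cdot)\restr_{B_m}$, the statement is literally about that mixture.

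The step I expect to be the main obstacle is entirely in the $G^m_\infty$ case: making the potential-theoretic comparison rigorous when the strip $G_n^m$ is infinite (which happens for those \g with $G_n$ infinite), where the bare maximum principle is not enough and one must argue that $\phi-h$ is a bounded function, harmonic off $B_n$, behaving correctly as one descends toward the base of the cylinder, in order to conclude it is constant; and, relatedly, the almost-sure hitting of $B_n$ by the walk on $G^m_\infty$, which is the one point where the transience input of \Lr{hto0} is genuinely needed rather than merely convenient. The \g case, by contrast, should be completely routine once \Lr{wn} and the ``$B_m$ before $B_n$'' observation are available.
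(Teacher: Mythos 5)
Your proof of the first assertion is, in substance, the paper's own: the paper applies the strong Markov property at the first visit to $B_m$ (phrased as an ``observer entering the room'' when the walk is paused there) and then invokes \Lr{wn} once for $B_m$ and once for $B_n$, exactly as you do. That half is correct and needs no further comment.

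For the second assertion you take a genuinely different route, and it is not closed. The paper's own argument (which is only sketched in the source) goes via the cancellation from \Lr{ww}: the interior $m$-subwalks of the walk on \g\ contribute zero net flow, so replacing the walk on \g\ by the walk on $G^m_\infty$ does not change the net traffic through the strip. Your alternative --- identifying the Green flow of the killed walk on $G^m_\infty$ started from $w\restr_{B_m}$ with $w\restr_{G^m_n}$ by comparing potentials --- is a legitimate strategy, and your computation of the divergences at $B_m$ (the upward inflow $w(b)$ being redirected into the strip) is right; note only that the potential must be $g(v)/\pi'_v$ with $\pi'$ the degree in $G^m_\infty$, which differs from $\pi$ on $B_m$. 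But the two points you flag are genuine gaps, not cosmetic ones. First, the walk on $G^m_\infty$ is not a censoring or time-change of the walk on \g: its transition probabilities are renormalised at every return to $B_m$, so ``it agrees with the walk on \g\ below $L_m$'' does not by itself give almost-sure hitting of $B_n$; one needs something like the observation that $h$ is a bounded supermartingale for this walk, strictly decreasing in expectation at $B_m$, combined with the fact that each excursion off $B_m$ evolves as a \g-walk until it meets $B_m$ or $B_n$. Second, the uniqueness step ($\phi-h$ constant) requires both that hitting statement and the boundedness of $\phi$; boundedness of a Green potential on an infinite strip is not automatic and you give no argument for it. Until these are supplied, the $G^m_\infty$ half is a programme rather than a proof, and the cancellation argument via \Lr{ww} that the paper gestures at avoids both issues.
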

\begin{proof}
For the first assertion (\rw\ on \G), start a \rw\ at $o$, and stop it upon visiting $B_n$. Suppose that this \rw\ is paused at its first visit to $B_m$, and at this moment an observer enters the room and then the \rw\ is resumed. What this observer experiences, is a \rw\ on \g started at $B_m$ and killed at $B_n$. Moreover, the starting distribution he experiences is $w(b),b\in B_m$ by \Lr{wn} applied on $B_m$, and the exit distribution he  
experiences coincides with that of the whole \rw, which is $w(b),b\in B_n$ again by \Lr{wn}, this time applied on $B_n$.

For the second assertion (\rw\ on $G^m_\infty$), we can use the argument of the proof of \Lr{wn} again, namely that all the traffic between vertices 
\end{proof}
} 

\note{not needed:
\begin{lemma}\label{rev}
$\pi(a)p_a(b) = \pi(b)p_b(a)$
\end{lemma}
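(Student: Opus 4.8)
The plan is first to pin down the meaning of $p_a(b)$, since the statement as written is a shorthand: it is the quantity that appears in the proof of \Lr{ww}. That is, for a fixed walk $W = (a = w_0, w_1, \dots, w_k = b)$ in $G$ from $a$ to $b$, the number $p_a(b)$ denotes the probability that \rw\ started at $a$ traverses precisely $W$ in its first $k$ steps, and $p_b(a)$ denotes the probability that \rw\ started at $b$ traverses the reversed walk $W^- = (b = w_k, \dots, w_0 = a)$ in its first $k$ steps. (The most frequently used instance is $k = 1$, i.e.\ $ab$ an edge and $p_a(b) = p_{a \to b}$, which is detailed balance for the one-step kernel.) With this reading the identity $\pi(a) p_a(b) = \pi(b) p_b(a)$ is just reversibility of the conductance random walk, and the proof is a one-line telescoping computation.

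Concretely, I would expand both sides using the Markov property and \eqref{trpr}, together with the symmetry $c(xy) = c(yx)$ of the conductances:
\[
p_a(b) = \prod_{i=0}^{k-1} p_{w_i \to w_{i+1}} = \prod_{i=0}^{k-1} \frac{c(w_i w_{i+1})}{\pi_{w_i}}, \qquad p_b(a) = \prod_{i=0}^{k-1} p_{w_{i+1} \to w_i} = \prod_{i=0}^{k-1} \frac{c(w_i w_{i+1})}{\pi_{w_{i+1}}}.
\]
Dividing, the conductance factors cancel term by term and what remains telescopes:
\[
\frac{p_a(b)}{p_b(a)} = \prod_{i=0}^{k-1} \frac{\pi_{w_{i+1}}}{\pi_{w_i}} = \frac{\pi_{w_k}}{\pi_{w_0}} = \frac{\pi(b)}{\pi(a)},
\]
and multiplying through by $\pi(a)\, p_b(a)$ yields $\pi(a) p_a(b) = \pi(b) p_b(a)$, as desired. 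If instead one wishes to read $p_a(b)$ as a genuine hitting probability (for instance $\Pr_a[\tau_b < \infty]$ after suitably restricting the state space, or $\Pr_a[\tau_b < \tau_a^+]$), the same conclusion follows by summing the per-walk identity above over the appropriate family of walks $W$, but the per-walk telescoping identity is the only real content in every version.

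I expect no genuine obstacle here: the single point requiring care is making explicit which event $p_a(b)$ denotes, after which the argument is exactly the elementary use of \eqref{trpr} and $c(xy) = c(yx)$ that is invoked without proof inside the proof of \Lr{ww}; the lemma merely records this fact in isolation.
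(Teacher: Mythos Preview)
Your proposal is correct and is essentially an elaboration of the paper's own proof, which consists of the single word ``reversibility.'' Your telescoping computation is exactly the ``straightforward to prove using \eqref{trpr}'' argument the paper alludes to in the proof of \Lr{ww}, so there is no meaningful difference in approach.
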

\begin{proof}
reversibility.
\end{proof}
}


\note{not needed; same as \Lr{wn2}:
\begin{corollary}
For $m<n\in \N$, start a \rw\ at a random vertex of $B_m$ according to the distribution $w:B_m \to [0,1]$, and stop it at its first visit to $B_n$. Then the probability distribution of the final vertex coincides with $w: B_n \to [0,1]$.
\end{corollary}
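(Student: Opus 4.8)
The plan is to deduce this from \Lr{wn} together with the strong Markov property, applied at the first time a \rw\ from $o$ reaches $B_m$. The key geometric observation is that, since $m<n$ forces $l_m>l_n$, the set $B_m$ lies strictly above $B_n$ on the cylinder; more precisely, the subgraphs $G_m$ (on $\{x\mid h(x)\ge l_m\}$) and $G^m_\infty$ (on $\{x\mid h(x)\le l_m\}$) cover $G$ and share only the vertices of $B_m$, so deleting $B_m$ separates $o$ (whose height is $1>l_m$) from $B_n$ (whose vertices have height $l_n<l_m$). Hence any walk from $o$ that ever reaches $B_n$ must visit $B_m$ strictly earlier.

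In detail, I would let $(Z^k)_{k\ge 0}$ be a \rw\ from $o$ and let $\sigma_m,\sigma_n$ be the first hitting times of $B_m$ and of $B_n$; both are finite almost surely by \eqref{Bn}, and $\sigma_m<\sigma_n$ by the separation remark above. \Lr{wn} gives that $Z^{\sigma_m}$ has law $\mu^m_o=w|_{B_m}$ (so in particular $w|_{B_m}$ is a genuine probability measure on $B_m$) and that $Z^{\sigma_n}$ has law $\mu^n_o=w|_{B_n}$. By the strong Markov property at the stopping time $\sigma_m$, conditionally on $Z^{\sigma_m}=b$ the shifted process $(Z^{\sigma_m+j})_{j\ge 0}$ is a \rw\ from $b$; hence, unconditionally, $(Z^{\sigma_m+j})_{j\ge 0}$ is distributed exactly as a \rw\ whose starting vertex is drawn from $B_m$ according to $w$. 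Since $\sigma_n>\sigma_m$, the first visit of this shifted process to $B_n$ is $Z^{\sigma_n}$, so the final vertex of the shifted process when it is stopped at $B_n$ has law $w|_{B_n}$, which is exactly the assertion.

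The one point needing a word of care is that ``the final vertex'' is well defined, i.e.\ that a \rw\ started on $B_m$ with law $w$ reaches $B_n$ almost surely; but this is built into the argument, since $\PrI{(Z^{\sigma_m+j})_{j}\text{ reaches }B_n}=\PrI{(Z^k)_{k}\text{ reaches }B_n}=1$ while $Z^{\sigma_m}\sim w|_{B_m}$, so $w|_{B_m}$-almost every vertex of $B_m$ reaches $B_n$ almost surely. I do not expect a serious obstacle: the only pitfall is the temptation to argue via the \emph{last} visit of the walk from $o$ to $B_m$ (as in \Cr{wnlast}), which is not a stopping time and would drag in an $h$-transform; routing everything through the \emph{first} hitting time of $B_m$ makes the argument immediate, with no new computation beyond \Lr{wn}.
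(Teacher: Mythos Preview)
Your argument is correct and is essentially the same as the paper's own approach: the paper's proof (given for the identically-stated \Lr{wn2}) likewise starts a \rw\ at $o$, pauses it at its first visit to $B_m$, and observes via the strong Markov property that an ``observer entering the room'' at that moment sees a \rw\ started on $B_m$ with law $w|_{B_m}$ (by \Lr{wn}) and exiting at $B_n$ with law $w|_{B_n}$ (again by \Lr{wn}). Your write-up is in fact more careful than the paper's, explicitly verifying the separation $\sigma_m<\sigma_n$ via the dummy-vertex convention and noting why the stopping time is a.s.\ finite.
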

\begin{proof}
\end{proof}
} 

\subsection{Meridians}
Having studied the nice probabilistic behaviour of parallel circles, we now turn our attention to \defi{meridians}, i.e.\ lines in $K$ with a constant width coordinate. We will prove an assertion which is, in a sense, dual to \Lr{ww}: for every meridian $M$, the expected number of particles crossing $M$ from left to right equals the  expected number of particles crossing $M$ from right to left.

Some care needs to be taken before we can make such an assertion, since if our \rw\ is currently at a vertex $x$ the span $T(x)$ of which (recall that $T(x)$ is a horizontal interval in $K$) is dissected by $M$, then we cannot say whether the particle is on the right or left of $M$. To amend this, we assume that at each step $n$ of our \rw, an additional random experiment is made to choose a random point $P^n$ in the span $T(Z^n)$ of the current position $Z^n$ uniformly among all points in  $T(Z^n)$, and all these experiments are independent from each other and $Z^i, i<n$, and we think of  $P^n$ as the position of our \rw\ at step $n$. With this assumption in mind, we can now state the following

\begin{lemma}\label{mer}
\Fe\ vertex $x$ and every meridian $M$ meeting $T(x)$, the expected number of particles crossing $M$ from left to right at $x$ equals the expected number of particles crossing $M$ from right to left at $x$.
\end{lemma}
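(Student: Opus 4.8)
The plan is to recast Lemma~\ref{mer} as the statement that the expected \emph{net} number of crossings of $M$ at $x$ (left-to-right minus right-to-left) is zero, and to identify this net count with a flux of the random-walk flow $w$; this is the assertion ``dual'' to Lemma~\ref{ww}. The key structural observation is that, once transported to the cylinder $K$, the random-walk flow is purely vertical: inside every rectangle $R_e$ it runs straight down with density $1$, because by \eqref{ar} and \eqref{wch} the width of $R_e$ equals $|w(\vec e)|$, the expected net number of traversals of $e$.

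First I would spell out the local picture at $x$. The span $T(x)$ is a single horizontal interval at height $h(x)$; the rectangles $R_e$ with $e\in E^+(x)$ sit immediately above $T(x)$ and their lower sides tile $T(x)$, while the rectangles $R_e$ with $e\in E^-(x)$ sit immediately below $T(x)$ and their upper sides also tile $T(x)$ (this is exactly the structure built in \Sr{constr}, where $w(x)=\sum_{\vec e\in E^+(x)}w(\vec e)=\sum_{\vec e\in E^-(x)}w(\vec e)$). Since each $R_e$ carries net flow equal to its width, the random-walk flow enters $T(x)$ from above with \emph{uniform} density $1$ along the whole of $T(x)$, and likewise leaves $T(x)$ downwards with density $1$. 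Hence, for the meridian $M$ at width coordinate $w_0$ lying in $T(x)$, the expected net flow entering the sub-interval $T(x)\cap\{w<w_0\}$ from above equals the length of that sub-interval, and so does the expected net flow leaving it downwards; by conservation, the net horizontal flux of the random-walk flow across $M$ inside $T(x)$ therefore vanishes.

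Second, I would connect this flux to crossing counts. By linearity of expectation over the successive visits of the walk to $x$, the difference between the expected numbers of left-to-right and right-to-left crossings of $M$ at $x$ equals $\sum_k \mathbb{E}[\sigma_k]$, where $\sigma_k$ is the signed number of crossings of the level $w_0$ by the horizontal segment the particle sweeps inside $T(x)$ during its $k$th visit --- from its entry point through the auxiliary uniform point $P^n$ to its exit point. The independence and uniformity of the $P^n$ is what guarantees that each $\mathbb{E}[\sigma_k]$ depends only on the ``flow data'' of that visit (the incoming and outgoing edges, weighted via $G(o,\cdot)$ and the transition probabilities as in \eqref{defh}), so that this sum collapses to the net flux computed above, namely $0$.

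The step I expect to be the real work is this second one: one must fix a bona fide continuous trajectory of the particle in $K$ (prescribing how it moves horizontally inside each vertex-interval and vertically through each $R_e$) so that ``signed crossings at $x$'' is unambiguous, cope with the fact that the width coordinate lives on the circle $\mathbb{R}/\mathbb{Z}$ so the trajectory may wind around the cylinder, and check that the telescoping leaves no boundary term. A tempting shortcut would be to imitate the reversibility pairing of Lemma~\ref{ww}, matching the transition ``enter $x$ along $e_1$, leave along $e_2$'' with its mirror ``enter along $e_2$, leave along $e_1$''; but the expected multiplicities of these two transitions are $h(z_1)c(e_1)c(e_2)/\pi_x$ and $h(z_2)c(e_1)c(e_2)/\pi_x$, where $z_1,z_2$ are the far endpoints of $e_1,e_2$, and these differ whenever $h(z_1)\neq h(z_2)$; so the pairing does not balance locally, and one is driven back to the global flow-conservation argument above.
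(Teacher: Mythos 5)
Correct, and essentially the paper's own argument: your ``density-one vertical flow'' picture is a repackaging of the identity $w(r)=w(s)$ for the two dual vertices flanking the edges of $x$ on the left of $M$, which is how the paper shows that the net flow into $x$ along those edges vanishes (and hence, by Kirchhoff's node law, also along the edges on the right), and your telescoping of signed crossings per visit plays exactly the role of the Sand-bucket Lemma (\Lr{bucket}). The one small bonus of your phrasing is that the uniform fractional attribution of flow across the width of each rectangle handles in one stroke the case where $M$ dissects a square $R_e$ incident with $x$, which the paper treats as a separate case left to the reader.
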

\begin{proof}
To begin with, suppose for simplicity that $M$ does not dissect any square $R_e\subset K$ associated to an edge $e$ incident with $x$. Thus locally the position of $M$ is like in the left of \fig{mers}. By the construction of $T$, this implies that for the two vertices $r,s$ of $G^*$ lying in the faces of $G$ separating the edges mapped to the left of $M$ from those mapped to its right we have $w(r)=w(s)=w(M)$, where $w(M)$ denotes the common width coordinate of the points in $M$ (\fig{mers} right). 

\showFig{mers}{The local situation at a vertex $x$ whose span $T(x)$ is dissected by a meridian $M$ in the tiling and the graph.}

Now let $P$ be the \pth{r}{s}\ in  $G^*$ comprising the set of edges $E^r$ incident with $x$ on the `left' of $M$. Recall that the values $w(r),w(s)$ where specified by choosing arbitrary paths $P_r,P_s$ from $r,s$ respectively to the reference vertex $\zeta$ of $G^*$, and adding the $w(\are)$ values along such a path. Now note that we can obtain a candidate for $P_r$ by prefixing $P_s$ by $P$. But since $w(r)=w(s)$, this implies that $\sum_{\are \in P} w(\are)=0$.  The definition of the \rw\ flow $w$ now implies that the net flow into $x$ along edges in $E^r$ is zero. Likewise, the net flow into $x$ along the `right' edges $E^l$ is zero. Our claim that the expected number of particles crossing $M$ from left to right at $x$ equals the expected number of particles crossing $M$ from right to left now follows from \knl\ \eqref{k1} and the Sand-bucket Lemma below (brown sand corresponds to particles coming to $x$ from the left; shovel them to B if they leave $x$ from the right).

It is straightforward to adapt this argument to the general case where $M$ does dissect some square $R_e$; we leave the details to the reader.
\end{proof}

\begin{lemma}[Sand-bucket Lemma]\label{bucket}
Alice has two (not necessarily equally full) buckets of sand A and B, where A contains only brown sand and B only white sand. If she puts one shovelful from A to B, mixes arbitrarily, then puts back one shovelful from B to A, then the amount (measured by volume) of white sand in A equal the amount of brown sand in B.
\end{lemma}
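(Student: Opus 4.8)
The plan is to set up notation quantifying the two transfers and to track the brown sand explicitly. Let $a$ denote the (volume) amount of sand removed in the first shovelful from $A$; since $A$ contains only brown sand, this shovelful is $a$ units of brown sand, and it is dumped into $B$. After mixing, $B$ contains some total volume $V_B$ of sand, of which exactly $a$ units are brown (the only brown sand ever to enter $B$) and the rest white. The key point is that mixing is assumed to be homogeneous, so the second shovelful taken from $B$, of some volume $b$, has brown and white sand in the same proportion as $B$ as a whole: it contains $b\cdot \frac{a}{V_B}$ units of brown sand and $b\cdot\frac{V_B-a}{V_B}$ units of white sand.

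Now I would simply compute the two quantities to be compared. The amount of white sand ending up in $A$ is precisely the white part of the second shovelful, namely $b\cdot\frac{V_B-a}{V_B}$. The amount of brown sand ending up in $B$ is the $a$ units that arrived in the first shovelful, minus the brown sand carried back in the second shovelful, i.e. $a - b\cdot\frac{a}{V_B} = a\cdot\frac{V_B-b}{V_B}$. The claim is that these are equal, i.e. $b(V_B-a) = a(V_B-b)$, which holds since both sides expand to $aV_B - ab$.

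The only genuine subtlety — and the step I would be most careful about — is the phrase ``mixes arbitrarily''. The argument above uses that after mixing the composition of $B$ is spatially uniform, so that an ``arbitrary'' second shovelful has the average composition. If instead ``arbitrarily'' were meant to allow an adversarial, non-uniform mixture (so that the second shovelful could be chosen to contain, say, only white sand), the statement would be false; thus the intended reading is that after mixing $B$ is homogeneous and the shovelful is representative. Under that reading the computation above is complete, and it is precisely this invariant — brown-in-$B$ equals white-in-$A$ — that is transported back through \Lr{mer} via \knl\ to yield the left-right flow balance across a meridian.
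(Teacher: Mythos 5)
The paper states this lemma without proof (it is a folklore puzzle), so there is nothing to compare against; but your write-up contains two genuine problems. First, the algebra: $b(V_B-a)$ expands to $bV_B-ab$, not $aV_B-ab$, so the identity $b(V_B-a)=a(V_B-b)$ you assert is \emph{false} unless $a=b$. Indeed the lemma itself is false for shovelfuls of unequal volume (take $a=0$: then $B$ contains no brown sand while $A$ receives $b$ units of white sand). The hypothesis ``one shovelful'' each way must be read as: the two transfers have equal volume. You need to say this explicitly; once $a=b$ your proportional computation does go through.

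Second, and more importantly, your closing claim that the lemma ``would be false'' under adversarial, non-uniform mixing is wrong, and it misses the actual point of the statement. With equal shovelfuls the result holds for \emph{arbitrary} mixing, by conservation of volume: after the round trip, bucket $A$ has exactly its original volume, so whatever volume $w$ of white sand now sits in $A$ has displaced an equal volume $w$ of brown sand; since brown sand is conserved and lives only in $A$ or $B$, that missing brown volume is precisely the brown sand in $B$. No homogeneity of the mixture or representativeness of the second shovelful is needed. This robustness is exactly what the application in \Lr{mer} requires: there the ``mixing'' is performed by the random walk and the second ``shovelful'' (the particles leaving $x$ to the right) is in no sense a uniform sample of the particles at $x$, so a proof resting on homogeneous mixing would not transfer. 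Replace the proportion computation by the conservation argument and drop the caveat about adversarial mixing.
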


\comment{
	\begin{lemma}\label{mer}
For every meridian $M$, the expected number of visits to the vertex set of $M$ is finite.
\end{lemma}
\begin{proof}

\end{proof}

\begin{corollary}\label{merwh}

\end{corollary}
\begin{proof}
	\end{proof}
}

\subsection{Convergence of \rw\ to the boundary} \label{convBou}

In this section we provide a proof of the almost sure convergence of the image of the trajectory or \rw\ under $T$ to a point in the boundary $\cc$ of $K$ which is maybe simpler than the proof of \cite{BeSchrST}. This proof is the only occasion in this paper where the bounded degree condition cannot be replaced by the weaker \eqref{limax}. Let $D:= \max_{x \in V} d(x)$ be the highest degree in \G.

To begin with, recall that the image of our \rw\ converges to the boundary $\cc$ of $K$ by \Lr{hto0} (we do not need the bounded degree condition for this). So it just remains to show that the width coordinates converge too.

For this, let $X$ be an interval of $\cc$, and let $M_L,M_R$ be the two meridians corresponding to the endpoints of $X$. Let $L$ be the set of vertices $x$ such that $M_L$ intersects $T(x)$, and define $R$ similarly. Note that unless $M_L,M_R$ happen to meet some vertex at an endpoint of its span ---a possibility we can exclude since there are countably many such meridians--- $L,R$ are the vertex sets of two 1-way infinite paths of \G.

We claim that the expected number of times that our \rw\ alternates between $L$ and $R$ is finite. This implies that almost surely the number of such alternations is finite, and as this holds \fe\ $X$, convergence to the boundary follows. 

It is not hard to see that the expected number of times that \rw\ from $o$ goes from $L$ to $R$ equals  $\sum_{x\in L} G(o,x) \sum_{y\in R} p_{xy}$, where as usual $ G(o,x)$ denotes the expected number of visits to $x$, and $p_{xy}$ denotes the probability that \rw\ from $x$ exits $L \cup R$ at a given vertex $y\in R$. Thus, we can write the above claim as follows: 
\labtequ{Gp}{$\sum_{x\in L,y\in L}  G(o,x)  p_{xy}< \infty$.}
We are going to prove this exploiting the relations between \rw s and electrical networks.

Recall that $ G(o,x)= h(x) \pi_x$ \eqref{defh}, and so the above sum equals\\ 
$\sum_{x\in L,y\in L} h(x) \pi_x  p_{xy}$. The quantity $\pi_x  p_{xy}$ was shown in \cite{ceff} to equal the `effective conductance' $C_{xy}$ between $x$ and $y$ when the network is finite. Effective conductance is closely related to energy (recall the physical formula $E=I^2 R = I^2 /C$), and we are going to exploit this fact using an argument similar to the proof of \Lr{bodeg}. We cannot directly apply the results of \cite{ceff} as the graph is infinite in our case, however, there is an easy way around this: we can consider an increasing sequence of finite subgraphs $H_1 \subseteq H_2 \subseteq \ldots \subseteq G$ \st\ $\bigcup H_n= G$, apply the results of \cite{ceff} to $H_n$ and take a limit.

To make this more precise, let $p^n_{xy}$ be the probability that \rw\ on $H_n$ from a vertex $x\in L\cap H_n$ exits $(L \cup R) \cap H_n$ at $y\in R\cap H_n$, and let $C^n_{xy}:=\pi_x p^n_{xy}$. It is a well known property of electrical networks, called Reyleigh's monotonicity law, that $C^n_{xy}$ is monotone increasing with $n$ \cite{LyonsBook}. Thus $\lim p^n_{xy} = \lim C^n_{xy}/\pi_x $ exists.
It is now not hard to prove that $p_{xy}\leq \lim p^n_{xy}$ using  coupling and elementary probabilistic arguments. Thus, to prove \eqref{Gp} it suffices to find a uniform upper bound for $\sum_n:= \sum_{x\in L,y\in L}C^n_{xy}$ (where we used the fact that $h$ is bounded). 
It is shown in \cite{ceff} that the above sum $\sum_n$ equals the energy $E_n$ of the harmonic function (or the electric current) on $H_n$ with boundary conditions 1 at $L\cap H_n$ and 0 at $R\cap H_n$. Thus, all we need to do is to prove that the $E_n$ are bounded. 

To achieve this, we will invoke the following well known fact: on a finite network, the harmonic function with given boundary conditions minimises energy among all function satisfying the boundary conditions. This means that it suffices in our case to find arbitrary functions $v^n$ with boundary conditions 1 at $L\cap H_n$ and 0 at $R\cap H_n$ and uniformly bounded energies $E(v^n)$. We will do so by defining a single function $v:V\to \R$ with finite energy $E(v)$ equaling 1 on $L$ and 0 on $R$, and letting $v^n$ be its restriction to $H_n$. Since $E(v^n)\leq E(v)$, the $E(v^n)$ will indeed be bounded.

To define this $v$, let $v(x)=1$ if $x\in L$, let $v(x)=o$ if $x\in R$, and let $v(x)$ be any value in $T(x)$ otherwise, where we think of $T(x)$ as an interval of $\R/Z$ by projecting it to the base of $K$. To check that $E(v)< \infty$, we will compare $E(v)$ with the energy $E(h)$ of the height function $h$ we used in the construction of the tiling. Recall that $E(h)$ equals the area of $K$, which is finite.

Now consider an edge $e=wz$, set $a:= |v(w)-v(z)|$ and recall that the contribution of $e$ to $E(v)$ is $(v(w)-v(z))^2=a^2$. Note that as $e$ joins $w$ to $z$, the construction of the tiling implies that $T(w),T(z)$ meet when projected vertically. Combined with our choice of $v$, this implies that at least one of $T(w),T(z)$ has length at least $a/2$; assume \obda\ this is $T(z)$. Now since $d(z)$ is bounded by $D$, \ti\ at least one edge $f=f(e)$ incident with $z$ \st\ the corresponding square $R_f$ in the tiling has side length at least $a/2D$. This means that the  area of $R_f$ is  at least $a^2/(2D)^2$, which is a constant times the contribution $a^2$ of $e$ to $E(v)$. Now note that each edge $f$ of \g can be considered as $f(e)$ for a bounded number of edges $e$, namely those sharing an endvertex with $f$. This easily implies that $E(v)$ is bounded by a constant times $E(h)$, and so  $E(v)$ is finite since $E(h)$ is. 

\medskip
This proves our claim that the expected number of times that our \rw\ alternates between $L$ and $R$ is finite. Applying this to a countable family of pairs $L_i,R_i$ \st\ the corresponding intervals generate the topology of $\cc$, and combinining this with the almost sure convergence of $h$ along a \rw\ trajectory, immediately yields

\begin{theorem}\label{thconv}
For \rw\ $Z^n$ on \G, the intervals $T(Z^n)$ almost surely converge to a point in $\cc$.
\end{theorem}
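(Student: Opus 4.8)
The plan is to combine three facts: $h(Z^n)\to 0$ almost surely, $w(Z^n)\to 0$, and the alternation estimate of this section applied to a suitable countable family of arcs generating the topology of $\cc$. First, by \Lr{hto0} we have $h(Z^n)\to 0$; since $h(Z^n)$ is the height coordinate of $T(Z^n)$, every subsequential limit of the sequence $(T(Z^n))$ lies on the base circle $\cc$. Second, by \Lr{bodeg} (the one place where bounded degree is genuinely needed) we have $w(Z^n)\to 0$, so the diameter of $T(Z^n)$ tends to $0$; hence, picking for each $n$ a point $a_n\in T(Z^n)$, it suffices to prove that $(a_n)$ has a unique subsequential limit in $\cc$.

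Suppose it had two, $p\neq q$, and let $A,A'$ be the two open arcs of $\cc$ joining them. Using $w(Z^n)\to 0$ again, $T(Z^n)$ lies in a fixed small neighbourhood of $p$ for infinitely many $n$ and in a fixed small neighbourhood of $q$ for infinitely many $n$, so the image of the walk makes infinitely many ``trips'' between these two neighbourhoods. Choose from the generating family two short arcs $X\subseteq A$ and $X'\subseteq A'$, each a proper subarc bounded away from $p$ and $q$, with endpoints avoiding the countably many widths of vertex images, and let $L_X,R_X$ (resp.\ $L_{X'},R_{X'}$) be the one-way infinite paths associated with the two endpoint meridians of $X$ (resp.\ $X'$) as in this section. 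A winding-number count shows that any path in $\cc$ from near $p$ to near $q$ must cross both endpoints of $X$ or both endpoints of $X'$; translating this to the walk via the discrete intermediate value principle — that the width projections of $T(Z^n)$ and $T(Z^{n+1})$ overlap whenever $Z^nZ^{n+1}\in E(G)$ — every trip visits both $L_X$ and $R_X$, or both $L_{X'}$ and $R_{X'}$. By pigeonhole, one of these two possibilities occurs for infinitely many trips; say the former. Then the sequence of visits of the walk to $L_X\cup R_X$ contains infinitely many consecutive pairs of different type, i.e.\ infinitely many alternations between $L_X$ and $R_X$ — contradicting the fact, established in this section, that for a fixed arc the number of alternations between its two associated paths is almost surely finite. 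Hence $(a_n)$ converges, and since $w(Z^n)\to 0$ the intervals $T(Z^n)$ converge to a point of $\cc$ almost surely.

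The step I expect to be the real obstacle is this final ``combining'', which is less routine than it looks. The per-arc estimate does \emph{not} say that the walk's image eventually stays on one side of the arc: a single pair $(L_X,R_X)$ cannot confine the walk, since the walk may cross one of the two endpoint meridians of $X$ infinitely often while never reaching the other path. One therefore has to use the freedom to place two members of the family strictly inside the two arcs $A,A'$ determined by a hypothetical pair of limit points, and to run the winding/intermediate-value argument that forces, for at least one of these members, infinitely many $L$–$R$ alternations. The remaining points — the winding-number count in $\cc$, the intermediate value principle, and the bookkeeping that turns ``visits both $L_X$ and $R_X$ during one trip'' into ``contains a counted $L\to R$ or $R\to L$ transition'' — are straightforward.
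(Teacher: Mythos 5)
Your argument is correct and follows the paper's route: it is exactly the combination step that the paper compresses into its final sentence (``applying this to a countable family of pairs $L_i,R_i$ \ldots immediately yields''), spelled out via $h(Z^n)\to 0$, $w(Z^n)\to 0$ (from \Lr{bodeg} plus transience), the overlap of consecutive projected intervals, and the extraction of one $L$--$R$ alternation per trip between two hypothetical limit points. Be aware, though, that the substantive content of the paper's proof of this theorem is the finite-alternation estimate itself, established by the effective-conductance/energy argument --- which is also where the bounded-degree hypothesis is irreducibly used, rather than in \Lr{bodeg} as you suggest --- so since you cite that estimate as given, your write-up completes the paper's last step rather than providing an independent proof.
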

\comment{
	\begin{lemma}\label{mer1}
Let $M(x),M(y), x\neq y$ be two meridians, and let $R,L$ be the corresponding meridian-rays. Then $C(L,R)<\infty$.
\end{lemma}
\begin{proof}
By \Lr{} it suffices to find a function $u: V(G^{LR}) \to \R$ with $E(u)<\infty$. We claim that if we let, \fe\ $x\in V(G^{LR})$, $u(x)$ be any number in $T(x)$, then $E(u)<\infty$ holds. To prove this we are going to use the fact that $E(h)<\infty$; indeed, $E(h)= \sum_{xy\in E(G)} (h(x)-h(y))^2$ equals by definition the area of our cylinder. 
\end{proof}
}

This allows us to view $\cc$ as a $G$-boundary as defined in \Sr{secSh}: define $f: \cw \to \cc$ as follows. For a walk $W$ on \g such that $T(W_n)$ converges to a point $p\in\cc$, we let $f(W)=p$; otherwise we let $f(W)$ be a fixed arbitrary point of $\cc$ (such walks form a null-set, so it does not really matter). It is easy to check that $f$ is measurable and shift-invariant. Naturally, we define the measures $\nu_z$ on $\cc$ by $\nu_z(X):= \mu_z(f^{-1}(X))$, making sure that $f$ is measure preserving. 

Combining \Tr{thconv} with \Lr{wn}, it is an easy exercise to deduce the following fact, already proved in \cite{BeSchrST}, that will be useful later

\begin{corollary} \label{Lebesgue}
$\nu_o$ is equal to Lebesgue measure on $\cc$.
\end{corollary}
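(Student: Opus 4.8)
The plan is to show that $\nu_o$ coincides with normalised Lebesgue measure $\lambda$ on a basis of arcs of $\cc$, from which equality of the two Borel measures follows by outer regularity. I would fix an arc $X=(a,b)\subseteq\cc$ (identifying $\cc$ with $\R/\Z$ of circumference $1$) whose endpoints $a,b$ avoid the two countable exceptional sets: the atoms of $\nu_o$, and the endpoints of the intervals $T(c)$ over all $c\in\bigcup_n B_n$. Such arcs form a basis for the topology of $\cc$.

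First I would record the geometry at a parallel circle $L_n$. Since the squares of the tiling cover $K$ and, by the dummy-vertex convention of \Sr{secPar} (\Lr{dummy}), no square is dissected by $L_n$, the intervals $\{T(c):c\in B_n\}$ tile $L_n$, a circle of length $1$. Writing $X_n:=\{c\in B_n:T(c)\subseteq X\}$ and $X_n':=\{c\in B_n:T(c)\subseteq\cc\sm\overline X\}$, the union of the $T(c)$ with $c\in X_n\cup X_n'$ covers all of $L_n$ except the (at most two, and for large $n$ exactly two distinct) intervals $T(c)$ whose interiors contain $a$ or $b$, and these have total length at most $2\max_{c\in B_n}w(c)$. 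By \eqref{limax} only finitely many vertices $v$ satisfy $w(v)\ge\eps$ for a given $\eps>0$, and since $h(v)>0$ for every vertex these finitely many have $h$-value bounded away from $0$; hence $B_n$ avoids all of them once $l_n$ is small enough, so $\max_{c\in B_n}w(c)\to0$. Consequently
\[
\textstyle\sum_{c\in X_n}w(c)\longrightarrow\lambda(X),\qquad \sum_{c\in X_n}w(c)+\sum_{c\in X_n'}w(c)\longrightarrow1 .
\]

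Next I would pass from these sums to $\nu_o$ using the probabilistic meaning of $w$. By \Lr{wn}, $w(c)=\mu^n_o(c)$, so $\sum_{c\in X_n}w(c)=\mu_o(A_n)$, where $A_n$ is the event that \rw\ from $o$ meets $B_n$ for the first time at a vertex $c$ with $T(c)\subseteq X$; similarly $\sum_{c\in X_n'}w(c)=\mu_o(A_n')$ for the event $A_n'$ defined with $\cc\sm\overline X$. Let $\tau_n$ be the first hitting time of $B_n$, which is a.s.\ finite by \eqref{Bn}; moreover $\tau_n\to\infty$ a.s., since a fixed time $t$ can satisfy $h(Z^t)=l_n$ for at most one $n$. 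On the a.s.\ event $\{f(W)\in X\}$, \Tr{thconv} gives $T(Z^m)\subseteq X$ for all large $m$, hence $T(Z^{\tau_n})\subseteq X$, i.e.\ $W\in A_n$, for all large $n$; therefore $\mathbb{1}_{f^{-1}(X)}\le\liminf_n\mathbb{1}_{A_n}$ pointwise, and Fatou's lemma yields $\nu_o(X)=\mu_o(f^{-1}(X))\le\liminf_n\mu_o(A_n)$. The same reasoning applied to $\cc\sm\overline X$ gives $\mu_o\big(f^{-1}(\cc\sm\overline X)\big)\le\liminf_n\mu_o(A_n')$. Since $a,b$ are not atoms of $\nu_o$ we have $\mu_o(f^{-1}(X))+\mu_o\big(f^{-1}(\cc\sm\overline X)\big)=1$, while $\mu_o(A_n)+\mu_o(A_n')\to1$ by the second displayed limit; adding the two inequalities forces $\mu_o(f^{-1}(X))=\lim_n\mu_o(A_n)$, whence $\nu_o(X)=\lim_n\sum_{c\in X_n}w(c)=\lambda(X)$. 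As the basic arcs $X$ generate the topology, $\nu_o$ and $\lambda$ agree on open sets, hence on all Borel sets, which is the assertion.

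The step I expect to require the most care is the interface between the events $A_n$, which are neither nested nor monotone, and the limit event $f^{-1}(X)$: because the walk may hit $B_n$, climb back above $L_n$, and converge to a point of $\cc$ outside $X$, one cannot identify $A_n$ with $f^{-1}(X)$ directly, and the comparison has to be run asymptotically (via $\tau_n\to\infty$ and Fatou) and then symmetrised against the complementary arc to upgrade the one-sided bound to an equality. The covering estimate and the input $w\to0$ from \eqref{limax} are routine, and the measure-theoretic wrap-up is standard.
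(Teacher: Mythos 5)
Your argument is correct and is precisely the ``easy exercise'' the paper has in mind: it combines \Tr{thconv} (to relate $\nu_o$ to the limiting exit events) with \Lr{wn} (to identify the widths with the exit distributions $\mu_o^n$), together with \eqref{limax} and \eqref{Bn} for the covering estimate. The paper gives no further details, so your writeup is a faithful, carefully symmetrised version of exactly the intended route; the only point you gloss over slightly is that the ``finitely many wide tiles'' step must also account for dummy vertices on $B_n$, whose widths are edge-widths, but these are controlled by the same finiteness of $E(h)$.
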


\section{Faithfulness of the boundary to the \shf s} \label{hard}

In this section we prove that the boundary $\cc$ of the tiling constructed above is faithful to every \shf, which we plug into \Tr{thPB} to complete the proof that $\cc$ is a realisation of the \PBv\ of our planar graph \G\ (\Tr{main}).
 
Let $s$ be such a function, fixed throughout this section.

\subsection{Some basic \rw\ lemmas}

We first collect some basic general lemmas on \rw s that will be useful later. The results of this section hold for general Markov chains, but we will only formulate them for \rw. 

Let $\ca$ be a tail event of our \rw, i.e.\ an event not depending on the first $n$ steps \fe\ $n$.
(The only kind of event we will later consider is the event $1^s$ that $s(Z_n)$ converges to 1, where $s$ is our fixed sharp harmonic function.)

For $r\in (0,1/2]$, let 
\labtequstar{
$A_r:= \{v\in V \mid \PrI{\ca} > 1-r\}$ and\\ 
$Z_r:= \{v\in V \mid \PrI{\ca} < r\}$.
}
Note that $A_r \cap Z_r=\emptyset$ \fe\ such $r$.

By \Cr{hprob}, if we let $\ca:=\cx $ then we have $A_r= \{v\in V \mid s(v)> 1-r\}$ and $Z_r= \{v\in V \mid s(v)< r\}$.

\begin{lemma}\label{Ade}
\Fe\ $\eps,\del\in (0,1/2]$, and every $v\in A_\eps$, we have $\PrII{v}{\text{visit $V \sm A_\del$}} < \eps/\del$. Similarly, \fe\ $v\in Z_\eps$, we have $\PrII{v}{\text{visit $V \sm Z_\del$}} < \eps/\del$.
\end{lemma}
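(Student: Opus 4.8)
The plan is to use the fact that $s(v) = \PrII{v}{\ca}$ is a bounded harmonic function (away from nothing here — it is genuinely harmonic and sharp by \Cr{hprob} and \Lr{sharph}), together with the optional stopping theorem applied to the martingale $s(Z^n)$. First I would fix $v\in A_\eps$, so that $s(v) > 1-\eps$. Let $\sigma$ be the stopping time ``first visit to $V\sm A_\del$''; on the event $\{\sigma < \infty\}$ we have $s(Z^\sigma) \le 1-\del$ by the definition of $A_\del$. Since $s$ is a bounded (indeed $[0,1]$-valued) harmonic function, $s(Z^n)$ is a bounded martingale, and the optional stopping theorem — more precisely, $s(v) = \ExII{v}{s(Z^{\sigma\wedge n})}$ for all $n$, then letting $n\to\infty$ using bounded convergence and the almost sure existence of $\lim_n s(Z^n)$ — gives $s(v) = \ExII{v}{s(Z^\sigma)\mathbb 1_{\{\sigma<\infty\}}} + \ExII{v}{(\lim_n s(Z^n))\mathbb 1_{\{\sigma=\infty\}}}$. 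Bounding the second term above by $\PrII{v}{\sigma=\infty} = 1 - \PrII{v}{\sigma<\infty}$ and the first term above by $(1-\del)\PrII{v}{\sigma<\infty}$, we get
\[
1-\eps < s(v) \le (1-\del)\PrII{v}{\sigma<\infty} + (1 - \PrII{v}{\sigma<\infty}) = 1 - \del\cdot\PrII{v}{\sigma<\infty},
\]
which rearranges to $\PrII{v}{\sigma<\infty} < \eps/\del$, as desired.

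An alternative, slightly more elementary route that avoids invoking optional stopping explicitly: condition on the first visit to $V\sm A_\del$ and use that $s$ is harmonic, so $s(v) = \ExII{v}{s(Z^\sigma)}$ whenever $\sigma$ is an a.s.-finite stopping time; to handle the possibility that $\sigma$ is not a.s. finite one restricts to the event $\{\sigma<\infty\}$ and uses the supermartingale inequality $s(v) \ge \ExII{v}{s(Z^\sigma)\mathbb 1_{\{\sigma<\infty\}}}$ — but since we need an upper bound on the probability, the cleanest is to combine $s(v) \le (1-\del)\PrII{v}{\sigma<\infty} + 1\cdot\PrII{v}{\sigma=\infty}$ directly, which is exactly Doob's optional stopping for the bounded submartingale/martingale $s(Z^n)$. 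The statement for $Z_\eps$ follows by the symmetric argument applied to the complementary tail event $\ca^c$ (equivalently, to $1-s$, which is also sharp harmonic by \Lr{comb}\eqref{cii}), since $v\in Z_\eps$ for $\ca$ means $v\in A_\eps$ for $\ca^c$, and $V\sm Z_\del$ for $\ca$ is $V\sm A_\del$ for $\ca^c$.

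The only mild subtlety — and the step I would be most careful about — is justifying the passage to the limit $n\to\infty$ in $s(v) = \ExII{v}{s(Z^{\sigma\wedge n})}$ when $\sigma$ may be infinite with positive probability: on $\{\sigma=\infty\}$ one needs $s(Z^{\sigma\wedge n}) = s(Z^n) \to \lim_n s(Z^n)$, which exists a.s. by the bounded martingale convergence theorem, and then dominated convergence (domination by the constant $1$) gives the claimed identity. This is entirely routine given that $s$ takes values in $[0,1]$, so I do not anticipate any real obstacle; the lemma is essentially the maximum-principle estimate ``a harmonic function close to its sup cannot reach a region where it is bounded away from the sup with large probability,'' made quantitative.
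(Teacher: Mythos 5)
Your proof is correct and is essentially the paper's argument in martingale language: the paper conditions on the first visit to $V\sm A_\del$ and uses the strong Markov property together with the tail property of $\ca$ to get $\eps > \PrII{v}{\ca^c} \geq \del\cdot\PrII{v}{\text{visit }V\sm A_\del}$, which is exactly your optional-stopping inequality for the bounded martingale $s(Z^n)$. The treatment of the second assertion via the complement is also the same as in the paper.
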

\begin{proof}
Start a \rw\ $(Z_n)$ at $v$, and consider a stopping time $\tau$ at the first visit to $V \sm A_\del$. If $\tau$ is finite, let $z=Z_\tau$. Since $z\not\in A_\del$, we have $ \PrI{\cx^c_z} \geq \del$ by the definition of $A_\del$. Thus, subject to visiting $V \sm A_\del$, the event $ \ca$ fails with probability at least $\del$ since it is a tail event. But $ \ca$ fails with probability less than $\eps$ because $v\in A_\eps$, and so $\PrII{v}{\text{visit $V \sm A_\del$}} < \eps/\del$ as claimed. 

The second assertion follows by the same arguments applied to the complement of $\ca$.
\end{proof}

\note{don't need:
\begin{lemma}\label{uppaths}
For every vertex $v\in A_r$ \ti\ an infinite path starting at $v$ and contained in $A_r$.
\end{lemma}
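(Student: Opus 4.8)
The plan is to prove the slightly stronger statement that the connected component $C$ of $v$ in the subgraph $G[A_r]$ of \g induced by $A_r$ is infinite; an infinite path starting at $v$ with all of its vertices in $A_r$ then falls out of a routine application of K\"onig's infinity lemma.

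First I would record that the function $g$ given by $g(u):=\PrII{u}{\ca}$ is harmonic on all of $V$ by \Lr{sharph}, that by definition $A_r=\{u\in V\mid g(u)>1-r\}$, and hence that $g(u)\le 1-r$ whenever $u\notin A_r$. Suppose now, for a contradiction, that $C$ is finite. Since \g is transient, \rw\ from $v$ visits the finite set $C$ only finitely often and therefore almost surely reaches $V\setminus C$; let $b$ be the first vertex outside $C$ that it visits. Because consecutive vertices of a walk are adjacent and $C$ is a \emph{whole} connected component of $G[A_r]$, every vertex adjacent to $C$ that lies in $A_r$ already lies in $C$; hence $b\notin A_r$, so $g(b)\le 1-r$. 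Applying \eqref{harmB} to the harmonic function $g$, to the set $B:=V\setminus C$ (which \rw\ from $v$ visits almost surely), and to the vertex $v$, we obtain $g(v)=\ExI{g(b)}\le 1-r$, contradicting $v\in A_r$. Thus $C$ is infinite. (The same conclusion can also be read off \Lr{Ade}: with $\eps:=1-g(v)<r$ one has $v\in A_\eps$, so $\PrII{v}{\text{visit }V\setminus A_r}<\eps/r<1$; a \rw\ that never leaves $A_r$ never leaves $C$, which is impossible for a finite $C$ by transience, so $C$ must be infinite.)

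It remains to extract a ray from $C$. The graph $C$ is infinite, connected, and locally finite since \g has bounded degrees. For each $n$ there are only finitely many simple paths of length $n$ in $C$ starting at $v$ (bounded degree), and at least one such path exists: as $C$ is infinite, connected, and locally finite, it has vertices at distance at least $n$ from $v$, and a shortest path to such a vertex has an initial segment of length $n$. Joining each length-$n$ path to its length-$(n-1)$ initial segment puts us exactly in the situation of K\"onig's infinity lemma, which yields a nested sequence of simple paths in $C$ whose union is an infinite path starting at $v$ and contained in $C\subseteq A_r$, as required.

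The only point I expect to need care is the middle step: that the first vertex $b$ at which \rw\ exits $C$ lies in $V\setminus A_r$, not merely outside $C$. This is precisely why $C$ must be taken to be a full connected component of $G[A_r]$ rather than an arbitrary connected subgraph through $v$, and it is what makes harmonicity of $g$ force $g(v)\le 1-r$; everything else is bookkeeping.
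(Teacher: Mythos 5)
Your proof is correct and follows the same structure as the paper's: show that the component of $v$ in $G[A_r]$ is infinite, then extract a ray with K\"onig's infinity lemma. The only difference is in how infiniteness of the component is established --- you stop the walk at its exit from the component and use \eqref{harmB} together with transience, whereas the paper invokes the maximum principle for harmonic functions on the finite component plus its boundary; these are two renderings of the same fact, the paper's being marginally more economical since it does not need transience.
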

\begin{proof}
It suffices to show that every component of $A_r$ is infinite, because by K\"onig's lemma \cite{diestelBook05} every connected, infinite, \lfg\ contains an infinite path. 

So suppose, to the contrary, that there is a finite component $K$ in $A_r$. By the maximum principle for harmonic functions, the maximum value of $s(x)$ among all vertices $x$ in $K\cup \partial K$ is attained by some vertex $z$ on the  boundary $\partial K$ of $K$. But then $z$ must lie in $A_r$ since $s(z)> 1-r$ by the definitions, a contradiction.
\end{proof}
}

\begin{corollary}\label{path}
If \rw\ from $v\in A_\eps$ (respectively, $v\in Z_\eps$) visits a set $W\subset V$ with probability at least $\kappa$, then \ti\ a \pth{v}{W}\ all vertices of which lie in $A_{\eps/\kappa}$ (resp.\ $Z_{\eps/\kappa}$).
\end{corollary}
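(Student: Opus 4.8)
The plan is to derive this from \Lr{Ade} by a one-line union bound. Fix $v\in A_\eps$ and suppose \rw\ from $v$ visits $W$ with probability at least $\kappa$; set $\del:=\eps/\kappa$. By \Lr{Ade}, \rw\ from $v$ visits $V\sm A_\del$ with probability less than $\eps/\del=\kappa$, so the event that \rw\ from $v$ never leaves $A_\del$ has probability greater than $1-\kappa$.

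Now the event ``\rw\ from $v$ visits $W$'' has probability at least $\kappa$, while the event ``\rw\ from $v$ stays in $A_\del$ forever'' has probability greater than $1-\kappa$. Since these two probabilities sum to more than $1$, the events meet with positive probability; in particular there is a walk $(Z_n)$ from $v$ that visits $W$ and all of whose vertices lie in $A_\del$. Letting $t$ be the first time with $Z_t\in W$, the initial segment $Z_0Z_1\cdots Z_t$ is a $v$--$W$ walk inside $A_\del=A_{\eps/\kappa}$, and (deleting closed subwalks) it contains a \pth{v}{W}\ all of whose vertices lie in $A_{\eps/\kappa}$; note $v\in A_\eps\subseteq A_\del$ since $\kappa\le 1$, so this path does start in $A_{\eps/\kappa}$. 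The case $v\in Z_\eps$ is handled identically, using the second assertion of \Lr{Ade} to bound the probability of leaving $Z_{\eps/\kappa}$.

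Since the argument is so short there is essentially no obstacle; the one point worth noting is that we invoke \Lr{Ade} with $\del=\eps/\kappa$, which may exceed $1/2$. This is harmless: the bound $\PrII{v}{\text{visit }V\sm A_\del}<\eps/\del$ uses only the defining property of $A_\del$ (a vertex outside $A_\del$ satisfies $\PrI{\ca}\le 1-\del$) and places no upper restriction on $\del$ — indeed for $\del\ge 1$ the set $V\sm A_\del$ is empty and the inequality is trivial.
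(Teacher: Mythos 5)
Your proof is correct and is exactly the argument the paper intends: its entire proof reads ``Apply \Lr{Ade} with $\del=\eps/\kappa$,'' and your union bound plus extraction of a path from the surviving walk is the natural fleshing-out of that one line. Your side remark that the restriction $\del\le 1/2$ in the statement of \Lr{Ade} is immaterial here is also accurate, since the proof of that lemma uses only the defining property of $A_\del$.
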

\begin{proof}
Apply \Lr{Ade} with $\del= \eps/\kappa$.
\end{proof}

\begin{lemma}\label{noalter}
\Fe\ $r\in (0,1/2)$, the probability that \rw\ alternates $k$ times between $A_r$ and $Z_r$ is less than $(2r)^k$.
\end{lemma}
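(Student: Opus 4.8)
The plan is to reduce the claim to a single-hop estimate: from any vertex of $A_r$ the walk reaches $Z_r$ with probability less than $2r$, and symmetrically from any vertex of $Z_r$ it reaches $A_r$ with probability less than $2r$. Granting this, the bound $(2r)^k$ follows by iteration via the strong Markov property. Let $\tau_0 \le \tau_1 \le \dots$ be the successive times at which the walk from $o$ enters whichever of $A_r, Z_r$ does not contain the vertex it most recently visited in $A_r \cup Z_r$; thus the event that the walk alternates $k$ times between $A_r$ and $Z_r$ is, up to the routine bookkeeping noted below, the event $\{\tau_k < \infty\}$. Writing $g(v)$ for the probability that the walk from $v \in A_r \cup Z_r$ reaches the set among $A_r, Z_r$ not containing $v$, the strong Markov property gives $\PrII{o}{\tau_k < \infty} = \ExII{o}{{\mathbb 1}_{\{\tau_{k-1}<\infty\}}\, g(Z_{\tau_{k-1}})}$. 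Since $g < 2r$ uniformly by the single-hop estimate, this yields $\PrII{o}{\tau_k < \infty} < 2r\, \PrII{o}{\tau_{k-1} < \infty}$, and induction on $k$ (with $\PrII{o}{\tau_0 < \infty} \le 1$) gives $\PrII{o}{\tau_k < \infty} < (2r)^k$.

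For the single-hop estimate I would invoke \Lr{Ade} rather than repeat its short proof. First note that $Z_r \subseteq V \sm A_{1/2}$: if $v \in Z_r$ then $\PrI{\ca} < r < 1/2$, so $v \notin A_{1/2}$. Hence for $v \in A_r$ the probability that the walk from $v$ visits $Z_r$ is at most the probability it visits $V \sm A_{1/2}$, which by \Lr{Ade}, applied with $\eps = r$ and $\del = 1/2$ (both in $(0,1/2]$), is less than $r/(1/2) = 2r$. The symmetric statement follows in the same way from the second half of \Lr{Ade} together with $A_r \subseteq V \sm Z_{1/2}$ (valid because $r < 1/2$, so $v \in A_r$ forces $\PrI{\ca} > 1/2$).

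The only point requiring care is the bookkeeping in the first paragraph: verifying that alternating $k$ times is faithfully encoded by $\{\tau_k < \infty\}$ for the stopping times $\tau_i$, that the $\tau_i$ are genuine stopping times, and that —since $\ca$ is a tail event— the post-$\tau_{k-1}$ portion of the walk is a fresh walk from $Z_{\tau_{k-1}}$ to which the single-hop bound may be applied, exactly as in the proof of \Lr{Ade}. I expect no real obstacle here: the whole estimate is just geometric decay reflecting that each time the walk passes from a region where $\ca$ is near-certain to a region where $\ca$ is near-impossible it loses a uniform factor $2r$ of its remaining probability mass.
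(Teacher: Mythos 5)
Your proof is correct and follows essentially the same route as the paper's: the single-hop bound is obtained exactly as in the paper by applying Lemma~\ref{Ade} with $\eps=r$, $\del=1/2$ and the inclusions $Z_r\subseteq V\sm A_{1/2}$, $A_r\subseteq V\sm Z_{1/2}$, and the iteration via the (strong) Markov property is what the paper compresses into ``we can prove by induction''. Your explicit stopping-time bookkeeping just makes that induction precise.
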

\begin{proof}
Applying \Lr{Ade} for $\eps=r$ and $\del=1/2$, we deduce that \fe\ $v\in A_r$, $\PrII{v}{\text{visit $V \sm A_{1/2}$}}<2r$. Since $r<1/2$ we have $Z_r\subseteq V \sm A_{1/2}$, and so $\PrII{v}{\text{visit $Z_r$}}<2r$. Similarly, if $v\in Z_r$, then $\PrII{v}{\text{visit $A_r$}}<2r$.


Using this, and the Markov property of \rw, we can prove by induction that the probability for \rw\ from any vertex to alternate $k$ times between $A_r$ and $Z_r$ is less than $(2r)^{k}$ as claimed. 
\end{proof}

\begin{lemma}\label{nocros}
Let $x,y\in B_m$ and $\eps,\del\leq 1/2$. There is no \pth{x}{y} in $A_\del 
\cap G- G_m$ separating a vertex $z\in B_m\cap Z_\eps$ from $C$.
\end{lemma}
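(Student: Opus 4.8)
The plan is to argue by contradiction. Suppose such a $P$ exists: an \pth{x}{y}\ with $x,y\in B_m$, all other vertices in $A_\del$ and of height strictly below $l_m$, and such that $T(P)$ together with the sub-arc $\alpha$ of $L_m$ from $T(x)$ to $T(y)$ containing $T(z)$ bounds a closed topological disc $D\subseteq\{h\le l_m\}\subset K$ with $D\cap\cc=\emptyset$. I will extract a contradiction from the fact that random walk from $z$ avoids $P$ with positive probability, yet, by its almost sure convergence to $\cc$, is eventually forced into $D$ and hence onto $P$.

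For the first half, since $z\in Z_\eps$, $P\subseteq A_\del$, and $A_\del\cap Z_\del=\emptyset$ (here $\del\le1/2$ is used), \Lr{Ade} gives that the walk from $z$ meets $P$ with probability at most $\PrII{z}{\text{visit }V\setminus Z_\del}<\eps/\del$; in particular it avoids $P$ with positive probability. For the second half, by \Tr{thconv} the trajectory $(Z^n)$ from $z$ converges almost surely to some $\xi\in\cc$, so $\xi\notin D$, and, since $h(Z^n)\to0<l_m$, the walk has a last visit to $B_m$, say at a vertex $b$. By the dummy-vertex convention the trajectory cannot cross $L_m$ along an edge (\Lr{dummy}), so from that time on it is confined to $\{h<l_m\}$ starting from $b$; if $T(b)\subseteq\alpha$, then this post-visit trajectory enters $D$ and, never re-entering $B_m$, can leave $D$ only by crossing $T(P)$, hence, by planarity, by hitting a vertex of $P$. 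So the event that the walk from $z$ never meets $P$ forces its last $B_m$-visit onto the arc of $L_m$ opposite to $z$.

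The last step, which I expect to be the real obstacle, is to rule this out. A trajectory can in principle leave the enclosed region $D$ ``over the top'', i.e.\ by crossing $\alpha$ upward into the region of $G_m$ and returning below $L_m$ on the far side, without ever touching $P$; it is this possibility that must be controlled. The plan is to run the symmetric one-sided estimate starting from a vertex on the far arc, and to bound, using \Lr{noalter} together with the fact that every passage between the two arcs of $L_m$ must either pass through $\{x,y\}\subseteq P$, or cross $T(P)$ below $L_m$, or go upward into the region of $G_m$, the number of such crossings a trajectory can make; combining these estimates with the Markov property and with $x,y\in A_\del$ should force $s(z)>\eps$, contradicting $z\in Z_\eps$.
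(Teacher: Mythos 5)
Your proof as written is incomplete, and the missing piece sits exactly where you say it does: the final paragraph is a plan, not an argument. You correctly establish (i) that the walk from $z$ avoids $P$ with positive probability, and (ii) that on this event its last visit to $B_m$ must lie on the arc of $L_m$ not containing $T(z)$, so the trajectory must at some point escape ``over the top'' through $G_m$. But you never rule this escape out; you only sketch a strategy via \Lr{noalter} and crossing counts. It is doubtful that strategy can be executed from the stated hypotheses, which say nothing whatsoever about $G$ above $L_m$: nothing prevents the walk from travelling from one arc of $L_m$ to the other entirely inside $G_m$ without ever approaching $A_\del$, in which case there are no alternations between $A_\del$ and $Z_\del$ for \Lr{noalter} to count. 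Under your reading of ``separating'' (separation only within $\{h\le l_m\}$) the lemma is a genuinely harder, and possibly false, statement.

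The paper's proof is two lines because it reads the hypothesis ``$P$ separates $z$ from $C$'' globally: every trajectory from $z$ that converges to the boundary circle must meet $P$ --- if an over-the-top route to $C$ avoiding $P$ existed, $P$ would not separate $z$ from $C$. With that reading, \rw\ from $z$ converges to $\cc$ almost surely and hence hits $P$ almost surely; since $P\subseteq A_\del$ and $\ca$ is a tail event, the strong Markov property gives $\PrII{z}{\ca}\geq 1-\del\geq 1/2$, while $z\in Z_\eps$ gives $\PrII{z}{\ca}<\eps\leq 1/2$, a contradiction (this is exactly the statement $A_\del\cap Z_\eps=\emptyset$). A further small point: your appeal to \Lr{Ade} yields $\PrII{z}{\text{visit }P}<\eps/\del$, which is vacuous whenever $\eps\geq\del$ (both are only assumed $\leq 1/2$); to get positive avoidance probability in all admissible cases you should argue directly, as above, that hitting $P$ almost surely would force $\PrII{z}{\ca}\geq 1-\del$.
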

\begin{proof}
If \ti\ such a path, then \rw\ from $z$ in $G- G_m$ would have to visit it with probability 1. But by the Markov property of $Z_n$, this would imply that $z\in A_\del$, contradicting  $z\in Z_\eps$ since $A_\del \cap Z_\eps= \emptyset$.
\end{proof}

\subsection{The main lemma}

Let $s$ be a sharp harmonic function of \G. \Fe\ $n\in N$, we let
\begin{align*}
F_n:= \{b\in B_n \mid s(b)>1/2\} \\
F'_n:= \{b\in B_n \mid s(b)<1/2\}.
\end{align*}

The following lemma is the main ingredient of the proof that $\cc$ is a realisation of the Poisson boundary.
Let $\clos{F_n}$ denote 
the projection of $T(F_n)$ to the base $\cc$ of $K$.

\begin{lemma}\label{lsydi}
$\lim_{m,n} w(\prj{F_n} \sydi \prj{F_m})=0$.
\end{lemma}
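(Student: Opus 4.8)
The plan is to show that the symmetric difference $\prj{F_n}\sydi\prj{F_m}$ has small $w$-measure by realising it as (essentially) the set of starting points on the parallel circle $B_m$ from which random walk, before reaching $B_n$, crosses between the ``sharp $1$'' region and the ``sharp $0$'' region of the graph, and then bounding the probability of such a crossing using the basic random walk lemmas of the previous subsection. First I would fix $\eps>0$ and pass to the region where $s$ is close to $0$ or $1$: let $r=r(\eps)$ be small, and set $A_r=\{v\mid s(v)>1-r\}$, $Z_r=\{v\mid s(v)<r\}$ as in the excerpt (with $\ca=1^s$, so that these agree with the level sets of $s$ by \Cr{hprob}). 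By sharpness of $s$ and \Lzol, for all $m$ large enough the $w$-measure (equivalently, by \Lr{wn}, the exit distribution $\mu^m_o$) of the set of $b\in B_m$ with $s(b)\in[r,1-r]$ is at most $\eps$; this lets us ignore the ``intermediate'' vertices on each $B_m$ and work only with $B_m\cap(A_r\cup Z_r)$.

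Next I would connect $\prj{F_n}$ to $\prj{F_m}$ geometrically. By \Lr{stretch} and \Lr{wn} the width $w(b)$, $b\in B_m$, is the exit distribution of random walk from $o$ killed at $B_m$; by \Cr{wnlast} it is also the distribution of the \emph{last} visit to $B_m$, even for walk killed at $B_n$. So consider random walk from $o$ killed at $B_n$, and look at its last visit to $B_m$: the distribution of that vertex is $w\restriction B_m$, and the distribution of the vertex of $B_n$ where it stops is $w\restriction B_n$. Thus $w(\prj{F_n}\,\triangle\,\prj{F_m})$ is controlled by the probability that this killed walk has its last $B_m$-vertex in $F_m$ but its $B_n$-endpoint in $F'_n$, plus the symmetric term; equivalently (using that the projections $\prj{F_m},\prj{F_{m}'}$ partition $\cc$ up to a $w$-nullset, and that widths match across $T(F_m)$ and $T(F_n)$ because they sum to the same total flow) the measure of the symmetric difference is bounded by the probability of a ``sign change'' between the last $B_m$-visit and the $B_n$-stop. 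Here the planarity enters through \Lr{nocros}: a path in $A_\del\cap(G-G_m)$ cannot separate a vertex of $B_m\cap Z_\eps$ from $\cc$, which is what forces the geometric projections of the ``$1$'' parts at levels $m$ and $n$ to nest correctly and prevents the symmetric difference from being large without an actual trajectory crossing region-types.

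The core estimate is then \Lr{noalter}: conditioned on the killed walk making at least one crossing between $A_r$ and $Z_r$, the probability of this is at most $2r$ (and more generally $(2r)^k$ for $k$ alternations), uniformly in the starting point. Combining: $w(\prj{F_n}\sydi\prj{F_m})\le \eps + 2r(\eps)$, which we can make as small as we like by first choosing $\eps$ small and then taking $m$ (hence $n>m$) large. I expect the main obstacle to be the bookkeeping in the second paragraph — rigorously translating the set-theoretic symmetric difference of the two planar projections $\prj{F_n}$ and $\prj{F_m}$ into a single probabilistic crossing event, which requires simultaneously using \Lr{wn}, \Cr{wnlast}, and the planar separation statement \Lr{nocros} to make sure that the projections of $F_m$ and $F_n$ really do overlap in the ``correct'' way, i.e.\ that the only $w$-mass in the symmetric difference comes from trajectories that genuinely alternate between $A_r$ and $Z_r$ and hence is killed by \Lr{noalter}. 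The other steps — the application of \Lzol\ to discard intermediate vertices, and the final $\eps$-juggling — are routine.
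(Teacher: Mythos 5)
There is a genuine gap, and it sits exactly where you suspected: the reduction of the geometric quantity $w(\prj{F_n} \sydi \prj{F_m})$ to the probability of a ``sign change'' between the last visit to $B_m$ and the exit from $B_n$. \Lr{wn} and \Cr{wnlast} give you the \emph{total} widths: $w(\prj{F_m})$ is the probability that the last $B_m$-visit lies in $F_m$, and $w(\prj{F_n})$ is the probability that the $B_n$-exit lies in $F_n$; combined with \Lr{noalter} this shows that $|w(\prj{F_n})-w(\prj{F_m})|$ is small. But the symmetric difference depends on the \emph{horizontal positions} of the two projections, not just on their measures: a priori $\prj{F_m}$ and $\prj{F_n}$ could have nearly equal measure and still be almost disjoint subsets of $\cc$, because a walk whose last $B_m$-visit is at $x$ has no obligation to exit $B_n$ below $T(x)$. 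Nothing in \Lr{wn}, \Cr{wnlast} or \Lr{nocros} controls this horizontal drift (\Lr{nocros} only says a path in $A_\del$ cannot separate a $Z_\eps$-vertex of $B_m$ from $\cc$, which is far too weak and is in fact not invoked in the paper's proof of this lemma). So the inequality $w(\prj{F_n}\sydi\prj{F_m})\le \eps + 2r(\eps)$ is asserted rather than established.

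The paper closes exactly this gap with a contradiction argument whose engine is the meridian flow-balance lemma (\Lr{mer}) together with \Lr{ww}. If the symmetric difference were $>2\lambda$, then a $\gtrsim\lambda$ fraction of the width of the ``type $A$'' part of $B_m$ would project down onto $Y_n$, forcing the final $m$-subwalk of \rw\ to \emph{go astray}, i.e.\ to start its final $m$-subwalk in a cluster $c$ of $B_m$ and exit $B_n$ outside the column $c\downarrow_n$, with probability $\gtrsim\lambda^2/4$. Each such excursion crosses a meridian $M_c$ bounding the cluster; by \Lr{mer} this net flow must be compensated by particles crossing $M_c$ in the opposite direction; a topological (planarity) argument shows that any compensating particle must cross an $A$-barrier, hence visit $A_\del$; and the alternation bound \Lr{noalter} then shows the expected compensating flow is negligible --- a contradiction. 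Your proposal assembles the right ingredients for the ``sign changes are rare'' part, but it is the flow-compensation step across meridians, not the alternation estimate, that actually pins the projections of $F_m$ and $F_n$ on top of one another, and that step is absent from your plan.
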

\begin{proof}
%

Suppose to the contrary \ti\ $\lambda>0$ \st\ for arbitrarily large $m$ \ti\ an arbitrarily large $n>m$ with $w(\prj{F_n} \sydi \prj{F_m})>2\lambda$.

Since $s$ is sharp, the weak convergence of \eqref{weakly} ---and the remark after it--- easily implies that \ta\ sequences \seq{\eps},\seq{\zeta} \st\ $\lim \eps_i=0$, $\lim \zeta_i=0$ and, defining $X_i$ by
\labtequ{defX}{$X_i:= \{b\in B_i \mid s(b)>1-\eps_i\}$,}
we have $w(F_i \sydi X_i)< \zeta_i$. Note that $X_i = F_i \cap A_{\eps_i}$ by \Cr{hprob}, where $A_r$ is defined as in the previous section for $\ca:=1^s$. We may assume that both \seq{\eps},\seq{\zeta} are decreasing. Fix such sequences, and define $Y_i$ similarly to $X_i$ by $Y_i:= \{b\in B_i \mid s(b)<\eps_i\}$. Assume that $\eps_1<1/2$, to ensure that $X_i,Y_i$ are always disjoint and that $X_i\subseteq F_i$ and $Y_i\subseteq F'_i$. We may also assume that $w(F'_i \sydi Y_i)< \zeta_i$.


Let $\chi:= \lim \mu^i_o(F_i)= \lim \mu^i_o(X_i)$, which exists by \eqref{weakly} and \eqref{harmB}. Note that $1-\chi=\lim \mu^i_o(X^c_i) = \lim \mu^i_o(Y_i)$. In other words, setting $W_i:= B_i \sm (X_i \cup Y_i)$, we have
\labtequ{W}{$\mu^i_o(W_i)$ converges to 0.}
%

\note{$X_n$ is green, $Y_n$ is red, the rest white.}

\comment{
    {\bf Observation:}
    There is no finite green or red component.\\
    Proof: greenness is a harmonic function. If there is a finite component, consider its union with its boundary. By the maximum principle the maximum is         achieved on the boundary, and so the boundary contains a green vertex, a     contradiction.
}


Given \lam\ as above, we can choose $m\in\N$ \leth\ $\eps_m,\zeta_m$ are very small compared to \lam. We will later be able to specify how much smaller we want them to be. 
In order to avoid tedious delta-epsilontics, we
introduce the notation $\Psi \simeq \Omega$, where $\Psi,\Omega$ are functions of $m$, or quantities bounded by functions of $m$, to express the assertion that $|\Psi - \Omega|$ is bounded from above by a function of $m$ that converges to 0 as $m$ goes to infinity.
Similarly, the notation $\Psi \gtrsim \Omega$ means that $\Omega- \Psi$ is bounded from above by such a function if it is positive.
We will use this notation as a means of simplification, the point being that when we introduce an error that can be bounded by some function of $\eps_m,\zeta_m$, we do not have to write down that function explicitly to know that the error is small, if we know that we can make the error as small as we wish by choosing $m$ accordingly large.

Note that $\eps_m,\zeta_m \simeq 0$ by their definition. This implies, for example, that $\eps_m/\lam \simeq 0$ since \lam\ is fixed, an observation we will use later. As another example, we have $\mu^m_o(X_m) \simeq \chi$ by the definition of
$\chi$. Similarly, we have $ \mu^m_o(W_m) \simeq 0$ by \eqref{W}, and $ \mu^m_o(Y_m) \simeq 1-\chi$. Even more, we have
\labtequ{simeq0}{$\sup_{n\geq m} \mu^n_o(X_n) \simeq \chi$,\\
$\sup_{n\geq m} \mu^n_o(W_n) \simeq 0$, and \\
$\sup_{n\geq m} \mu^n_o(Y_n) \simeq 1-\chi$.}

By \Lr{wn} we have $\mu^m_o(X_m)= \sum_{x\in X_m} w(x)$ and $\mu^n_o(X_n)= \sum_{x\in X_n} w(x)$. It thus follows from our assumption $w(\prj{X_n} \sydi \prj{X_m}) \simeq w(\prj{F_n} \sydi \prj{F_m}) >2\lambda$ and \eqref{simeq0} that the proportion of the $m\to n$ projection of $T(X_m)$ which is not in $T(X_n)$ is $\gtrsim\lam$; to make this more precise, we denote by $x\downarrow_n$, where $x$ is an element or a subset of $X_m$, the set of vertices $y\in B_n$ \st\ $T(y)$ and $T(x)$ meet when vertically projected to $\cc$.
Then we have $ w(X_m)- \sum_{y\in X_m\downarrow_n} w(y)\gtrsim \lam$, 
where we used the fact that vertex widths converge to 0 as we move down the tiling \eqref{limax}.

By \eqref{W} we have $\mu^n_o(W_n)\simeq 0$, and so $\mu^n_o(X_m\downarrow_n)\simeq \mu^n_o(X_m\downarrow_n \cap X_n) + \mu^n_o(X_m\downarrow_n \cap Y_n)$. The above fact now implies that a $\gtrsim\lam$ proportion of the $m\to n$ projection of $T(X_m)$ is in $T(Y_n)$, i.e.\ %
\labtequ{grprr}{$\mu^n_o(X_m\downarrow_n \cap Y_n) \gtrsim \lam$.}

We will now use \Lr{Ade} to show that for most vertices $b\in B_m$ \ti\ a \pth{b}{B_n} in $G^m_n$ along which the values of $s$ are close to $s(b)$. To make this more precise, define $A_r,Z_r$ as in the previous section, putting $\ca= 1^s$. Then \Lr{Ade} and the Markov property implies that, for any $\del<1/2$,
\labtequc{AdeC}{$\mu_{o}{\{\text{visit $A_{\eps_m}$ and then $V\sm A_\del$}\}}\leq \mu_{o}{\{\text{visit }A_{\eps_m}\}} \frac{\eps_m}{\del} \leq \frac{\eps_m}{\del}$.}
Now let $\del= \lam$, and call a \pth{X_m}{B_n}\ in $G^m_n \cap A_\del$ an $A$-barrier, and a \pth{Y_m}{B_n}\ in $G^m_n \cap Z_\del$ a $Z$-barrier. Recall that $X_m= F_m \cap A_{\eps_m}$ and $Y_m= F'_m \cap Z_{\eps_m}$. Call a vertex \defi{good} if it is in $X_m$ and is incident with an $A$-barrier or it is in $Y_m$ and incident with a $Z$-barrier; let $W'_m\supseteq W_m$ be the set of all vertices of $B_m$ that are not good. We claim that
\labtequ{good}{$\sum_{x\in W'_m} w(x) \simeq 0$.}
%

Indeed, recall that, by \Lr{wnlast}, $w$ coincides with the distribution of the last visit of our \rw\ to $B_m$. Now subject to this last visit being in $W'_m \cap X_m$, \rw\ will visit $V\sm A_\del$ with probability 1 because \rw\ eventually reaches $B_n$ but \ti\ no \pth{(W'_m \cap X_m)}{B_n}\ in $G^m_n \cap A_\del$, because such a path would be an $A$-barrier, from which we conclude $\mu_o{\{\text{visit $W'_m \cap X_m$ and then $V\sm A_\del$}\}}\geq \sum_{x\in W'_m \cap X_m} w(x)$. On the other hand, we have $\mu_o{\{\text{visit $X_m$ and then $V\sm A_\del$}\}}\leq
\mu_o{\{\text{visit $A_{\eps_m}$ and then $V\sm A_\del$}\}}$, but the latter probability is bounded from above by $\eps_m/\del$ by \eqref{AdeC}. Putting the two inequalities together we obtain $\sum_{x\in W'_m \cap X_m} w(x)\leq \eps_m/\del \simeq 0$. Similarly, we can prove that $\sum_{x\in W'_m \cap Y_m} w(x) \simeq 0$. As $W'_m = (W'_m \cap X_m) \cup (W'_m \cap Y_m) \cup W_m$, our claim follows.

\medskip
We are now going to partition $B_m$ into classes, called \defi{clusters}, in such a way that every cluster is incident with a barrier, and vertices in the same cluster have similar $s$-values except for a subset of small width. For this, define first a \defi{pre-cluster} to be either a maximal subset of $X_m\sm W'_m$ spanning an interval of $L_m$ in the tiling, or a maximal subset of $Y_m\sm W'_m$ spanning an interval of $B_m$. 

Note that every pre-cluster of the first type is, by definition, incident with an $A$-barrier, and every pre-cluster of the second type is incident with a $Z$-barrier.

Next, we associate each vertex in $W'_m$ (these are precisely the vertices not belonging to any pre-cluster) to the first pre-cluster to their `left' along $L_m$, and define a \defi{quasi-cluster} to be the union of a pre-cluster with the set of vertices associated with it. Note that each quasi-cluster is incident with at least one $A$-barrier or $Z$-barrier, but not with both. This allows us to classify quasi-clusters into two types, called type $A$ and type $Z$ accordingly. Finally, we define a \defi{cluster} to be the union of a maximal set of quasi-clusters that spans an interval of $L_m$ and is not incident with both an $A$-barrier and a $Z$-barrier. 

Note that clusters can be still  classified into types as above, as each cluster is incident with either an $A$-barrier or a $Z$-barrier. Let $\Xi_m$ denote the set of clusters of type $A$ and  $\Upsilon_m$ the set of clusters of type $Z$. By construction, no two clusters of the same type are adjacent as intervals of $L_m$.

A cluster of type $A$ may in principle have more of its width in $Y_m$ than $X_m$, but such clusters are rare when measured by $w$. For a typical cluster, its type agrees with the kind of most of its width:
\labtequ{typecol}{$w(\bigcup \Xi_m \sm X_m) + w(\bigcup \Upsilon_m \sm Y_m) \simeq 0$,}
because $w(W'_m) \simeq 0$ and $(\bigcup \Xi_m \sm X_m) \cup (\bigcup \Upsilon_m \sm Y_m) \subseteq W'_m$.

Combined with \eqref{grprr}, this implies that the proportion of the $m\to n$ projection of the clusters of type $A$ that is in $Y_n$ is at least  $\gtrsim\lam$, i.e.\
\labtequ{grprr2}{$\sum_{x\in \bigcup \Xi_m} \mu^n_o(x\downarrow_n \cap Y_n) \gtrsim \lam$.}

Let
$$\Xi'_m:= \{ c\in \Xi_m \mid \frac{w(c\downarrow_n \cap Y_n)}{w(c)}\geq \lam/2\},$$
and note that $\Xi'_m$ consists of those clusters of type $A$ that have a relatively large contribution to \eqref{grprr2}.
We claim that $\Xi'_m$ is a relatively large set, namely, $w(\bigcup \Xi'_m) \gtrsim \frac{\lam(1-\chi/2)}{1-\lam/2}$. For this, let $P= w(\bigcup \Xi_m\downarrow_n \cap Y_n)$,
and recall that $P \gtrsim \lam$ by \eqref{grprr2}. Note that $w(\bigcup \Xi'_m) + \lam/2 (\chi - w(\Xi'_m))\gtrsim P$ by the definition of $\Xi'_m$.
Since $P \gtrsim \lam$, an easy calculation proves our claim $w(\Xi'_m)\gtrsim \frac{\lam(1-\chi/2)}{1-\lam/2}$. Since  $\chi\leq 1$, this implies that
\labtequ{wXp}{$w(\Xi'_m)\gtrsim\lam/2$.}
%


%
%

Since the proportion of the $m\to n$ projection of the clusters of type $A$ that is in $Y_n$ is at least  $\gtrsim\lam$ \eqref{grprr2}, the results of \Sr{secPar} (\Lrs{wnlast} and \ref{wn}) imply that with positive probability our \rw\ has its last visit to $B_m$ in some cluster $c$ of type $A$ and then diverts to hit $B_n$ outside $c\downarrow_n$. Let us make this more precise. We say that \rw\ from $o$ \defi{goes astray}, if its final $m$-subwalk (this terminology was introduced in the proof of \Lr{stretch}) starts at a cluster $c$ and does not hit $B_n$ in $c\downarrow_n$. We claim that
\labtequ{astray}{Conditioning \rw\ from $o$ to have its last visit to $B_m$ in $\Xi'_m$, it goes astray with probability $\gtrsim \lam/2$.}
To prove this, recall that the distribution of the last vertex on $B_m$ visited (which coincides with the first vertex of the  final $m$-subwalk) coincides with $w$ by \Lr{wnlast}, and that the hitting distribution to $B_n$ coincides with $w$ (\Lr{wn}). Thus the definition of $\Xi'_m$ implies that if our claim \eqref{astray} is wrong, then the probability that the final $m$-subwalk starts in $\Xi'_m$ and exits in $Y_m$ is not $\simeq 0$. This probability however is bounded from above by $\mu_o{\{\text{visit $X_m$ and then $Y_m$}\}}$, which by \eqref{AdeC} is at most $\frac{\eps_m}{1/2} \simeq 0$, which leads to a contradiction. This proves  \eqref{astray}.

\medskip
The fact that \rw\ goes astray with positive probability combined with \Lr{Ade} implies the existence of certain \pths{L_m}{L_n}\ with vertices in $A_\del$ that our \rw\ can follow, which can be proved by arguments similar to those that we used above to prove the existence of barriers. To make this more precise, let $\Xi''_m$ be the set of those clusters $c$ in $\Xi'_m$ \st\ \ti\ a path $\Puz$  in the strip $G_n^m$ with vertices in $A_{\del}$ from $c$ to the set $B_n\sm c\downarrow_n$ of vertices not lying below $c$ in the tiling (\fig{puz}). We claim that most of $\Xi'_m$ is in $\Xi''_m$:
\showFig{puz}{The path \Puz.}
\labtequ{Puz}{$w(\bigcup \Xi'_m \sm \bigcup \Xi''_m)\simeq 0$.}
To see this, note that subject to going astray having started the final $m$-subwalk in $\bigcup \Xi'_m \sm \bigcup \Xi''_m$, our \rw\ visits $V\sm A_{\del}$ with probability 1, and so this happens with probability $\simeq 0$ by \eqref{AdeC}.

Note that every such path $\Puz$ is an $A$-barrier by definition. But what is special about it, is that it must cross one of the meridians of $c$, in fact the part of the meridian contained in the strip $G^m_n$, which part we denote by $M_c$. A more careful examination of the proof of the existence of $\Puz$ shows that we can strengthen \eqref{astray} to demand that \rw\ goes astray crossing $M_c$.




\medskip
Since $w(\Xi''_m)\gtrsim\lam/2$ by \eqref{wXp} and \eqref{Puz}, we have just proved  that the final $m$-subwalk of \rw\ from $o$ crosses an $M_c$ as above with probability $\gtrsim \lam^2/4$.

These crossings account for a flow of particles from one side of these meridians to the other. By \Lr{mer}, this flow has to be compensated for by other particles, crossing the meridian in the other direction. Such particles have to cross not only $M_c$, but also $\Puz$ by a topological argument; in particular, they have to visit $A_{\del}$. To see that such particles do indeed have to cross $\Puz$, suppose for a moment that \fig{puz} is not showing part of a cylinder, but part of a rectangular strip, i.e. we have cut the cylinder along a perpendicular line not visible in the figure. Let $P$ be a path that starts in $B_m$, finishes in $B_n$, and crosses $M_c$ from left to right. For our purposes, it is enough to consider such a $P$ whose width coordinates grow monotonely, for if a particle crosses $M_c$ in one direction and then back in the other direction then it has no compensating contribution to the aforementioned flow. Thus $P$ starts on $B_m$ on the left of $M_c$, and finishes on $B_n$ on the right of $M_c$. But then $\Puz$  separates the two endpoints of $P$, and so $P$ has to cross $\Puz$. 

Now since we are on a cylinder rather than a strip, the words `left,right' make less sense. However, for this argument it makes little difference; for example, we can consider a covering of the cylinder $K$ by an infinite strip, lift $P, M_c$ and $\Puz$ to the covering, and then apply the above argument.

Recall that only the initial and final $m$-subwalk can have a non-zero contribution to the expected number of traversals  of an edge by \Lr{ww}, and so only the final $m$-subwalk can have a compensating contribution for the above flow since $M_c$ lies below $L_m$. We can bound the probability with which the particle starts its final $m$-subwalk in $Y_m$ and displays this behaviour, i.e.\ crosses an $M_c$ and hence visits $A_{\del}$, applying \eqref{AdeC} with the roles of $A$ and $Z$ reversed: this probability is at most $\eps_m/\del \simeq 0$.

Note however, that although  a  particle visiting $Y_m$ crosses an $M_c$ with low probability, it might be that when it does so it crosses the $M_c$ of several clusters $c$, and so its expected contribution in compensating the above flow becomes considerable. But we can use \Lr{noalter} to bring this possibility under control as follows.
Again, only the   final $m$-subwalk is relevant. Now suppose that the final $m$-subwalk of \rw\ from $o$ crosses the $M_c$ of several $c\in \Xi''_m$, say $k$ of them. Then, as no two clusters of type $A$ can be adjacent as intervals of $L_m$, and as every cluster has a barrier of its own type, such a subwalk must, by a geometric argument, visit at least $k$ $A$-barriers and $k-1$ $Z$-barriers alternatingly. Thus, such a subwalk alternates between  $A_{\del}$ and $Z_{\del}$ $k$ times. The probability of this event though can be bounded using \Lr{noalter}: using an argument similar to the one we used to derive \eqref{AdeC} from \Lr{Ade}, we see that the expected number of alternating visits between $A_{\del}$ and $Z_{\del}$ is less than $2$.
Combining this fact with the above bound for the probability of even visiting a single $A$-barrier  from $Y_m$, and using the Markov property of \rw, we deduce that the expected number of $A$-barriers crossed by final $m$-subwalks starting in $Y_m$, which equals the compensating contribution of $Y_m$ to the above flow, is $\simeq 0$.

Now consider the case where the particle starts it final $m$-subwalk at $x\in X_m$. If $x$ happens to be in $\Xi''_m$, then it cannot cross the $A$-barrier of its own cluster $c$ in a compensating manner, i.e.\ coming into the vertical strip under $c$ from outside, unless it travels all the way around the cylinder. 
In any case, in order for such a subwalk to have a compensating contribution, it has to cross the $A$-barrier of some other cluster $z\in \Xi''_m$. But then it would have to visit the $Z$-barriers of all the cluster of type $Z$ lying between $c$ and $z$, and there is at least one such cluster by construction. 
Using the same arguments as in the previous case, we deduce that the expected compensating contribution of $X_m$ to the above flow is also less than $\simeq 0$.

Finally, \rw\ starts its final $m$-subwalk in $W_m$ with probability $\simeq 0$, and again its  compensating contribution is $\simeq 0$ too by similar arguments.

This proves that our alleged flow crossing the $M_c$ cannot be compensated for in accordance with \Lr{mer}, yielding a contradiction that completes our proof.
\end{proof}

Note that we can replace $F_m,F_n$ with $X_m,X_n$, defined as above \eqref{defX}, in the assertion of \Lr{lsydi} because $w(F_i \sydi X_i)< \zeta_i$ and $\lim \zeta_i=0$, which means that
 \labtequ{wFX}{$\lim w(F_i \sydi X_i) =0$.}

\medskip

An \defi{$(m,n)$-impurity} is a vertex in $F_n$ that 
has an $m$-predecessor in $F'_m$, or a vertex in $F'_n$ that 
has an $m$-predecessor in $F_m$, where an $m$-predecessor of $y$ is a vertex $x\in B_m$ \st\ $y\in x\downarrow_n$. Let $M^m_n\subseteq B_n$ be the set of $m,n$-impurities. Using this terminology, \Lr{lsydi} can be reformulated as follows

\labtequ{impur}{$\lim_{m,n} w(M^m_n)=0$.}

\subsection{Faithfulness} \label{subfaith}

Using \eqref{impur} we can now show that $\cc$ is faithful to $\cs$.

\begin{lemma} \label{lemFai}
\Fe\ \shf\ $s$ there is a measurable subset $X$ of $\cc$ \st\ $\mu_o(1^s_o \sydi f^{-1}(X))=0$.
\end{lemma}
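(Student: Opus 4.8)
The plan is to first extract the boundary set $X$, and then check that $f^{-1}(X)$ matches the event $1^s_o$. For the extraction: \Lr{lsydi} says that the projections $\prj{F_n}$ form a Cauchy sequence for the pseudometric $(P,Q)\mapsto w(P\sydi Q)$ on measurable subsets of $\cc$; since the width measure $w$ on $\cc$ is just Lebesgue measure, which equals $\nu_o$ by \Cr{Lebesgue}, the indicators $\mathbb 1_{\prj{F_n}}$ are Cauchy in $L^1(\cc,\nu_o)$ and hence converge in $L^1$ to $\mathbb 1_X$ for some measurable $X\subseteq\cc$; thus $\nu_o(\prj{F_n}\sydi X)\to 0$. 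Since $f$ is measure preserving this gives $\mu_o(f^{-1}(\prj{F_n})\sydi f^{-1}(X))\to 0$, so by the triangle inequality for the pseudometric $(P,Q)\mapsto\mu_o(P\sydi Q)$ on events it will suffice to prove $\mu_o(1^s_o\sydi f^{-1}(\prj{F_n}))\to 0$: as the left side of the asserted identity does not depend on $n$, a zero limit here forces $\mu_o(1^s_o\sydi f^{-1}(X))=0$.

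To obtain $\mu_o(1^s_o\sydi f^{-1}(\prj{F_n}))\to 0$ I would introduce the last vertex $\tilde b_n$ of $B_n$ visited by \rw\ $Z^k$ from $o$; this is $\mu_o$-a.s.\ well defined, since \rw\ visits $B_n$ a.s.\ by \eqref{Bn} and by \Lr{hto0} eventually stays below height $l_n$, and moreover the last-visit time $\ell_n\to\infty$ a.s.\ (once $l_n<\min_{k\le N}h(Z^k)$ the walk has not met $B_n$ by time $N$). Then split into \textbf{(A)} $\mu_o(1^s_o\sydi\{\tilde b_n\in F_n\})\to 0$ and \textbf{(B)} $\mu_o(\{\tilde b_n\in F_n\}\sydi f^{-1}(\prj{F_n}))\to 0$. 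Part (A) is routine: on $1^s_o$ we have $s(Z^k)\to 1$, so $s(Z^k)>1/2$ for all large $k$, and since $\ell_n\to\infty$ this forces $\tilde b_n\in F_n$ for all large $n$; hence $\mathbb 1_{1^s_o\setminus\{\tilde b_n\in F_n\}}\to 0$ a.s., so $\mu_o(1^s_o\setminus\{\tilde b_n\in F_n\})\to 0$ by dominated convergence, and symmetrically $\mu_o(0^s_o\setminus\{\tilde b_n\in F'_n\})\to 0$. By \Cr{wnlast} the law of $\tilde b_n$ is $w$, so $\mu_o(\tilde b_n\in F_n)=w(F_n)=\mu^n_o(F_n)$ (using \Lr{wn}), which tends to $\chi=\mu_o(1^s_o)$ by the definition of $\chi$ in the proof of \Lr{lsydi} together with \Cr{hprob}; likewise $\mu_o(\tilde b_n\in F'_n)\to 1-\chi=\mu_o(0^s_o)$. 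Since $\{1^s_o,0^s_o\}$ partitions path space up to a null set and $\{F_n,F'_n\}$ partitions $B_n$ up to a $w$-null set, these facts combine to give (A).

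Part (B) is where the real work lies, and I expect it to be the main obstacle. Both events there carry $\mu_o$-mass $w(F_n)\to\chi$ (the first by \Cr{wnlast}, the second because $\mu_o(f^{-1}(\prj{F_n}))=\nu_o(\prj{F_n})=w(F_n)$), so (B) is equivalent to $\mu_o(\{\tilde b_n\in F_n\}\cap f^{-1}(\prj{F'_n}))\to 0$: one must rule out that \rw's last visit to $B_n$ lies in $F_n$ while its $T$-limit lies under $F'_n$. The difficulty is that after its last visit to $B_n$ the walk still drifts sideways inside the strip $G^n_\infty$, so $\tilde b_n\in F_n$ alone does not pin down the limiting column. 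The plan is to compare two levels $m<n$, chosen (using \eqref{impur}, \eqref{W} and \eqref{wFX}) so that $w(M^m_n)$, $w(W_m)$, $w(W_n)$ and $\eps_m$ are all small. By \Lr{Ade} with $\ca=1^s_o$, once the final $m$-subwalk starts in $X_m=F_m\cap A_{\eps_m}$ it stays in $A_{1/2}=\{s>1/2\}$ with probability at least $1-2\eps_m$, so all its later visits to $B_n$ — in particular $\tilde b_n$ — lie in $F_n$, consistently with the last-visit distributions of \Cr{wnlast} and \Lr{wn}. It then remains to see that the $T$-limit $\xi$ of such a walk also lies in $\prj{F_n}$ outside a set of small $w$-measure; for this I would invoke the meridian balance of \Lr{mer} together with the bound on the expected number of meridian crossings from the proof of \Tr{thconv}, which one can use to show that the residual sideways drift after reaching $B_n$ tends to $0$ in $w$-measure. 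Since $\prj{T(\tilde b_n)}$ is a column of width at most $\sup_{b\in B_n}w(b)\to 0$ (by \eqref{limax}), this residual-drift bound, fed into \eqref{impur} (which bounds how often the $F$-status of a level-$n$ column disagrees with that of its $m$-predecessors), should bound $\mu_o(\{\tilde b_n\in F_n\}\cap f^{-1}(\prj{F'_n}))$ by a quantity of order $w(M^m_n)+w(W_m)+w(W_n)+\eps_m$, which goes to $0$ as $m,n\to\infty$.

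Combining (A), (B) and $\mu_o(f^{-1}(\prj{F_n})\sydi f^{-1}(X))\to 0$ by the triangle inequality yields $\mu_o(1^s_o\sydi f^{-1}(X))=0$, as required; applied to every \shf\ $s$ this is the faithfulness input needed to invoke \Tr{thPB} and thereby finish the proof of \Tr{main}.
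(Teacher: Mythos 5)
Your reduction to showing $\mu_o(1^s_o\sydi f^{-1}(\prj{F_n}))\to 0$, and your part (A), are sound: the $L^1$-limit construction of $X$ is equivalent to the paper's choice $X=\liminf_i\prj{F_i}$, and the argument that on $1^s_o$ the last visit $\tilde b_n$ to $B_n$ eventually lies in $F_n$ (because $s(Z^k)\to 1$ and the last-visit times $\ell_n\to\infty$), combined with $\mu_o(\tilde b_n\in F_n)=w(F_n)\to\chi=\mu_o(1^s_o)$ via \Cr{wnlast} and \Lr{wn}, is correct and arguably cleaner than the corresponding step in the paper. The genuine gap is part (B), and it is not a technical loose end but the entire content of the lemma: given (A) and $\nu_o(\prj{F_n}\sydi X)\to 0$, statement (B) is \emph{equivalent} to the conclusion $\mu_o(1^s_o\sydi f^{-1}(X))=0$, so the decomposition gains nothing unless (B) is actually proved, and the mechanism you propose does not establish it. It is true that $\prj{T(\tilde b_n)}\to\xi$ almost surely (since $T(Z^k)\to\xi$ and $\ell_n\to\infty$), but this ``residual drift $\to 0$'' is useless on its own: $\prj{F_n}$ and $\prj{F'_n}$ may interleave at scales comparable to or finer than both the drift and the column widths, so $\xi$ lying close to a column of $F_n$ does not place $\xi$ in $\prj{F_n}$; and \eqref{impur} controls the aggregate $w$-measure of disagreement between two levels, not the behaviour of the particular trajectory after time $\ell_n$. \Lr{mer} and the meridian-crossing estimate are used only inside the proofs of \Lr{lsydi} and \Tr{thconv}; they do not yield a single-pair $(m,n)$ bound of the kind you want.

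What the paper does instead is global in the levels. After thinning the sequence $(l_n)$ so that $\sum_n w(M^{n-1}_n)<\infty$, giving \eqref{impurvan}, and using \Lr{Ade} together with \eqref{wFX} to get the switching bound \eqref{switchvan}, the Borel--Cantelli lemma shows that almost every trajectory eventually exits \emph{every} level consistently in the $F_n$'s (or consistently in the $F'_n$'s) and never at an impurity; it is this ``eventually for all levels'' statement that pins the limit point into $X$ (respectively $Y$) up to an error of at most $w(\prj{F_m}\sydi X)\to 0$. Your single-level comparison of $\{\tilde b_n\in F_n\}$ with $f^{-1}(\prj{F_n})$ cannot see that structure, which is exactly what is needed to pass from the discrete exit data to the limit point $\xi$. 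To repair the proposal you would have to replace (B) by the paper's summability-plus-Borel--Cantelli argument over a sparse subsequence of levels.
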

\begin{proof}
Let $\eps_i= 2^{-i}$ and apply \Lr{lsydi}, or \eqref{impur}, to obtain a sequence $(n_{\eps_i})_\iin$ \st\ \fe\ $m,n\geq n_{\eps_i}$ we have $w(M^m_n)<2^{-i}$. Since our choice of the levels $l_n$ was arbitrary, we may assume \obda\ that $n_{\eps_i}=i$, and we will make this assumption to avoid subscripts in our notation. Our choice of $(\eps_i)$ has the effect that $\sum_m:= \sum_{n>m} w(M^m_n) < \infty$ \fe\ $m$, and $\lim_m \sum_m = 0$. In other words, we have
\labtequ{impurvan}{$\lim_m \mu_o(\{\text{exit some $n>m$ at an $(n-1,n)$-impurity}\})=0$.} 
Recall the definition of $F_i$ from the previous section, and let $\clos{F_i}$ denote 
the projection of $T(F_i)$ to the base $\cc$ of $K$. Let $X$ be the set of points eventually in $\overline{F_{i}}$, i.e.\ 
$$X:= \{x\in \cc \mid \text{\ti\ $k$ \st\ $x\in \overline{F_{i}}$ \fe\ $i>k$}\},$$
and define $Y$ similarly replacing $F_i$ by $F'_i$. 

Note that $X=  \liminf_i \overline{F_{i}} := \bigcup_j \bigcap_{i\geq j} \overline{F_{i}}$. 
Thus $X$ is a borel subset of $C$, hence $\nu$-measurable. We will prove that $X$ has the desired property\\ 
$\mu_o(1^s_o \sydi f^{-1}(X))=0$.

Similarly to \eqref{impurvan} it follows from the summability of $(\eps_i)$, and the definition of  $M^m_n$, that $\lim_m \sum_{n>m} w(\clos{F_n} \sydi \clos{F_{n-1}}) =0$. This implies
\labtequ{sydiX}{$\lim_m  w(\clos{F_m} \sydi X) =0$ and $\lim_m  w(\clos{F'_m} \sydi Y) =0$}
because any point $p\in \clos{F_m} \sydi X$ lies in $\clos{F_n} \sydi \clos{F_{n-1}}$ for some $n> m$ by the definition of $X$.

Next, we claim that \ti\ a sequence $(n_i)_\iin$ of levels \st\
\labtequ{switchvan}{$\sum_i \mu_o(\{\text{visit $F_{n_i}$ and then visit $F'_{n_j}$ for some $j>i$} \})<\infty$.} 
For this, recall that visiting $F_{n_i}$ is almost the same event as visiting $X_{n_i}$ by \eqref{wFX} (recall the definition \eqref{defX} of $X_n$); similarly, visiting $F'_{n_j}$ is almost the same event as visiting $Y_{n_j}$, and it is unlikely to visit any $Y_{n_j}$ after having visited $X_{n_i}$ by \Lr{Ade}. We can thus use the above technique of choosing a sequence $(n_i)_\iin$ corresponding to the sequence $\eps_i$ where the $\eps_i$ are used as upper bounds of probabilities of the unlikely events we want to avoid, to achieve \eqref{switchvan}.

Again, we may assume \obda\ that $n_i=i$ as we did above.

Using \eqref{impurvan}, \eqref{switchvan} and the \BCl, we deduce that our \rw\ almost surely exits finitely many $B_n$ at an $(n-1,n)$-impurity and switches between the $F_i$ and the $F'_i$ finitely often. This means that 
almost surely \ti\ some $m$ \st\ its trajectory converges to a point in $\clos{F_m}$ if it has hit $B_m$ in $F_m$ and to a point in $\clos{F'_m}$ if it has hit $B_m$ in $F'_m$, having exited each $B_n, n>m$ at $F_n$ in the former case and at $F'_n$ in the latter. Note that conditioning on the first case we have almost sure occurrence of the event $1^s$, while conditioning on the latter we have $0^s$ almost surely. 

Moreover, the probability with which the limit of the \rw\ trajectory lies in $\clos{F_m}$ but not in $X$ (and similarly with $F'_m$ and $Y$) is at most $w(\clos{F_m} \sydi X)$ because $w$ coincides with the exit measure by \Lr{wn}. Since, clearly, we can choose $m$ arbitrarily large, this probability can be made arbitrarily small by \eqref{sydiX}. Putting these remarks together, we obtain $\mu_o(1^s_o \sydi f^{-1}(X))=0$ as desired.
\end{proof}

The following is essentially a reformulation of \Cr{cortop}.

\begin{corollary}
Let $G$ be a locally finite transient graph and let $\mu_o$ be the distribution of \rw\ from $o\in V$. Let $(\cn,\co)$ be a topological space endowed with  Borel measures $(\nu_z)_{z\in V}$ and a `projection' $\tau: V \to \co$ so that the following are satisfied
\begin{enumerate}
\item there is a Borel-measurable function $\tau^*: \cw \to \cn$ mapping almost every $(Z^n)\in \cw$ to $\lim_n \tau(Z^n)$ ---in particualr, for \rw\ $Z^n$ on $G$, $\tau(Z^n)$ converges in $\cn$ $\mu$-almost surely--- and $\mu_z(\{\tau^*((Z^n)) \in O\}) = \nu_z(O)$.
\item \ti\ a sequence \seq{G} of subgraphs of \g with $\bigcup G_n= G$ \st\ \rw\ visits all boundaries $B_n$ of $G_n$ almost surely (this is automatically satisfied when $G_n$ is finite), and $\mu_o^n(b) = \nu_o \circ \tau(b)$ \fe\ $b\in B_n$, where $\mu_o^n$ denotes the exit distribution of $G_n$ for \rw\ from $o$.
\item \fe\ \shf\ $s$ and every $z$, we have\\ 
$\lim_{m,n} \nu_z(\tau(F_m) \sydi \tau(F_n)) =0$, where $F_i:= \{b\in B_n \mid s(b)>1/2\}$.
\end{enumerate}
Then $\cn$ is a realisation of the Poisson boundary of $\G$.
\end{corollary}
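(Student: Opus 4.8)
The plan is to deduce the statement from \Tr{thPB}. Equipping $\cn$ with the measures $(\nu_z)_{z\in V}$ of hypothesis (i) and with the map $f:=\tau^*:\cw\to\cn$, I would first check that $\cn$ is an $M$-boundary in the sense of \Sr{secSh}: by (i) the map $f=\tau^*$ is Borel-measurable, it is shift-invariant because $\lim_n\tau(Z^n)$ is unchanged when the first step of the walk is deleted, and it is measure preserving by the very definition $\nu_z(O):=\mu_z(f^{-1}(O))$; equation \eqref{harmnu} then follows as in \Sr{secSh}. (Local finiteness of \G\ enters only to keep path space and the sets $\tau(F_i)$ below well behaved.) With this in place, by \Tr{thPB} it suffices to prove that $\cn$ is faithful to every \shf.

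Fix a \shf\ $s$ and put, as in \Sr{hard}, $F_i:=\{b\in B_i\mid s(b)>1/2\}$ and $F'_i:=\{b\in B_i\mid s(b)<1/2\}$. I claim the proof of \Lr{lemFai} goes through with only cosmetic changes once one substitutes hypothesis (ii) for \Lr{wn} (``the exit distribution of $G_n$ from $o$ agrees with the width measure on $B_n$''), hypothesis (iii) for \Lr{lsydi} --- equivalently for its reformulation \eqref{impur} as a bound on impurities --- and hypothesis (i) for the convergence-to-the-boundary statement \Tr{thconv}; note that the purely Markov-chain lemmas of \Sr{hard} on which \Lr{lemFai} relies (\Lrs{Ade}, \ref{noalter} and \Cr{path}) are stated for arbitrary chains and remain available verbatim. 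Concretely: by (iii) one passes to a subsequence of levels, reindexed so that $B_i$ is the $i$-th, with $\sum_i\nu\big(\tau(F_i)\sydi\tau(F_{i+1})\big)<\infty$; one sets $X:=\liminf_i\tau(F_i)=\bigcup_j\bigcap_{i\ge j}\tau(F_i)$, which is Borel since each $\tau(F_i)=\bigcup_{b\in F_i}\tau(b)$ is open, and defines $Y$ analogously from the $F'_i$. Then, exactly as in \Lr{lemFai}: (a) $\lim_m\nu(\tau(F_m)\sydi X)=0$ and $\lim_m\nu(\tau(F'_m)\sydi Y)=0$; (b) using \Lr{Ade} with $\ca:=1^s$ together with the fact that $\mu^n_o(F_n)$ concentrates, as $n\to\infty$, on $A_{\eps_n}=\{v\mid s(v)>1-\eps_n\}$ for a suitable null sequence $(\eps_n)$ --- which follows from $\ExI{s(Z^{t_n})}=s(o)$ and sharpness of $s$ --- the walk from $o$ almost surely exits all but finitely many $G_n$ at $F_n$, or all but finitely many at $F'_n$; (c) in the former case $1^s$ occurs almost surely, and $f((Z^n))=\lim_n\tau(Z^n)$ lies in $X$ up to an event of probability $\le\nu(\tau(F_m)\sydi X)\to0$, and symmetrically in the latter case with $0^s$ and $Y$. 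Feeding (b) and (c) into the \BCl\ gives $\mu_o(1^s\sydi f^{-1}(X))=0$, just as in \Lr{lemFai}. To pass from $o$ to an arbitrary $z$, observe that $N:=1^s\sydi f^{-1}(X)$ is shift-invariant, so $z\mapsto\mu_z(N)$ is a non-negative harmonic function (\Lr{sharph}) vanishing at $o$; since $\mu_o(N)=\ExI{\mu_{Z^n}(N)}$ for every $n$ and the walk from $o$ reaches every vertex with positive probability by irreducibility, $\mu_z(N)=0$ for all $z$. Hence $\cn$ is faithful to $s$, and since $s$ was arbitrary, \Tr{thPB} completes the proof.

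The step I expect to be the main obstacle is (c): transporting the genuinely tiling-geometric part of \Lr{lemFai} --- where the ``lies below'' relation $\downarrow_n$ and the width-decay \eqref{limax} are used to trap the limit point of a ``committed'' trajectory inside $\liminf_i\tau(F_i)$ --- into the abstract setting, where the same conclusion must be extracted solely from the layered hypothesis (ii), the symmetric-difference bound (iii), and the convergence hypothesis (i). A minor additional point is to discard the countably many exceptional points of $\cn$ (the analogue of the $h$-singular points of \Sr{secPr}) at which $\tau(F_i)$ might cover the limit point only in its closure rather than in its interior; these form a $\nu_z$-null set for every $z$, so they do not affect the conclusion.
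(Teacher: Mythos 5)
Your proposal is correct and follows essentially the same route as the paper: the paper's own proof consists precisely of the remark that $\cn$ with $(\nu_z)$ and $f=\tau^*$ satisfies the $G$-boundary axioms, that the proof of \Lr{lemFai} applies almost verbatim with $\cc$ replaced by $\cn$, $w$ by $\nu$, and the impurity sets $M^m_n$ redefined as $\tau(F_m)\sydi\tau(F_n)$ (your dictionary (ii)$\leftrightarrow$\Lr{wn}, (iii)$\leftrightarrow$\eqref{impur}, (i)$\leftrightarrow$\Tr{thconv} is exactly this), and that \Tr{thPB} then concludes. Your additional care about the exceptional boundary points and about upgrading faithfulness from $o$ to all $z$ fills in details the paper leaves implicit, but does not change the argument.
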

\begin{proof}
Note that $\cn$ endowed with the measures $(\nu_z)_{z\in V}$ satisfies the axioms for being a $G$-boundary.
The proof of \Lr{lemFai} applies almost verbatim, replacing $\cc$ with $\cn$ and $w$ with $\nu$, the role of $\tau$ being played by the `horizontal' position in the tiling. The set $M^m_n$ of $(m,n)$-impurities can be defined in general as $\tau(F_m) \sydi \tau(F_n)$. This yields that $\cn$ is faithful to every \shf, and so by \Tr{thPB} it is a realisation of the Poisson boundary.
\end{proof}

In condition (iii) we can replace $F_m,F_n$ by sets $X_m,X_n$ defined as above \eqref{defX} because of \eqref{wFX}, and it will probably be helpful to do so in applications.

\medskip
Recall that the only cases in which we used the bounded degree assumption on \g were in proving that the widths $w(x)$ converge to zero \eqref{limax} and that the image of \rw\ almost surely converges to $\cc$ (\Tr{thconv}). Thus we can strengthen \Tr{main} as follows
\begin{corollary}\label{unbou}
If \g is a plane, uniquely absorbing graph \st\ \\ $\limsup_n \max_{b\in B_n} \mu_o^n(b)=0$, and the image of \rw\ in the square tiling of \g almost surely converges to the boundary $\cc$, then $\cc$ is a realisation of the Poisson boundary of \G.
\end{corollary}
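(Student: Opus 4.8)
The proof will proceed by checking that the proof of \Tr{main} carries over under the weaker hypotheses, so I first recall its structure. That proof breaks into four parts: (a) the construction of the tiling $T$ of the cylinder $K$ in \Sr{constr}, which needs only that $G$ be plane, transient and uniquely absorbing; (b) the packaging of the base $\cc$ of $K$ as a $G$-boundary, i.e.\ the construction of a measurable, shift-invariant, measure-preserving map $f\colon \cw \to \cc$ in \Sr{convBou}, which needs the almost sure convergence of $T(Z^n)$ to a point of $\cc$; (c) the proof that $\cc$ is faithful to every \shf, carried out in \Sr{hard} and culminating in \Lr{lemFai}, whose engine is the main \Lr{lsydi}; and (d) an application of \Tr{thPB}, which turns a $G$-boundary faithful to every \shf\ into a realisation of the \PBv. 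The plan is to see that each of (a)--(d) is available under the hypotheses of the corollary.

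Part (a) is immediate, as its hypotheses are exactly those assumed here. For part (b), the almost sure convergence of $T(Z^n)$ to $\cc$ is now part of the hypothesis, and granting it, the definitions of \Sr{convBou} apply verbatim: define $f$ by sending a walk whose $T$-image converges to a point $p\in\cc$ to that point $p$ (and the remaining null set of walks to a fixed point), observe that $f$ is measurable and shift-invariant, and set $\nu_z(X):=\mu_z(f^{-1}(X))$, making $\cc$ a $G$-boundary; none of this uses the bounded degree assumption beyond the convergence statement itself (and, in particular, \Cr{Lebesgue} also survives).

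For part (c) I must check that the single place where the bounded degree assumption was previously invoked in \Sr{hard} --- namely through \eqref{limax} in the proof of \Lr{lsydi} --- can be replaced by the hypothesis $\limsup_n \max_{b\in B_n}\mu^n_o(b)=0$. By \Lr{wn} we have $w(b)=\mu^n_o(b)$ for every $b\in B_n$, so this hypothesis says precisely that the widths of the vertices of $B_n$ tend to $0$ uniformly as $n\to\infty$, i.e.\ that ``vertex widths converge to $0$ as we move down the tiling'' --- and that is the form in which \eqref{limax} is actually used in the proof of \Lr{lsydi}, where every vertex width that occurs is a width of a vertex lying on one of the levels $B_m$ or $B_n$. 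Every other ingredient of \Sr{hard} (\Lr{Ade}, \Lr{noalter}, \Cr{path}, \Lr{nocros}, \Lr{mer}, the Sand-bucket Lemma, and the cluster/barrier combinatorics) and of \Sr{secPr} is valid for an arbitrary transient graph. Hence \Lr{lsydi}, and therefore \eqref{impur} and \Lr{lemFai}, remain valid, so $\cc$ is faithful to every \shf; part (d) is \Tr{thPB}, which holds for any irreducible Markov chain, and it yields that $\cc$ is a realisation of the \PBv\ of \rw\ on $G$.

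The step needing care is exactly the verification claimed in part (c): one must confirm that in the accounting arguments inside the proof of \Lr{lsydi}, where one passes between the $w$-width of a projected subregion of $\cc$ and the sum of the widths $w(y)$ over the vertices $y\in B_n$ lying below it, the resulting error terms are controlled purely by $\max_{b\in B_n}w(b)$ --- together with the already established smallness of $\mu^n_o(W_n)$ and of the impurity masses --- and never require controlling widths of interior vertices or otherwise invoking the full ``for every enumeration'' strength of \eqref{limax}. One should likewise re-scan \Srs{geo}{hard} and \Sr{secPr} to make sure no other consequence of bounded degree slipped in; per the remark preceding the statement, the only other such use is the convergence $T(Z^n)\to\cc$ of \Tr{thconv}, which is now assumed outright. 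So the obstacle here is not a new idea but a careful bookkeeping check that the two stated uses of bounded degree are indeed the only ones.
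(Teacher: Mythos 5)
Your proposal is correct and follows essentially the same route as the paper, which justifies the corollary by observing that the bounded degree assumption enters only through \eqref{limax} (used in the proof of \Lr{lsydi} and translated into the $\limsup$ condition on exit distributions via \Lr{wn}) and through the convergence statement of \Tr{thconv}, both of which are now hypotheses. Your additional bookkeeping remarks in part (c) merely spell out what the paper asserts implicitly.
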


Here, $B_n$ is the boundary of the $n$th member of a sequence $G_n$ as in our proof, and we applied \Lr{wn} to rewrite \eqref{limax} in terms of the exit distributions $\mu_o^n$.

\comment{ 
	\sectio{Every measurable subset of $C$ equals some $f(\cx_o)$ up to a null-set} \label{secConv}
\labtequ{}{}
In this section we show that \cn\ has the second condition required by \Tr{thPB}, namely the one in the title.

It suffices to prove the assertion for a closed interval $X$ of $C$, for these sets generate the \sig-algebra of $C$ under complementation and countable unions, and the family of \stoco\ sequences is closed under these operations (\Lr{comb}).

So given such $X\subseteq C$, let $X_n:= \{x\in B_n \mid \overline{x} \subset C\}$ be the set of vertices of $B_n$ lying above $X$. Since \rw\ converges to a point of $C$ almost surely, and since it converges to one of the endpoints of $X$ with probability 0, it eventualy stays either inside or outside the strip above $X$ for ever. This means that either it hits almost all $X_n$ or almost none. Thus the \seq{X}\ is a stable sequence, and by \Lr{lstab} \stoco, and the event of converging to a point in $X$ coincides up to a null-set with $f(\cx_o)$.
}

\section{The Northshield circle}
Northshield \cite{NorCir} endowed every plane graph \g satisfying properties \ref{Ni}--\ref{Nl} below with compactifications $|G|_\sim$ and $|G|_{\cong}$, and essentially conjectured that the boundary $\cbg$ of the former, which he showed to be homeomorphic to a circle, is a realisation of the Poisson boundary, while the boundary \cgbg\ of the latter is homeomorphic to the Martin boundary of \G. 
 We will confirm the former conjecture by showing that it is canonically homeomorphic to the boundary of the square tiling of \G.  

Every (plane) graph \g considered by Northshield has the following properties
\begin{enumerate}
\item \label{Ni} \g is \defi{\vapf}, i.e.\ no point of $\R^2$ is an accumulation point of vertices of \G\footnote{Northshield requires instead that every cycle in \g surrounds only finitely many vertices; that this is equivalent to \ref{Ni} as proved in \cite[Lemma~7.1]{thoPla}.};
\item \label{Nii} \g is \defi{non-amenable}, i.e.\ $\inf_K \frac{|\partial K|}{ |K|}>0$, where $K$ ranges over all finite vertex-sets of \G, and its boundary $\partial K$ is the set of vertices outside $K$ sending an edge to $K$;
\item \label{Nl} \g has bounded degrees.
\end{enumerate}

We now repeat the definition of $|G|_\sim$ and $|G|_{\cong}$ from \cite{NorCir}.
The corresponding boundaries are defined as equivalence classes of geodesics in \g with a common starting vertex $o$. Let $\Gamma= \Gamma_o$ denote the set of these geodesics. For any two elements $R,L\in \Gamma$ with finite intersection, $\{R \cup L\}$ divides the plane, and hence \g, into two unbounded open domains by \ref{Ni}, and we will denote by $(R,L)$ and $(L,R)$ the intersection of each of these domains with \G, where we use a fixed orientation of the plane to decide which is which. Note that $(R,L)$ and $(L,R)$ can be empty; this occurs exactly when $R \cup L$ bounds an infinite face of \G. We use  $[R,L]$ to denote the subgraph of \g spanned by $R \cup (R,L) \cup L$.

Given $R,L\in \Gamma$, we write $R \sim L$ whenever $(R,L)$ or $(L,R)$ is a recurrent graph. We write $R \cong L$ whenever there exists a \defi{transient} 1-way infinite path $Q$ such that both $R \cap Q$ and $L \cap Q$ contain infinitely many vertices; here, we call $Q$ \defi{transient}, if \rw\ on \g \as\ visits $Q$ finitely often. We let $\cbg$ denote the set of equivalence classes of $\sim$ and let $\cgbg$ denote the set of equivalence classes of $\cong$. Then we let $|G|_\sim = \g \cup \cbg$ and $|G|_{\cong}= \g \cup \cgbg$. It remains to specify the topologies of these spaces, and this is done in a natural way using the embedding of \g in the plane as follows.

Let $\simeq$ denote any of the relations $\sim$ or $\cong$. Given an equivalence class $[Q] \in \Gamma/ \simeq$, we will, with a slight abuse of notation, write  $[Q] \in (R,L)$ to denote that either (a) $| R \cap L | = \infty$, or (b) $R,L \not\in [Q]$ and \fe\ $W\in [Q]$, a cofinal subgeodesic of $W$ is contained in $(R,L)$. Using this notation, we specify a base for the topology of $|G|_\simeq$ by declaring all sets of the form $(R,L) \sm S$ to be open, where $R,L\in \Gamma$ and $S \subset \G$ is finite.

By the following theorem, \cbg\ is homeomorphic to $\S^1$; we will call it the \defi{Northshield circle} of \G.

\begin{conjecture}[{\cite[CONJECTURE 2.6]{NorCir}}] \label{Ncon}
Let \g be an \vapf\ plane, non-amenable graph with bounded vertex degrees. Then every bounded harmonic function is the integral of a bounded measurable function \wrt\ harmonic measure on the Northshield circle.
\end{conjecture}

\begin{theorem}[{\cite[THEOREMS 2.2, 2.3 and 2.7]{NorCir}}] \label{TN}
Let \g be a \vapf\ plane non-amenable graph with bounded vertex degrees. Then 
\begin{enumerate}
 \item \label{Ncirc} \cbg\ is homeomorphic to a circle;
 \item \label{conv2} \rw\ on \g converges to a point in \cbg\ almost surely, and
 \item \label{Rtrans} for every geodesic $R$ and $v\in V(G)$, we have $\mu_v(\sgl{[R]})=0$.
\end{enumerate}

\end{theorem}

\comment{
	\begin{theorem}[{\cite[Theorem 6.1]{BeSchrST}}] \label{convC}
Let $z_0=o,z_1,\ldots$ be a simple \rw\ on a transient, 1-ended, plane graph \G\  with bounded vertex degrees. Then the images $\tau(z_i)\in T$ converge to a point $x$ in $\ccc(G)$ almost surely, and the distribution of $x$ coinsides with Lebesgue measure on $\ccc$. 
	\end{theorem}
	
\begin{theorem}[{\cite[THEOREM 2.3]{NorCir}}] \label{conv2}
Let \g be a \vapf\ plane non-amenable graph with bounded vertex degrees. Then \rw\ on \g converges to a point in \cbg\ almost surely.
\end{theorem}

\begin{theorem}[{\cite[THEOREM 2.7]{NorCir}}] \label{Rtrans}
Let \g be a \vapf\ plane non-amenable graph with bounded vertex degrees. For every geodesic $R$ and $v\in V(G)$, we have $\mu_v(\sgl{[R]})=0$.
	\end{theorem}
}

\begin{lemma}[{\cite[LEMMA 2.1]{NorCir}}] \label{between}
Let \g be as in \Tr{TN} and $R\not\sim L$ be geodesics in $\Gam$. Then $(R,L)$ contains a geodesic $Q$ with $Q\not\sim R, L$.
\end{lemma}

\begin{lemma}[{\cite[LEMMA 4.2]{NorCir}}] \label{cong}
Let \g be as in \Tr{TN} and $R, L \in \Gam$. If $R\cong L$ then $R\sim L$.
\end{lemma}

\subsection{Proof of Northshield's conjecture}

In this section we prove the following result, which provides a positive answer to \Cnr{Ncon}.
\begin{theorem}\label{thm}
Let \g be an \vapf\ plane, non-amenable graph with bounded vertex degrees. Then the Northshield circle \cbg\  is canonically homeomorphic to the boundary \cc\ of the square tiling of \G. Thus the former is a realisation of the Poisson boundary.
\end{theorem} 
\begin{proof}
To begin with, note that if \g is \vapf\ then it is uniquely absorbing by the definitions. Moreover, it is transient by \Tr{TN}, and so the square tiling and its boundary are well-defined.

We will define a homeomorphism $h: \partial_\sim G \to \ccc$ directly, by showing that the image of any geodesic $R$ of \g in the tiling converges to a point of \ccc\ depending only on the limit of $R$ in $\partial_\sim G$ and not on $R$ itself.

Suppose, to the contrary, that for some geodesic $R$ with edges $e_1, e_2, \ldots$, there are distinct meridians $M,N$ of $T$ each met by $\tau(e_i)$ for infinitely many $i$. Then, since the image of \rw\ $(z_i)$ converges to a point between $M$ and $N$ with positive probability by \Cr{Lebesgue},  planarity implis that $z_i$ must meet $R$ infinitely often with positive probability. This however contradicts \Tr{TN} \ref{Rtrans}.

By a similar argument, if $R,L$ are geodesics \st\ $R \sim L$, then their images under $\tau$ on $T$ converge to the same point of \ccc, for \rw\ converges to a point in \cbg\ by \Tr{TN} \ref{conv2}.

This allows us to define a map $h: \partial_\sim G \to \ccc$ by mapping any $x\in \partial_\sim G$ to the unique point $p$ of \ccc\ such that for every geodesic $x_1,x_2,\ldots$ in $x$, we have $\tau(x_i) \to p$.

To show that $h$ is a homeomorphism, let us first check that it is injective. For this, suppose there are geodesics $R,L$ \st\ $h([R])= h([L])= p\in \ccc$, and so $\tau(R_i) \to p$ and $\tau(L_i) \to p$ by the definition of $h$. Then \Cr{Lebesgue} implies that 
\labtequ{outs}{\rw\ eventually avoids $(R,L)$ \as.}
We would like to show that $R\sim L$. If this is not the case, then using \Lr{between} we can find a geodesics $Q$ in $(R,L)$ and a geodesic $P$ in $(Q,L)$ such that all $R,Q,P,L$ are pairwise non-equivalent. 

We distinguish two cases. Suppose first that \ta\ infinitely many, pairwise disjoint paths $B_1,B_2,\ldots$ joining $R$ to $Q$ in $[R,L]$. Then combining these paths with parts of $R$ and $Q$, we can construct an infinite path $X$ interesecting each of $R,Q$ infinitely often. By \eqref{outs}, this path proves that $R \cong Q$. This however contradicts \Lr{cong}. 

Thus this case cannot occur, and so no such family of paths can exist. This means that \ti\ a finite set $S_1$ of vertices separating $R$ from $Q$ in $[R,L]$ \cite[p. 202]{diestelBook05}. Similarly, \ti\ a finite set $S_2$ of vertices separating $P$ from $L$ in $[R,L]$. This implies that $S_1 \cup S_2$ is a finite set separating $R$ and $L$ from $[Q,P]$ in $[R,L]$; hence  $S_1 \cup S_2$ separates $[Q,P]$ in \G. By our choice of $Q,P$, the subgraph $[Q,P]$ is transient. This easily implies that \rw\ in \g stays eventually in $[Q,P]$ with positive probability, for if it visits a vertex in $[Q,P] \sm (S_1 \cup S_2)$, it might never visit $(S_1 \cup S_2)$ again. But this contradicts \eqref{outs} because $[Q,P]\subseteq [R,L]$.

This proves that $h$ is injective. By its definition, $h$ preserves the natural cyclic ordering induced on the circles \cbg\ and \ccc\ by the embedding of \G, and so $h$ is continuous. 
It easily follows by topological arguments that $h$ is onto, hence a homeomorphism.
\end{proof}

\begin{lemma}\label{simcong}	
Let \g be a \vapf\ plane non-amenable graph with bounded vertex degrees and no infinite faces. Then \fe\  $R,L\in \Gamma$, we have $R \sim L$ \iff\ $R \cong L$, and so \cbg\ coincides with \cgbg.
\end{lemma}
\begin{proof}
By \Lr{cong} $R \cong L$ implies $R \sim L$, so it remains to show that if $R \sim L$ then $R \cong L$.
For this, note that since \g has no infinite faces, $[R,L]$ contains an infinite set of pairwise disjoint $R$-$L$~paths. Combining such paths with parts of $R$ and $L$ we hence obtain a path $X$ in $[R,L]$ visiting each of $R,L$ infinitely often. We will show that $X$ is transient, which means that $R \cong L$.

We distinguish three cases. Suppose first that with positive probability, \rw\ on \g stays eventually inside $[R,L]$. Then the subgraph $[R,L]$ is transient, contradicting our assumption that $R \sim L$. 

The second case is where \rw\ visits both $[R,L]$ and its complement infinitely often with positive probability. This implies, using planarity,  that at least one of $R,L$ is visited infinitely often with positive probability. But this contradicts \Tr{TN} \ref{Rtrans}.

The only remaining case is where \rw\ visits $[R,L]$ only finitely often \as. But then our path $X$ is transient indeed, proving that $R \cong L$ as desired.
\end{proof}

\subsection{Hyperbolic graphs} \label{secHyp}

In this section we prove \Cr{cor} and disprove another conjecture of \cite{NorCir} saying that graphs as in \Tr{TN} are necessarily hyperbolic. We assume that the reader is familiar with the basic notions about hyperbolic graphs and their boundaries; otherwise \cite{GhHaSur,Short} can be consulted.

One of the homeomorphisms in \Cr{cor} was already known: that the Martin boundary coincides with the hyperbolic boundary in a consequence of

\begin{theorem}[{\cite{AncNeg,Anc}}] \label{ancona}
Let \g be an infinite, Gromov-hyperbolic,  non-amenable, graph (bounded degree?). Then the hyperbolic boundary of \g is canonically homeomorphic to the Martin boundary. 
\end{theorem}

Moreover, it is proved in \cite[LEMMA 5.1]{NorCir} that if \g is hyperbolic in addition to satisfying the above conditions, then the hyperbolic boundary \hybg\ coincides with \cgbg\ as a set; more precisely, two geodesics are equivalent \wrt\ to $\cong$ \iff\ they converge to the same point of \hybg. 
It follows that the corresponding topologies also coincide in that case:

\begin{lemma}\label{hypcong}	
Let \g be a hyperbolic, \vapf\ plane, non-amenable graph with bounded vertex degrees. Then \hybg\ is canonically homeomorphic to \cgbg.
\end{lemma}
\begin{proof}
As already mentioned, there is a canonical bijection between \hybg\ and \cgbg, so it just remains to show that their topologies coincide. The topology of $\cgbg$ is, by definition, generated by the `intervals' of the form $\cls{(R,L)} \cap \cgbg, R,L \in \Gamma$. We will show that the same sets generate the topology of \hybg. 

For this, suppose that the geodesics $L,M,R\in \Gamma$ are non-equivalent and that $M\in [L,R]$. 
Let $\pi_{LM},  \pi_{MR}$ and $\pi_{RL}$ be two-way infinite geodesics joining the corresponding limit points in \hybg. We may assume without loss of generality that both $\pi_{LM}$  and $ \pi_{MR}$ lie in the region of the plane bounded by $\pi_{RL}$ that does not contain $o$, for otherwise we can replace any subpath crossing to the other region by a subpath of $\pi_{RL}$ of the same length. For $x,y\in \hybg$, define the 
\defi{Gromov product} $x \hat{} y$ as the shortest length of a path from $o$ to any $x$-$y$~geodesic. It is known \cite{Short} that for every hyperbolic graph \g 
the topology of \hybg\ can be generated by the sets of the form $O_{x,r}:= \{y\in \hybg \mid x \hat{} y > r \}$, where $x$ ranges in all of \hybg\ and $r$ in $\R_{>0}$. Now since $\pi_{LM}$  and $ \pi_{MR}$ lie in the region of the plane bounded by $\pi_{RL}$ that does not contain $o$, it follows that $d_h(\partial L, \partial M) , d_h(\partial M, \partial R)\leq  d_h(\partial L, \partial R)$. This easily implies our claim that the intervals $\cls{(R,L)} \cap \hybg, R,L \in \Gamma$ generate the topology of \hybg.
\end{proof}

Combining \Lrs{hypcong} and \ref{simcong} with Theorems~\ref{ancona} and \ref{thm} yields \Cr{cor}. For this, we use the fact that an 1-ended plane graph is always \vapf\ \cite{ThomassenRichter}. 

We now adress the question of whether all conditions in the assertion of \Cr{cor} and \Tr{thm} are needed. 

To see that having no infinite faces is essential in \Cr{cor}, let \g be a graph satisfying all conditions, pick a 2-way infinite geodesic $R$, and let $G'$ be obtained from $G$ by deleting one of the two sectors into which $R$ divides $G$. Then the hyperbolic and Martin boundary becomes an arc, while the Northshield circle and the square tiling boundary remain circles.

Northshield conjectured that hyperbolicity is implied by the other conditions in \Cr{cor}:

\begin{conjecture}[{\cite[CONJECTURE 5.3]{NorCir}}] \label{Ncon2}
Let \g be a plane, \vapf, non-amenable graph with bounded vertex degrees. Then \g is hyperbolic.
\end{conjecture}

\fig{nohyp} shows a 1-ended counterexample without infinite faces. It is constructed recursively as follows. Start with a triangle $L_0$ embedded in $\R^2$. Draw a new edge incident to each vertex of $L_0$ inside the unbounded region of $\R^2$, and join the new vertices with a cycle $L_1$. Then perform infinitely many steps of the following type: at step $i=1,2,\ldots$, subdivide each edge of $L_i$ by placing $i$ new vertices on it, draw a new edge incident to each vertex (new or old) of $L_i$ inside the unbounded region of $\R^2$, and join their other ends with a cycle $L_{i+1}$.

\showFig{nohyp}{A counterexample to \Cnr{Ncon2}. The righmost and leftmost geodesics are to be thought as being identified. The cycles $L_i$ are drawn horizontally.}

We claim that $\inf_K \frac{|\partial K|}{ |K|}\geq 1$ for this graph, which means that it is non-amenable. This can be proved by induction on the number of vertices in a smallest hypothetical finite set $K$ \st\ $\frac{|\partial K|}{ |K|}< 1$: consider the highest level $L_n$ of the graph containing a vertex of $K$. If $L_n \subset K$, then letting $K' = K \sm L_n$ we easily have $\frac{|\partial K'|}{ |K'|}<\frac{|\partial K|}{ |K|}$, contradicting the minimality of $K$. Otherwise, there is a vertex $x\in L_n\cap K$ having a neighbour $y\in L_n \sm K$. Thus $x$ contributes at least 2 to $|\partial K|$, and again  letting $K' = K -x$ we have $\frac{|\partial K'|}{ |K'|}<\frac{|\partial K|}{ |K|}$ because adding the same amount to both the numerator and denumerator to a positive fraction that is smaller than 1 increases the value of the fraction.

The fact that this graph is not hyperbolic can be seen by noticing that every face boundary forms a geodesic triangle ---considering the endpoints and the midpoint of its upper path as the corners of the triangle--- and that these triangles are not thin because there are arbitrarily long face boundaries.

\note{
The fact that this graph is not hyperbolic follows from the following fact
\begin{proposition}
Let \g be a plane hyperbolic non-amenable graph. Then \g has bounded co-degree.
\end{proposition}
\begin{proof}

\end{proof}
}

\medskip

To see that non-amenability is not implied by the other  conditions, let \g be a graph satisfying them all, choose an infinite sequence \seq{v} of vertices of \G, and attach a path of length $n$ to each $v_n$; then the new graph is amenable, but retains all other properties.

It is not clear whether non-amenability or the bounded degree condition is necessary to make \Cr{cor} true, but the following example, due to Itai Benjamini (private communication), shows that we cannot drop both conditions simultaneously: start with the infinite binary tree $T$ embedded in $\R^2$. \Fe\ $n\in \N$, connect the vertices at distance $n$ from a root $o$ with a cycle $C_n$ visiting them all without violating planarity. Then for every $n$ and every edge $e$ of $C_n$, insert $2^{2^n}$ paths of length 2 joining the endpoints of $e$. It is straightforward to check that the resulting graph is hyperbolic. The  paths of length 2  inserted in the last step ensure that this graph has the Liouville property, that is, all bounded harmonic functions are constant, e.g.\ using the methods of \cite{BeKoNon} or \cite{intersection}. It follows from the symmetry of this graph and the Liouville property that its Martin boundary is trivial. The other four boundaries are still homeomorphic to a circle. This example also shows that in \Cr{unbou} we cannot drop the condition that the image of random walk \as\ converges to $\cc$.

This discussion motivates the following problems

\begin{problem} \label{Pram}
Is every planar graph with the Liouville property amenable?
\end{problem}
Note that for Cayley graphs this is true even without the planarity condition \cite{KaVeRan}, while for general graphs it is false even assuming bounded degrees \cite{BeKoNon}.

\begin{problem}\label{Prhyp}
Is there a planar, Gromov-hyperbolic graph with bounded degrees and the Liouville property?
\end{problem}

\acknowledgement{I am grateful to Omer Angel, Ori Gurel-Gurevich and Asaf Nachmias for pointing out some shortcommings in an earlier draft of this paper.}

\bibliographystyle{plain}
\bibliography{../collective}

\end{document}